\newtheorem{theorem}{Theorem}[section]
\newtheorem{prop}[theorem]{Proposition}
\newtheorem{cor}[theorem]{Corollary}
\newtheorem{lemma}[theorem]{Lemma}
\theoremstyle{definition}
\newtheorem{example}[theorem]{Example}
\newtheorem{Definition}[theorem]{Definition}
\theoremstyle{remark}
\newtheorem{remark}[theorem]{Remark}
\newcommand{\overbar}[1]{\mkern 1.5mu\overline{\mkern-1.5mu#1\mkern-1.5mu}\mkern 1.5mu}
\def\cok{{\mathrm{cok\,}}}
\def\id{{\mathrm{id}}}
\begin{document}
\title{\vspace{-2cm} A characterisation for the category of Hilbert spaces}
\author{Stephen Lack \\
  steve.lack@mq.edu.au \\
  Shay Tobin \\
  shay.tobin@mq.edu.au \\
  \\
  School of Mathematical and Physical Sciences\\
  Macquarie University NSW 2109 \\
  Australia}
\maketitle
\noindent

\begin{abstract}
The categories of real and of complex Hilbert spaces with bounded linear maps have received purely categorical characterisations by Chris Heunen and Andre Kornell. These characterisations are achieved through Solèr's theorem, a result which shows that certain orthomodularity conditions on a Hermitian space over an involutive division ring result in a Hilbert space with the division ring being either the reals, complexes or quarternions. The characterisation by Heunen and Kornell makes use of a monoidal structure, which in turn excludes the category of quarternionic Hilbert spaces. We provide an alternative characterisation without the assumption of monoidal structure on the category. This new approach not only gives a new characterisation of the categories of real and of complex Hilbert spaces, but also the category of quaternionic Hilbert spaces.
\end{abstract}

\section{Introduction}

We consider the category $\mathbf{Hilb}_{\mathbb{K}}$ of Hilbert
spaces over $\mathbb{K}$, where $\mathbb{K}$ is either the real
numbers $\mathbb{R}$, the complex numbers $\mathbb{C}$, or the
quaternions $\mathbb{H}$. In each case, the morphisms are the bounded
linear maps. In 2022, Chris Heunen and Andre Kornell characterised
$\mathbf{Hilb}_\mathbb{K}$ as a dagger monoidal category where
  $\mathbb{K}$ is $\mathbb{R}$ or $\mathbb{C}$ by establishing a dagger monoidal
equivalence between $\mathbf{Hilb}_\mathbb{K}$ and a category
$\mathbf{C}$ which satisfied a list of purely categorical axioms
\cite{Heunen2022}. The category $\mathbf{C}$ was equipped with a
dagger monoidal structure that defined a commutative multiplication on
the scalars, whence the restriction to $\mathbb{R}$ or
  $\mathbb{C}$. In this paper we follow a similar approach by building a dagger equivalence between $\mathbf{C}$ and $\mathbf{Hilb}$ but do not assume dagger monoidal structure on $\mathbf{C}$. As well as giving a new characterisation of $\mathbf{Hilb}_{\mathbb{R}}$ and $\mathbf{Hilb}_{\mathbb{C}}$, it also allows us to treat $\mathbf{Hilb}_{\mathbb{H}}$.

In 1995, Maria Pia Solèr developed a characterisation of Hilbert
spaces using orthomodular spaces. This was inspired by results in quantum logic by Birkhoff and von Neumann in the mid 1930s \cite{Neumann1936}, and built on the work by Kaplansky on infinite dimensional quadratic forms in the 1950s \cite{Kaplansky1953} and Piron's representation theorem which proved a correspondence between propositional systems and orthomodular spaces in the 1970s \cite{Piron1964}. Solèr's theorem was originally stated as follows:

\begin{theorem}[Solèr's Theorem \cite{Soler1995}]\label{solerstheorem}
Let $(\mathcal{H},\langle\cdot,\cdot\rangle)$ be an infinite dimensional orthomodular space over an involutive division ring $\mathbb{K}$  which contains an orthonormal sequence $(x_i)_{i\in \mathbb{N}}$. Then $\mathbb{K}$ is either $\mathbb{R}$, $\mathbb{C}$ or $\mathbb{H}$, and $(\mathcal{H},\langle\cdot,\cdot\rangle)$ is a Hilbert space over $\mathbb{K}$.
\end{theorem}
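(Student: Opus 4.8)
The plan is to extract, from the orthomodular law and the given orthonormal sequence, enough order structure on the scalars to pin down $\mathbb{K}$, and then to upgrade that algebraic information to a metric completeness statement. Normalise the Hermitian form so that $\langle x\alpha,y\beta\rangle=\alpha^{*}\langle x,y\rangle\beta$ and $\langle e_i,e_j\rangle=\delta_{ij}$. The first observation is that orthomodularity forces the form to be anisotropic: a nonzero $x$ with $\langle x,x\rangle=0$ would generate a line $\mathbb{K}x$ that is closed yet contained in its own orthogonal complement, which the law $M\vee M^{\perp}=\mathcal{H}$ rules out. The target of the main stage is to show that the fixed field $F=\{t\in\mathbb{K}:t^{*}=t\}\cap Z(\mathbb{K})$ of central symmetric scalars, ordered by declaring $t\ge 0$ when $t=\langle x,x\rangle$ for some $x$, contains a Dedekind-complete archimedean subfield, forced to be $\mathbb{R}$.

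The heart of the argument, and the step I expect to be the main obstacle, is Solèr's key lemma, which manufactures the needed suprema out of orthomodularity. Given any sequence of nonzero scalars $\{a_i\}$, one forms the mutually orthogonal vectors
\[
x_i = e_{2i-1} + e_{2i}a_i, \qquad y_i = e_{2i-1}a_i^{*} - e_{2i},
\]
for which a short computation gives $\langle x_i,y_j\rangle=0$ and orthogonality of the two families to one another. Applying the orthomodular law to the closed span $M=\overline{\mathrm{span}}\{x_i\}$ yields a decomposition $\mathcal{H}=M\oplus M^{\perp}$, and computing the component of a fixed basis vector forces the formal series built from the $a_i$ to be represented by an honest vector of $\mathcal{H}$. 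Decoding this representation produces the statement that a bounded increasing sequence of symmetric scalars admits a least upper bound. The delicate part will be the bookkeeping that turns the projection identity into a genuine order-completeness (and archimedean) assertion, together with the careful tuning of the $a_i$ so that the relevant bound is actually attained; this is exactly where a naive attack stalls and where Solèr's ingenuity resides.

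With $F\cong\mathbb{R}$ in hand, the identification of $\mathbb{K}$ becomes classical. The form is now positive definite over an involutive division ring whose central symmetric scalars form the complete archimedean field $\mathbb{R}$; positive-definiteness together with the completeness of $F$ forces $\mathbb{K}$ to be finite-dimensional over $\mathbb{R}$, and the Frobenius theorem then leaves only $\mathbb{R}$, $\mathbb{C}$ and $\mathbb{H}$. I would invoke this characterisation (as assembled in the standard exposition of Solèr's theorem) rather than reprove it.

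Finally I would establish metric completeness. Once $\mathbb{K}\in\{\mathbb{R},\mathbb{C},\mathbb{H}\}$, the quantity $\|x\|=\sqrt{\langle x,x\rangle}$ is real-valued and is a genuine norm, with Cauchy--Schwarz following from the usual one-variable optimisation applied to $\langle x-y\lambda,\,x-y\lambda\rangle\ge 0$. The Amemiya--Araki theorem then applies: a pre-Hilbert space over one of these division rings that satisfies the orthomodular law is automatically complete. Hence $(\mathcal{H},\langle\cdot,\cdot\rangle)$ is a Hilbert space over $\mathbb{K}$, as claimed. Note that the two completeness facts used here operate at different levels, namely order completeness of the scalar field $F$ and metric completeness of the space, and both are needed.
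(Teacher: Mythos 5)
The paper offers no proof of this statement: Theorem~\ref{solerstheorem} is Sol\`er's theorem, imported verbatim from \cite{Soler1995} and used as a black box in Section~6 to identify $\mathbf{C}(K,K)$ with $\mathbb{R}$, $\mathbb{C}$ or $\mathbb{H}$. So your attempt can only be measured against the proof in the literature, and on that score your outline is a faithful road map of Sol\`er's actual argument (and of its standard expositions by Holland and Prestel): anisotropy of the form from orthomodularity plus nondegeneracy (your argument is sound, since finite-dimensional subspaces, in particular lines, are $\perp$-closed in a nondegenerate Hermitian space); the auxiliary vectors $x_i=e_{2i-1}+e_{2i}a_i$, $y_i=e_{2i-1}a_i^{*}-e_{2i}$ used to encode a scalar sequence into a closed subspace; the identification of the symmetric scalars with $\mathbb{R}$ via an order-completeness argument; a Frobenius-type classification of $\mathbb{K}$; and finally the Amemiya--Araki theorem, which is indeed the correct closing move, converting lattice-theoretic orthomodularity into metric completeness once $\mathbb{K}\in\{\mathbb{R},\mathbb{C},\mathbb{H}\}$. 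Your closing remark that two completenesses at different levels are in play is exactly right.

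As a proof, however, the proposal has a genuine gap, and it is the one you flag yourself: the step from the decomposition $\mathcal{H}=M\oplus M^{\perp}$ with $M=\overline{\mathrm{span}}\{x_i\}$ to the existence of suprema for bounded monotone sequences of symmetric scalars, together with archimedeanity, is not ``delicate bookkeeping'' --- it is the theorem. Carrying it out requires, among other things, Sol\`er's harmonisation lemmas (showing one may arrange orthogonal sequences with prescribed, in particular constant, values $\langle x_i,x_i\rangle$, which is where the hypothesis of an \emph{orthonormal} sequence enters irreplaceably), a separate proof of the archimedean property, and only then the supremum extraction; this occupies the bulk of \cite{Soler1995} and your sketch asserts rather than performs it. Two smaller repairs: from $F\cong\mathbb{R}$ one does not directly deduce that $\mathbb{K}$ is finite-dimensional over $\mathbb{R}$; the standard route splits each $k\in\mathbb{K}$ into symmetric and antisymmetric parts and shows $k$ is algebraic of degree at most $2$ over $\mathbb{R}$, then invokes the algebraic form of Frobenius' theorem, with finite-dimensionality a consequence rather than an intermediate step. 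And before Amemiya--Araki applies you must verify positive definiteness of the form, which follows from anisotropy together with the orthonormal sequence once the scalar order is in place, but should be said. Since the paper itself only cites the result, the appropriate move in a write-up is the same --- cite \cite{Soler1995} --- or else the key lemma must be proved in full; as it stands your text is an accurate summary of the proof, not a proof.
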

\noindent
We follow \cite{Heunen2022} in using Solèr's theorem in our characterisation of $\mathbf{Hilb}_\mathbb{K}$. Much of what we do follows the approach in \cite{Heunen2022}. Since our assumptions are different - especially in not assuming a tensor product - the details necessarily differ.
  
To establish an equivalence between an abstract category $\mathbf{C}$
and $\mathbf{Hilb}_\mathbb{K}$ we begin with the fact that any Hilbert
space $\mathcal{H}$ is in bijection to the set of bounded linear maps
$\mathbb{K}\to \mathcal{H}$, and in particular the scalars
$\mathbb{K}$ correspond to bounded linear maps
$\mathbb{K}\to\mathbb{K}$. This correspondence allows us to treat
homsets of the form $\mathbf{C}(K,H)$ for some $K$, as candidates for
Hilbert spaces with $\mathbf{C}(K,K)$ as scalars. We eventually lift
the hom-functor $\mathbf{C}(K,-)$ to a dagger equivalence $\mathbf{C}\simeq \mathbf{Hilb}_{\mathbf{C}(K,K)}$. 

In Section 2 we set $\mathbf{C}$ to  be a dagger category and study
the linear structure that emerges on $\mathbf{C}(K,H)$ when equipping
$\mathbf{C}$ with dagger biproducts. In Section 3, we require
$\mathbf{C}$ to have dagger equalizers, all dagger monomorphisms in $\mathbf{C}$ to be kernels, and the object $K$ to be a simple generator. The scalars $\mathbf{C}(K,K)$ then have the
structure of a division ring with involution. In Section 4, we suppose the wide subcategory of dagger monomorphisms of $\mathbf{C}$ to have directed colimits and translate the notion of orthogonal decomposition for Hilbert spaces to $\mathbf{C}(K,H)$ so that each such homset is an orthomodular space. In Section 5 we introduce the notion of an orthonormal basis for $\mathbf{C}(K,H)$. This condition allows the construction of colimit objects that directly correspond to the familiar Hilbert spaces  $\ell^2(X)$ of a set $X$. Finally, in Section 6 we verify that $\mathbf{C}({K,H})$ satisfies the conditions for Solèr's Theorem and show $\mathbf{C}(K,-)\colon \mathbf{C}\to\mathbf{Hilb}_{\mathbf{C}(K,K)}$ to be a dagger equivalence. This motivates the following definition which will form the basis of our characterisation: 
\begin{Definition}[Hilbert category]\label{defhilbertcategory}
A \textit{Hilbert category}  $\mathbf{C}$ is a category satisfying the following conditions:
\begin{itemize}
\item[(D)]  $\mathbf{C}$ is equipped with a dagger $\dagger\colon \mathbf{C}^\mathrm{op}\to \mathbf{C}$;
\item[(G)]  $\mathbf{C}$ is equipped with a simple generator $K$;
\item[(B)]  $\mathbf{C}$ has a zero object and any pair of objects $A,B\in \mathbf{C}$ has a dagger biproduct;
\item[(E)]  Any pair of parallel morphisms $f,g$ has a dagger equalizer;
\item[(K)]  Any dagger monomorphism is a dagger kernel;
\item[(C)]  The wide subcategory $\mathbf{C}_{\mathrm{dm}}$ of dagger monomorphisms has directed colimits;
\end{itemize}
\end{Definition}
\noindent In Section 7 we briefly investigate the effects of a monoidal structure on a Hilbert category and the connection between our result and the main result of \cite{Heunen2022}.

The material in this paper is based on the second-named author's 2023 Master's thesis \cite{Tobin2024}. During the conversion of this thesis into
the present format we became aware of the preprint
\cite{Dimeglio2024}, which among other things, gives an alternative
approach to the characterisation in \cite{Heunen2022} which does not
rely on Sol\`er's theorem.

\section{Linear Structure}

We now look at the categorical structures required on a category
$\mathbf{C}$ so that for a given $K$ the homset $\mathbf{C}(K,K)$ is
an involutive semiring and for any object $H$ the homset
$\mathbf{C}(K,H)$ is a right semimodule over $\mathbf{C}(K,K)$. We
speak of semirings and semimodules instead of ring and modules when additive inverses are not assumed to exist.

\begin{Definition}[Dagger category]
A \textit{dagger} on a category $\mathbf{C}$ is a contravariant involution $\dagger\colon \mathbf{C}^{\mathrm{op}}\to \mathbf{C}$ which acts as the identity on objects; thus for each $f\colon A\to B$ there is a chosen map $f^\dagger\colon B\to A$. A \textit{dagger category} is a category equipped with a dagger.
\end{Definition}

\begin{Definition}[Dagger morphisms]
Let $f\colon A\to B$ be a morphism of a dagger category $\mathbf{C}$. We say that $f$ is a \textit{dagger monomorphism} if $f^\dagger f = 1_A$ and a \textit{dagger epimorphism} if $ff^\dagger=1_B$. If $f$ is both a $\dagger$-mono and $\dagger$-epi then we call it a \textit{unitary isomorphism}; then $f$ is invertible with $f^\dagger=f^{-1}$.
\end{Definition}

\begin{Definition}[Dagger Functor]
A \textit{dagger functor} is a functor $F\colon \mathbf{C}\to \mathbf{D}$ between dagger categories $(\mathbf{C},\dagger$) and $(\mathbf{D},\dagger )$ which preserves the dagger structure: $F(f^\dagger) = F(f)^\dagger$ for each $f\in \mathbf{C}(A,B)$. 
\end{Definition}

\begin{Definition}[Dagger Biproduct]
Let $\mathbf{C}$ be a dagger category with zero objects. A \textit{dagger biproduct} between two objects $A$ and $B$ in $\mathbf{C}$ is a product of $A$ and $B$, denoted $A\oplus B$, with projections $p\colon A\oplus B\to A$ and $q\colon A\oplus B\to B$ where $p$ and $q$ are dagger epimorphisms, and $pq^\dagger$ and $qp^\dagger$ are zero morphisms. 
\end{Definition}

Equipping the category $\mathbf{C}$ with a (dagger) biproduct gives an enrichment of $\mathbf{C}$ in commutative monoids: each homset $\mathbf{C}(A,B)$ is a commutative monoid under an addition defined as
\begin{align*}
+\colon \mathbf{C}(A,B)\times\mathbf{C}(A,B)&\to \mathbf{C}(A,B)\\
(f,g)&\mapsto f+g := \nabla(f\oplus g)\Delta
\end{align*}
where $\Delta\colon A\to A\oplus A$ and $\nabla\colon B\oplus B\to B$ are the diagonal and codiagonal maps respectively.

The unit for this monoid is the zero map $0\colon A\to B$. It follows that $\langle \mathbf{C}(K,K),+, \circ \rangle $ is a semiring. 
We define a right action of $\mathbf{C}(K,K)$ on $\mathbf{C}(K,A)$ by
\begin{align*}
\cdot\colon \mathbf{C}(K,A)\times\mathbf{C}(K,K)&\to \mathbf{C}(K,A)\\
(f,\lambda)&\mapsto f\cdot \lambda :=f\circ \lambda
\end{align*}
which along with the biproduct structure on $\mathbf{C}$ leads to the following lemma.

\begin{lemma}\label{semimodule}
Let $\mathbf{C}$ be a dagger category with dagger biproducts and let $K$ be an object of $\mathbf{C}$. The action $\cdot\colon \mathbf{C}(K,A)\times\mathbf{C}(K,K)\to \mathbf{C}(K,A)$ is a right scalar multiplication of the semiring $\langle \mathbf{C}(K,K),+, \circ \rangle $ on the commutative monoid $\langle \mathbf{C}(K,A),+\rangle$. The homset $\mathbf{C}(K,A)$ is then a right semimodule over $\mathbf{C}(K,K)$.
\end{lemma}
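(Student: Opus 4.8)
The plan is to verify the semimodule axioms directly, leveraging the commutative-monoid enrichment that the dagger biproduct already provides. Recall that we must check four conditions for the right action $f\cdot\lambda = f\circ\lambda$ of the semiring $S:=\mathbf{C}(K,K)$ on the commutative monoid $M:=\mathbf{C}(K,A)$: namely that $f\cdot(\lambda\mu)=(f\cdot\lambda)\cdot\mu$, that $f\cdot 1_K = f$, that the action distributes over addition in the scalar variable, $f\cdot(\lambda+\mu)=f\cdot\lambda + f\cdot\mu$, and that it distributes over addition in the module variable, $(f+g)\cdot\lambda = f\cdot\lambda + g\cdot\lambda$. The first two are immediate from associativity and unitality of composition in $\mathbf{C}$, so the entire content lies in the two distributivity laws, which amount to showing that precomposition and postcomposition interact correctly with the biproduct-induced addition.

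First I would record the explicit form of addition, $f+g = \nabla(f\oplus g)\Delta$, and unpack the standard identities governing $\Delta$, $\nabla$, and $\oplus$ that follow from the universal property of the biproduct: in particular that for any morphism $h$ into $K$ one has $\Delta_K\circ h = (h\oplus h)\circ \Delta$ (naturality of the diagonal in a category with finite products) and dually $h\circ\nabla = \nabla\circ(h\oplus h)$ for $h$ out of the codomain. I would also use functoriality of $\oplus$, namely $(f\oplus g)(f'\oplus g') = (ff')\oplus(gg')$. With these in hand, the two distributivity laws become short diagram chases.

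For distributivity in the module variable, $(f+g)\cdot\lambda = (\nabla(f\oplus g)\Delta)\circ\lambda$, I would slide $\lambda$ through the diagonal using naturality, $\Delta\circ\lambda = (\lambda\oplus\lambda)\circ\Delta$, then absorb it into the biproduct via functoriality to obtain $\nabla((f\lambda)\oplus(g\lambda))\Delta = f\cdot\lambda + g\cdot\lambda$. For distributivity in the scalar variable, $f\cdot(\lambda+\mu) = f\circ\nabla(\lambda\oplus\mu)\Delta$, I would push $f$ through the codiagonal from the other side; the relevant identity is that postcomposition commutes with $\nabla$ up to the functorial action on $\oplus$, giving $f\circ\nabla = \nabla\circ(f\oplus f)$ appropriately, and then collapsing the composite $\nabla((f\lambda)\oplus(f\mu))\Delta$ to $f\cdot\lambda + f\cdot\mu$. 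Each step is a routine rewriting using only the structural identities above, so I would present these as two displayed chains of equalities rather than belabour them.

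The main obstacle, such as it is, is not any single hard step but making sure the biproduct identities I invoke are genuinely available from the \emph{dagger} biproduct axioms as stated, rather than silently assuming the ambient category is already additive or that the monoid enrichment is compatible with composition. In other words, the care required is in confirming that the commutative-monoid enrichment induced by the biproduct is in fact \emph{functorial}, so that composition is bilinear with respect to $+$; once bilinearity of composition is established, both distributivity laws are immediate special cases of it (with one argument restricted to $S$). I would therefore organise the proof around first noting the bilinearity of composition with respect to the biproduct addition, and then reading off the semimodule axioms as corollaries, which keeps the argument clean and avoids repeating essentially the same diagram chase twice.
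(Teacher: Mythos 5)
The paper offers no written proof of Lemma~\ref{semimodule}: it treats the statement as an immediate consequence of the commutative-monoid enrichment induced by the dagger biproducts, which is exactly the route you take, and your verification via bilinearity of composition (naturality of $\Delta$ and $\nabla$ plus functoriality of $\oplus$, noting that a dagger biproduct is also a coproduct with injections $p^\dagger,q^\dagger$, so the codiagonal identity $f\circ\nabla = \nabla\circ(f\oplus f)$ is available) is correct and is the standard argument.

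One small omission: since you are working with semirings and semimodules, where additive inverses are absent, the absorption laws $f\cdot 0 = 0$ and $0\cdot\lambda = 0$ are independent axioms and do not follow from distributivity (the usual trick $f\cdot 0 = f\cdot(0+0)$ needs cancellation). Your list of ``four conditions'' skips them. They are, however, immediate here: the additive unit is the zero morphism, which factors through the zero object, so composing it on either side again yields a zero morphism. Adding one sentence to that effect would make the proof complete.
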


The following is a property of dagger biproducts that results from \cite[Lemma 2.12]{Selinger2007}: 

\begin{lemma}
Let $\mathbf{C}$ be a dagger category with dagger biproducts. Then the
dagger distributes over the biproducts in the sense that $(f\oplus g)^\dagger = f^\dagger\oplus g^\dagger$ for each $f,g\colon A\to B$. Also, $\Delta^\dagger =\nabla$ for the diagonal map $\Delta $ and codiagonal map $\nabla$.
\end{lemma}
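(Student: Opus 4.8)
The plan is to reduce both identities to the universal properties of the biproduct, using as the single structural input the fact that the daggered projections are the biproduct injections, together with the \emph{resolution of the identity} $p^\dagger p + q^\dagger q = 1_{A\oplus B}$. I would establish this resolution first. Writing $p,q$ for the projections of a biproduct $A\oplus B$, set $e := p^\dagger p + q^\dagger q$. Using the dagger-epi conditions $pp^\dagger = 1_A$ and $qq^\dagger = 1_B$, the vanishing conditions $pq^\dagger = 0 = qp^\dagger$, and the bilinearity of composition coming from the commutative-monoid enrichment, one computes $pe = p$ and $qe = q$. Since $A\oplus B$ is a product, the universal property forces $e = 1_{A\oplus B}$.

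The same computations show that $i := p^\dagger$ and $j := q^\dagger$ exhibit $A\oplus B$ as a coproduct: given $u\colon A\to X$ and $v\colon B\to X$, the map $up + vq$ has the right composites with $i$ and $j$, and the resolution $ip + jq = 1_{A\oplus B}$ yields $h = (hi)p + (hj)q$ for every $h\colon A\oplus B\to X$, so such a map is determined by its composites with $i$ and $j$. Dually, a map into $A\oplus B$ is determined by its composites with $p$ and $q$. This ``components determine the map'' principle is the engine of the whole argument, and contravariant functoriality of the dagger does the rest.

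For $\Delta^\dagger = \nabla$, let $p_1,p_2$ be the projections of $A\oplus A$, so $\Delta\colon A\to A\oplus A$ is characterised by $p_1\Delta = p_2\Delta = 1_A$, while $\nabla\colon A\oplus A\to A$ is characterised by $\nabla p_1^\dagger = \nabla p_2^\dagger = 1_A$. Daggering the defining equations of $\Delta$ and using $1_A^\dagger = 1_A$ gives $\Delta^\dagger p_1^\dagger = \Delta^\dagger p_2^\dagger = 1_A$, which are exactly the equations characterising $\nabla$ as a map out of the coproduct $A\oplus A$; by the uniqueness just established, $\Delta^\dagger = \nabla$. For $(f\oplus g)^\dagger = f^\dagger\oplus g^\dagger$ with $f,g\colon A\to B$, write $q_1,q_2$ for the projections of $B\oplus B$. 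The map $f\oplus g$ is characterised by $q_1(f\oplus g) = f p_1$ and $q_2(f\oplus g) = g p_2$; daggering these yields $(f\oplus g)^\dagger q_1^\dagger = p_1^\dagger f^\dagger$ and $(f\oplus g)^\dagger q_2^\dagger = p_2^\dagger g^\dagger$. A direct check with the matrix expression $f^\dagger\oplus g^\dagger = p_1^\dagger f^\dagger q_1 + p_2^\dagger g^\dagger q_2$ shows $f^\dagger\oplus g^\dagger$ satisfies the very same equations when composed with the injections $q_1^\dagger,q_2^\dagger$ of $B\oplus B$, so uniqueness again forces the two maps to agree.

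I expect the only genuine obstacle to lie in the first step: carefully combining the dagger-biproduct axioms with the additive enrichment to pin down the resolution of identity and confirm that the daggered projections are honest biproduct injections. Once that uniqueness principle is available, both stated identities fall out immediately from contravariance and involutivity of the dagger, with no further calculation.
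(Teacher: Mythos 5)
Your argument is correct, but it differs from the paper in a basic way: the paper gives no proof at all, recording the lemma as a consequence of Lemma 2.12 of \cite{Selinger2007}. Your route is a self-contained verification — establish the resolution of identity $p^\dagger p + q^\dagger q = 1_{A\oplus B}$, conclude that $p^\dagger, q^\dagger$ are genuine coproduct injections so that maps out of $A\oplus B$ are determined by their composites with them, then obtain both identities by daggering the defining equations of $\Delta$ and of $f\oplus g$; each step checks out, including the matrix computation $f^\dagger\oplus g^\dagger = p_1^\dagger f^\dagger q_1 + p_2^\dagger g^\dagger q_2$. The citation buys the paper brevity; your route buys independence from the external reference and makes explicit which axioms ($pp^\dagger=1$, $pq^\dagger=0$, additivity) are actually used. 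One wrinkle: you derive the coproduct property of $(p^\dagger,q^\dagger)$ from the commutative-monoid enrichment, but the enrichment as the paper defines it, $f+g=\nabla(f\oplus g)\Delta$, already presupposes the codiagonal $\nabla$ and hence the coproduct structure. Within the paper's development this is harmless, since the enrichment is asserted before the lemma; but a fully non-circular account is available without any additivity: the dagger is an involutive isomorphism $\mathbf{C}^{\mathrm{op}}\to\mathbf{C}$ fixing objects, so it carries the product cone $(p,q)$ to a cocone $(p^\dagger,q^\dagger)$, and for $u\colon A\to X$, $v\colon B\to X$ the dagger of the unique $k\colon X\to A\oplus B$ with $pk=u^\dagger$, $qk=v^\dagger$ is the unique $h$ with $hp^\dagger=u$, $hq^\dagger=v$, uniqueness following by daggering back to the product. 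With that in hand, your resolution of identity and both daggering arguments go through verbatim.
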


\begin{cor}[{\cite[Corollary 2.42]{Heunen2019}}]\label{daggerdistribution}
Let $\mathbf{C}$ be a dagger category with dagger biproducts, then 
\begin{align*} 
(f+g)^\dagger = (\nabla\circ (f\oplus g) \circ \Delta )^\dagger =  \Delta^\dagger\circ (f\oplus g)^\dagger \circ \nabla^\dagger = \nabla\circ (f^\dagger \oplus g^\dagger) \circ \Delta = f^\dagger +g^\dagger.
\end{align*}
for each $f,g\in \mathbf{C}(A,B)$.
\end{cor}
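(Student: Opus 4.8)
The plan is to read the claimed chain of equalities from left to right, justifying each link in turn, since every ingredient needed is already in hand. First I would recall that the addition on $\mathbf{C}(A,B)$ was \emph{defined} by $f+g := \nabla\circ(f\oplus g)\circ\Delta$. This is precisely what gives the first equality $(f+g)^\dagger = (\nabla\circ(f\oplus g)\circ\Delta)^\dagger$, so there is nothing to prove there beyond unfolding the definition of $+$.

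For the second equality I would invoke the contravariance of the dagger $\dagger\colon \mathbf{C}^{\mathrm{op}}\to\mathbf{C}$, which reverses composites: $(h\circ k)^\dagger = k^\dagger\circ h^\dagger$. Applying this to the triple composite $\nabla\circ(f\oplus g)\circ\Delta$ yields $\Delta^\dagger\circ(f\oplus g)^\dagger\circ\nabla^\dagger$, with the order of the three factors reversed.

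The third equality is where the preceding lemma does the work. That lemma supplies both $(f\oplus g)^\dagger = f^\dagger\oplus g^\dagger$ and $\Delta^\dagger=\nabla$. The only point requiring a moment's care is the factor $\nabla^\dagger$: I would obtain it by taking the dagger of the identity $\Delta^\dagger=\nabla$ and using that $\dagger$ is an involution, whence $\nabla^\dagger = (\Delta^\dagger)^\dagger = \Delta$. Substituting these three facts converts $\Delta^\dagger\circ(f\oplus g)^\dagger\circ\nabla^\dagger$ into $\nabla\circ(f^\dagger\oplus g^\dagger)\circ\Delta$.

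Finally, the last equality is once more just the definition of addition, now applied to $f^\dagger$ and $g^\dagger$ in place of $f$ and $g$, giving $f^\dagger+g^\dagger$. I do not expect any genuine obstacle here: the statement is a formal consequence of the functoriality (contravariance) of $\dagger$ together with the distribution lemma, and the single step one might easily overlook is the use of involutivity to rewrite $\nabla^\dagger$ as $\Delta$.
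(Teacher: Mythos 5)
Your proposal is correct and follows exactly the route the paper takes: the displayed chain of equalities in the corollary \emph{is} the paper's proof, justified by the definition of $+$, contravariance of $\dagger$, and the preceding lemma giving $(f\oplus g)^\dagger = f^\dagger\oplus g^\dagger$ and $\Delta^\dagger=\nabla$. Your explicit note that $\nabla^\dagger=\Delta$ follows by involutivity from $\Delta^\dagger=\nabla$ is a small step the paper leaves implicit, but it is the same argument.
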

The dagger biproduct on a dagger category $\mathbf{C}$ also allows for the notion of a sesquilinear map on $\mathbf{C}(K,H)$. 
\begin{prop}\label{sesquilinearform}
Let $\mathbf{C}$ be a dagger category with dagger biproducts. Then the right semimodule $\langle\mathbf{C}(K,A),+,\cdot,0\rangle$ with involution $\dagger$ has a left linear sesquilinear form. 
\begin{align*}
\langle\cdot,\cdot\rangle\colon \mathbf{C}(K,A)\times \mathbf{C}(K,A)&\to \mathbf{C}(K,K)\\
(f,g)&\mapsto \langle f,g\rangle:=g^\dagger f
\end{align*}
\end{prop}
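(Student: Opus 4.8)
The plan is to verify directly that the assignment $(f,g)\mapsto g^\dagger f$ satisfies the defining properties of a left linear sesquilinear form on the right semimodule $\mathbf{C}(K,A)$: additivity in each argument, linearity in the first (left) argument with respect to the right action, and conjugate-linearity in the second (right) argument. Each property reduces to an identity already available: distributivity of composition over the monoid addition (the commutative-monoid enrichment recorded after the definition of dagger biproduct), additivity of the dagger from Corollary~\ref{daggerdistribution}, and the contravariant multiplicativity and involutivity built into the definition of a dagger. First I would note well-definedness: since $g^\dagger\colon A\to K$ and $f\colon K\to A$, the composite $g^\dagger f\colon K\to K$ indeed lands in $\mathbf{C}(K,K)$.

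Next I would check additivity in each variable. For the left argument, $\langle f_1+f_2,g\rangle=g^\dagger(f_1+f_2)=g^\dagger f_1+g^\dagger f_2=\langle f_1,g\rangle+\langle f_2,g\rangle$, where the middle step is left distributivity of composition over the biproduct-induced addition. For the right argument I would first push the dagger through the sum using Corollary~\ref{daggerdistribution} and then apply right distributivity: $\langle f,g_1+g_2\rangle=(g_1+g_2)^\dagger f=(g_1^\dagger+g_2^\dagger)f=g_1^\dagger f+g_2^\dagger f=\langle f,g_1\rangle+\langle f,g_2\rangle$. For homogeneity, linearity in the left argument is just associativity of composition together with the definition $f\cdot\lambda=f\circ\lambda$: $\langle f\cdot\lambda,g\rangle=g^\dagger(f\lambda)=(g^\dagger f)\lambda=\langle f,g\rangle\lambda$, so the scalar emerges on the right, matching the right action. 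Conjugate-linearity in the right argument uses contravariant multiplicativity of the dagger: $\langle f,g\cdot\lambda\rangle=(g\lambda)^\dagger f=\lambda^\dagger g^\dagger f=\lambda^\dagger\langle f,g\rangle$, so here the scalar emerges on the left after the involution. Together these four identities say precisely that $\langle\cdot,\cdot\rangle$ is left linear and sesquilinear.

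The computations are entirely routine, so I expect no serious obstacle; the only points requiring care are bookkeeping ones. I must make sure that composition genuinely distributes over the biproduct-induced addition on \emph{both} sides, invoking the commutative-monoid enrichment rather than silently assuming biadditivity, and I must keep careful track of the side on which scalars land and of where the involution is applied, since this left/right asymmetry is exactly what separates the linear from the conjugate-linear variable. If desired, the same identities also yield the Hermitian symmetry $\langle f,g\rangle^\dagger=(g^\dagger f)^\dagger=f^\dagger g=\langle g,f\rangle$, using only involutivity and contravariance of the dagger, though this is not demanded by the statement.
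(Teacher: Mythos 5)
Your proof is correct and takes essentially the same approach as the paper: a routine direct verification using distributivity of composition over the biproduct-induced addition, additivity of the dagger (Corollary~\ref{daggerdistribution}), and contravariance of the dagger. The only cosmetic difference is that the paper verifies left linearity in one combined identity and then deduces the second-argument behaviour from the conjugate symmetry $\langle f,g\rangle=\langle g,f\rangle^\dagger$ (which it does prove, and which you relegate to an optional remark), whereas you check all four identities directly --- both packagings are complete.
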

\begin{proof}
Let $\lambda,\mu\in \mathbf{C}(K,K)$ and $f,g,h\in \mathbf{C}(K,A)$. Linearity in the first argument is a direct calculation, 

\begin{align*}
\langle f\cdot \lambda+g\cdot\mu, h \rangle 
= h^\dagger \circ (f\circ \lambda+g\circ\mu) 
= (h^\dagger \circ f\circ \lambda) +(h^\dagger \circ g\circ \mu)
= \langle f,h\rangle \lambda +\langle g, h \rangle\mu
\end{align*}
and conjugate symmetry follows from the contravariance of the dagger, as in
\[
\langle f, g\rangle = g^\dagger\circ f = (f^\dagger\circ g)^\dagger =
  \langle g,f\rangle ^\dagger. \qedhere
\]
\end{proof}
\section{Scalars}

\begin{Definition}[Dagger equalizer]
A \textit{dagger equalizer} is an equalizer which is also a dagger monomorphism.
\end{Definition}

\begin{prop}\label{factorisation}
Let $\mathbf{C}$ be a dagger category with dagger biproducts and dagger equalizers. Then any morphism of $\mathbf{C}$ is the composite of an epimorphism followed by a dagger monomorphism. 
\end{prop}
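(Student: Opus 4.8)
The plan is to construct the dagger monomorphism as the \emph{image} of $f$, realised as the kernel of the cokernel, and then to check that the complementary factor is epic. First I would observe that the hypotheses already supply cokernels by duality: the coequaliser of $f,0\colon A\to B$ is $\mathrm{coker}(f)=(\ker f^\dagger)^\dagger$, where $\ker f^\dagger$ is the dagger equaliser of $f^\dagger$ and $0$ guaranteed by (E); being the dagger of a dagger monomorphism, $c:=\mathrm{coker}(f)$ is a dagger epimorphism. Writing $m:=\ker(c)$ for the dagger equaliser of $c$ and $0$, the map $m$ is a dagger monomorphism. Since $cf=0$ by construction, $f$ factors uniquely through $m$, say $f=me$; because $m^\dagger m=1$ we may take $e=m^\dagger f$. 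This part is purely formal and uses only (D), (B) and (E).

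Everything then reduces to showing that $e$ is an epimorphism. The one clean computation I would isolate is that $e$ admits no nonzero left annihilator: if $ue=0$ then $(um^\dagger)f=u(m^\dagger f)=ue=0$, so $um^\dagger$ factors through $c=\mathrm{coker}(f)$, say $um^\dagger=vc$; composing with $m$ and using $cm=0$ together with $m^\dagger m=1$ gives $u=um^\dagger m=vcm=0$. Hence $e$ has trivial cokernel.

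To upgrade this to a genuine epimorphism I would establish that $m$ is the least dagger subobject through which $f$ factors: if $f=ne'$ with $n$ a dagger monomorphism, then $\mathrm{coker}(n)\,f=0$, so $\mathrm{coker}(n)$ factors through $c$, whence $\mathrm{coker}(n)\,m=0$ and $m$ factors through $\ker(\mathrm{coker}(n))$. Given a parallel pair $g,h$ with $ge=he$, I would form the dagger equaliser $w=\mathrm{eq}(g,h)$, through which $e$ factors as $e=we''$, so that $f=mwe''$ factors through the dagger monomorphism $mw$; applying the previous step to $mw$ shows $m$ factors through $mw$, say $m=mw\,t$, and since $m$ is monic this forces $wt=1$. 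Thus $w$ is split epic as well as a dagger monomorphism, hence invertible, and $gw=hw$ yields $g=h$.

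The step that carries all the weight, and the one I expect to be the main obstacle, is the identity $\ker(\mathrm{coker}(n))=n$ invoked above: it asserts that the dagger monomorphisms in play (the equaliser $w$, and its composite $mw$ with the kernel $m$) are themselves kernels. This would be automatic from the statement that every dagger monomorphism is a kernel, but that is axiom (K), which is deliberately not assumed here; it must instead be extracted from (D), (B) and (E) alone. Concretely it suffices to prove that equalisers are kernels, and, via stability of kernels under pullback together with the description of $\mathrm{eq}(g,h)$ as the pullback of the diagonal $\Delta_Z\colon Z\to Z\oplus Z$ along $\langle g,h\rangle\colon I\to Z\oplus Z$, this reduces to showing that the diagonal itself is a kernel. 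Verifying this is where the dagger biproduct structure must genuinely be used (passing through $\mathrm{coker}(\Delta_Z)$ and the additive enrichment it induces on the homsets), and it is the technical heart of the proposition; the remainder is bookkeeping with the universal properties of kernels and cokernels.
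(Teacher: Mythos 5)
Your setup is fine as far as it goes: cokernels do exist here (as daggers of dagger equalizers with $0$), the factorisation $f=me$ with $m=\ker(\mathrm{coker}(f))$ and $e=m^\dagger f$ is legitimate, and your computation that $ue=0$ forces $u=0$ is correct. But the proof is incomplete exactly where you say it is, and the gap is more serious than your sketch suggests. Your reduction needs $n=\ker(\mathrm{coker}(n))$ for the dagger monomorphisms $w=\mathrm{eq}(g,h)$ and $mw$, and you propose to obtain this from the claim that the diagonal $\Delta_Z\colon Z\to Z\oplus Z$ is a kernel, to be verified ``using the additive enrichment''. At this stage, however, the biproducts enrich $\mathbf{C}$ only in \emph{commutative monoids}: additive inverses are obtained only later (Lemma \ref{divisionring}), and only using the simple generator $K$, which Proposition \ref{factorisation} does not assume. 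So the standard description $\Delta_Z=\ker(p_1-p_2)$ is unavailable, and no proof that $\Delta_Z$ is a kernel from (D), (B), (E) alone is offered or evident. The same absence of subtraction is why your ``trivial left annihilator'' property of $e$ is strictly weaker than epimorphy here: $ge=he$ cannot be rewritten as $(g-h)e=0$. Moreover, even granting that dagger equalizers are kernels, your minimality step is applied to the \emph{composite} $mw$, so you would also need composites of kernels to be kernels --- a fact available in dagger kernel categories, i.e.\ under axiom (K), which is deliberately withheld. (A small repair: from $e=we''$ and the annihilator property you get $\mathrm{coker}(w)e=\mathrm{coker}(w)we''=0$, hence $\mathrm{coker}(w)=0$, so ``$w$ is a kernel'' alone would give $w\cong\ker(0)$ invertible; this eliminates the composite issue, but the diagonal lemma itself remains unproved.)

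The paper bypasses kernels of cokernels entirely, and the contrast is instructive: it takes $m$ to be the dagger equalizer of the \emph{cokernel pair} $f_1,f_2\colon B\rightrightarrows C$ of $f$ (the pushout of $f$ along itself, which exists because biproducts together with coequalizers --- daggers of dagger equalizers --- yield pushouts), and then proves $e$ epic by the dual of the standard regular-category argument \cite[Theorem 2.1.3]{Borceux1994V2}, whose only non-formal ingredient is that dagger monomorphisms are split, so that any pushout of a dagger monomorphism is a (split) monomorphism. In a merely commutative-monoid-enriched category the cokernel pair retains strictly more information than the cokernel (which is why $\ker(\mathrm{coker}(f))$ may be too large a subobject), and the pushout argument never requires any monomorphism to be a kernel. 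Either switch to the cokernel-pair factorisation, or supply an actual proof that $\Delta_Z$ is a kernel from (D), (B), (E); as it stands, the step you yourself identify as the technical heart is missing.
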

\begin{proof}
Let $f\colon A\to B$ be a morphism of $\mathbf{C}$. We then take
$m\colon X\to B$ to be the dagger equalizer of the cokernel pair
$f_1,f_2\colon B\rightrightarrows C$ of $f$ and $e\colon A\to X$ to be
the unique factorization of $f$ through $m$. It remains to show that $e$ is an epimorphism. 

This follows from (the dual of) a standard argument from the theory of regular categories, using the fact that dagger monomorphisms are split, and so any pushout of a dagger monomorphism is a (split) monomorphism. See \cite[Theorem 2.1.3]{Borceux1994V2}. 
\end{proof}

\begin{Definition}[Simple object]
An object $A$ in a dagger category with a zero object $O$ is \textit{simple}
when $O$ and $A$ are its only subobjects and $O\neq A$.
\end{Definition}

\begin{lemma}\label{multiplicativeinverse}
Let $\mathbf{C}$ be a dagger category with a simple object $K$, dagger biproducts and dagger equalizers. Then the semiring $\langle \mathbf{C}(K,K),+,\circ \rangle$ has multiplicative inverses.
\end{lemma}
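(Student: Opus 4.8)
The plan is to show that any nonzero scalar $\lambda\in\mathbf{C}(K,K)$ is invertible by exploiting the simplicity of $K$. The key idea is that simplicity forces every nonzero endomorphism of $K$ to be a unitary isomorphism, since its image and kernel are subobjects of $K$ and must therefore be either $O$ or $K$. First I would use Proposition \ref{factorisation} to factor $\lambda\colon K\to K$ as $\lambda = m\circ e$ with $e$ an epimorphism and $m\colon X\to K$ a dagger monomorphism. Since $m$ is a dagger monomorphism, it represents a subobject of $K$, so by simplicity either $X\cong O$ or $m$ is an isomorphism (i.e.\ $X\cong K$ with $m$ invertible). The case $X\cong O$ would force $\lambda=0$, contradicting $\lambda\neq 0$; hence $m$ is invertible.

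Next I would handle the epimorphism $e\colon K\to X$. Having identified $X$ with $K$ (via the invertible $m$), we have $e$ an epimorphism from $K$ to $K$. To conclude invertibility of $\lambda$, I would argue that $e$ is also monic, so that as a mono-epi in a suitable setting it is an isomorphism. Alternatively, and perhaps more cleanly, I would dualize: consider the dagger $\lambda^\dagger\colon K\to K$, which is also nonzero (since $\dagger$ is an involution and $0^\dagger=0$), and apply the same factorization argument to $\lambda^\dagger$. This should pin down both $e$ and its dual as isomorphisms, giving a two-sided inverse for $\lambda$.

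The main obstacle I anticipate is the treatment of the epimorphism $e$: the factorization from Proposition \ref{factorisation} only guarantees $e$ is epic, not that it is a dagger monomorphism or even monic, so I cannot immediately invoke simplicity on $e$ the way I can on $m$. To resolve this, I would examine the kernel of $\lambda$ (or equivalently the equalizer of $\lambda$ with $0$): since $K$ is simple, $\ker\lambda$ is a subobject of $K$ and hence either $O$ or $K$; it cannot be $K$ unless $\lambda=0$, so $\ker\lambda = O$, making $\lambda$ monic, and therefore $e$ monic as well. With $e$ both epic and monic, and living in a category with the relevant regularity built from dagger equalizers, I would argue $e$ is an isomorphism. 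Combining with the invertibility of $m$ yields that $\lambda = m\circ e$ is invertible.

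Finally, I would verify that the constructed inverse is genuinely a two-sided inverse in the semiring $\langle\mathbf{C}(K,K),+,\circ\rangle$, so that every nonzero element has a multiplicative inverse. Throughout, the crucial leverage is simplicity converting the subobject dichotomy into the statement that nonzero scalars have no proper kernels or images, which is exactly what forces invertibility.
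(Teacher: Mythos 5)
Your ``alternative'' dualization route is essentially the paper's proof, but as written both of your routes stop short of a valid conclusion, and your primary route is not available under this lemma's hypotheses. The paper argues: factor $\lambda=m\circ e$ with $m$ a dagger monomorphism and $e$ epic (Proposition \ref{factorisation}); simplicity makes $m$ either $0$ or invertible, and $\lambda\neq 0$ forces $m$ invertible, so $\lambda$ is an epimorphism; hence $\lambda^\dagger$ is a monomorphism, and simplicity is applied a \emph{second} time, now to the nonzero monomorphism $\lambda^\dagger\colon K\to K$, to conclude that $\lambda^\dagger$ (and hence $\lambda$) is invertible. Your dualization sketch gets as far as $\lambda$ being both monic and epic, but then asserts that this ``pins down $e$ as an isomorphism'': mono plus epi does not imply iso in a general (dagger) category, and nothing in the hypotheses supplies the balancedness you would need. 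The missing move is precisely the second use of simplicity: a nonzero monomorphism into the simple object $K$ represents the maximal subobject of $K$ and is therefore invertible; applied to $\lambda$ itself (or to $\lambda^\dagger$, as the paper does), this closes the argument with no mono-plus-epi reasoning at all.

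Your primary route has a further, independent problem: the step from $\ker\lambda=O$ to ``$\lambda$ is monic'' requires Proposition \ref{monoiffkerzero}, whose hypotheses include that every dagger monomorphism is a kernel. That axiom is \emph{not} among the hypotheses of this lemma --- in the paper it enters only later, for Lemma \ref{divisionring} --- so the kernel argument cannot be invoked here, and the appeal to ``relevant regularity built from dagger equalizers'' does not rescue it: the factorization of Proposition \ref{factorisation} is (epi, dagger mono), and a plain mono such as $e$ need not be an equalizer, so no iso conclusion follows. Finally, your opening claim that simplicity forces every nonzero endomorphism of $K$ to be a \emph{unitary} isomorphism is an overstatement: such endomorphisms are merely invertible (in $\mathbf{Hilb}$ the scalar $2$ is a nonzero, non-unitary endomorphism of the one-dimensional space), and invertibility is all the lemma claims. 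The fix is to drop the kernel route and finish the dualization route with the explicit second application of simplicity described above.
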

The proof is essentially the argument for Schur's lemma.
\begin{proof}
We have already established that $(\mathbf{C}(K,K),+,\circ)$ is a semiring with involution $\lambda \mapsto \lambda^\dagger$. It remains to show that $\mathbf{C}(K,K)$ has multiplicative inverses. Let $\lambda\in \mathbf{C}(K,K)$ with $\lambda\neq 0$. Then $\lambda$ factors as $\lambda=m\circ e$ where $m$ is a dagger monomorphism and $e$ is an epimorphism by Lemma \ref{factorisation}. Since $K$ is simple, $m$ must be either 0 or an isomorphism, and since $m\circ e = \lambda\neq 0$ it follows that $m$ must be an isomorphism. Thus $\lambda$ is an epimorphism and $\lambda^\dagger$ is a monomorphism. Since $K$ is simple $\lambda^\dagger$ must be either 0 or an isomorphism, but $\lambda\neq 0$ and so $\lambda^\dagger$ is an isomorphism. Therefore $\lambda$ is an isomorphism.
\end{proof}

\begin{Definition}[Dagger kernel]
A \textit{dagger kernel} is a kernel which is also a dagger monomorphism.
\end{Definition}

The following proposition is a result found in \cite[Lemma 2.3]{Heunen2011}.

\begin{prop}\label{monoiffkerzero}
Let $\mathbf{C}$ be a dagger category with dagger equalizers and suppose that every dagger monomorphism is a kernel. Then $f$ is a monomorphism if and only if $\ker f = 0$.
\end{prop}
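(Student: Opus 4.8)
The plan is to prove the two implications separately, with the forward direction being routine and the converse carrying essentially all the weight. Throughout I write $f\colon A\to B$ and recall that, since dagger monomorphisms are kernels, the kernel $\ker f$ means the (dagger) kernel, i.e. the dagger monomorphism equalizing $f$ and the zero map $0\colon A\to B$.

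For the ``only if'' direction, suppose $f$ is monic and let $k\colon K\to A$ be its kernel, so that $fk=0$. The zero map $0\colon K\to A$ also satisfies $f\circ 0=0$, so $fk=f\circ 0$, and left-cancellation of the monomorphism $f$ gives $k=0$. Since the kernel is the universal map annihilated by $f$, the fact that it factors through the zero map says exactly that the kernel object is the zero object, i.e. $\ker f=0$. This step uses only the universal property of the kernel together with monicity.

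For the ``if'' direction I would invoke the factorisation of Proposition \ref{factorisation}: write $f=m\circ e$ with $m$ a dagger monomorphism and $e$ an epimorphism. Because $m$ is a dagger monomorphism it satisfies $m^{\dagger}m=1$, so it is split monic and in particular monic; hence $ex=0$ if and only if $mex=fx=0$, and therefore $\ker e=\ker f=0$. Since a composite of monomorphisms is again monic, it now suffices to show that the epimorphism $e$ is itself monic, which would make $f=m\circ e$ monic.

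I expect this last step to be the main obstacle, and it is where the remaining hypotheses must be used. The epimorphism produced by Proposition \ref{factorisation} is a strong epimorphism (it arises, dually to the argument in the theory of regular categories, as the comparison out of an equalizer of a cokernel pair), so it is orthogonal to monomorphisms. Combining this orthogonality with the assumption that every dagger monomorphism is a kernel, together with the availability of dagger equalizers, one upgrades the condition $\ker e=0$ to invertibility of $e$: a strong epimorphism whose kernel is trivial cannot have a nontrivial dagger-monic ``image'' obstruction, and since dagger monomorphisms split, a strong epimorphism that is also (split) monic is an isomorphism. Hence $e$ is monic and $f=m\circ e$ is monic. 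The delicate point, and the reason one cannot simply mimic the abelian-category argument, is that at this stage the homsets are only semimodules, so no subtraction is available; the work is therefore carried by the factorisation and the dagger-kernel structure exactly as in \cite[Lemma 2.3]{Heunen2011}.
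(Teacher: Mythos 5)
Your ``only if'' direction is fine, modulo one small point worth making explicit: the kernel $k\colon K\to A$ is a dagger monomorphism, so from $k=0$ one gets $1_K=k^\dagger k=0$, whence $K\cong 0$; that is the precise sense in which ``the kernel map is zero'' yields ``the kernel object is zero''. The converse, however, has a genuine gap, and it sits exactly where you predicted. After factoring $f=me$ and reducing to showing that the epimorphism $e$ with $\ker e=0$ is monic, you give no argument: ``a strong epimorphism whose kernel is trivial cannot have a nontrivial dagger-monic image obstruction'' is an assertion, not a proof, and ``a strong epimorphism that is also (split) monic is an isomorphism'' presupposes the monicity you are trying to establish. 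Indeed ``an epimorphism with zero kernel is monic'' is just the proposition again, restricted to epimorphisms, so your reduction is circular; the real difficulty (no additive inverses, so $fx=fy$ cannot be turned into $f(x-y)=0$) is left untouched, and the hypothesis that every dagger monomorphism is a kernel is never actually used. A secondary mismatch: Proposition \ref{factorisation} assumes dagger biproducts (needed to form the cokernel pair), which are not among the hypotheses of Proposition \ref{monoiffkerzero}; the paper itself gives no proof here but cites \cite[Lemma 2.3]{Heunen2011}, whose argument needs neither biproducts nor the factorisation.

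Here is how the converse actually goes, and where each hypothesis earns its keep. Note first that all kernels and cokernels exist: $\ker f$ is the dagger equalizer of $f$ and $0$, and $\cok(h):=(\ker(h^\dagger))^\dagger$. Suppose $\ker f=0$ and let $x,y\colon X\to A$ satisfy $fx=fy$. Let $m$ be the dagger equalizer of $x^\dagger,y^\dagger\colon A\to X$ and put $q:=m^\dagger\colon A\to Q$; applying $\dagger$ to the universal property shows $q$ is a coequalizer of $x$ and $y$, and it is a dagger epimorphism since $qq^\dagger=m^\dagger m=1$. Because $fx=fy$, we get $f=gq$ for some $g$; then $k:=\ker q$ satisfies $fk=gqk=0$, so $k$ factors through $\ker f=0$, giving $k=0$ and hence (as above) $\ker q=0$. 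Now $q^\dagger$ is a dagger monomorphism, hence \emph{by hypothesis} a kernel, and any kernel $n$ satisfies $n=\ker(\cok(n))$; but $\cok(q^\dagger)=(\ker q)^\dagger=0$ and $\ker(0)$ is the identity, so $q^\dagger$, and therefore $q$, is invertible, whence $x=y$. This is the step your sketch was missing: one passes to the \emph{coequalizer} of the offending pair, transfers the trivial-kernel condition along the factorisation $f=gq$, and then uses ``dagger mono $\Rightarrow$ kernel'' to recognise $q^\dagger$ from its zero cokernel.
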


\begin{Definition}[Generators and dagger generators]
We say that $X$ is a {\em generator} if whenever $f,g\colon
  A\rightrightarrows B$ are distinct there exists a morphism $x\colon
  X\to A$ with $fx\neq gx$. We say that $X$ is a {\em dagger generator} if $x$ can be chosen to be a dagger monomorphism.
\end{Definition}

\begin{lemma}\label{divisionring}
Let $\mathbf{C}$ be a dagger category with a simple generator $K$, dagger biproducts, dagger equalizers, and the property that each dagger monomorphism is a kernel. Then the semiring $\langle \mathbf{C}(K,K),+,\cdot \rangle$ has additive inverses and so is a division ring.
\end{lemma}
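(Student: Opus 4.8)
The plan is to reduce the whole statement to producing a single additive inverse of the unit $1_K$ in the semiring $R := \mathbf{C}(K,K)$. Once we have an element $-1 \in R$ with $1_K + (-1) = 0$, left distributivity together with $0\cdot\lambda = 0$ gives $\lambda + (-1)\lambda = (1_K + (-1))\lambda = 0$, so every scalar acquires an additive inverse and $R$ becomes a ring; combined with the multiplicative inverses supplied by Lemma \ref{multiplicativeinverse} and the fact that $1_K \neq 0$ (since $K \neq O$), this exhibits $R$ as a division ring.

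To manufacture $-1$ I would extract it from the kernel of the codiagonal $\nabla \colon K \oplus K \to K$, which morally is the antidiagonal $\langle 1, -1\rangle$. First I would note that $\nabla$ is not a monomorphism: the two coprojections $i_1, i_2 \colon K \to K \oplus K$ are distinct (otherwise $1_K = 0$, impossible as $K \neq O$), yet $\nabla i_1 = 1_K = \nabla i_2$. By Proposition \ref{monoiffkerzero} this forces $\ker \nabla \neq 0$; writing $k \colon L \to K \oplus K$ for this kernel, a nonzero dagger monomorphism, we have $L \neq O$ and $\nabla k = 0$.

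Next I would invoke the generator hypothesis to turn this into a relation among scalars. Since $L \neq O$ we have $1_L \neq 0$, so there is a map $x \colon K \to L$ with $x \neq 0$. Setting $a := p_1 k x$ and $b := p_2 k x$ in $R$, where $p_1, p_2$ are the biproduct projections, the identity $\nabla k = 0$ yields $a + b = 0$, while $k$ monic and $x \neq 0$ give $k x \neq 0$, so $a$ and $b$ are not both zero. Because $a + b = 0$ in a commutative monoid, if either of $a,b$ vanished the other would too; hence both are nonzero and therefore invertible by Lemma \ref{multiplicativeinverse}. Left-multiplying $a + b = 0$ by $a^{-1}$ then gives $1_K + a^{-1}b = 0$, so $-1 := a^{-1}b$ is the sought additive inverse of the unit.

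The only delicate point is ensuring that the abstract kernel $k$ yields a usable relation among genuine scalars rather than one living over the auxiliary object $L$; this is precisely what the generator secures, by composing $k$ with a nonzero $x \colon K \to L$ to bring $a + b = 0$ inside $\mathbf{C}(K,K)$. Everything after that is routine semiring manipulation, and in particular no classification of the subobjects of $K \oplus K$ is required.
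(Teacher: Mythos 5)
Your proof is correct and takes essentially the same route as the paper's: both obtain additive inverses from the kernel of the codiagonal $\nabla\colon K\oplus K\to K$, using Proposition \ref{monoiffkerzero} (and the simplicity of $K$) to show this kernel is nonzero, the generator property of $K$ to turn the relation $\nabla k=0$ into an identity $a+b=0$ among scalars, and Lemma \ref{multiplicativeinverse} to invert a nonzero component and produce $-1$. The only differences are presentational: the paper applies the generator directly to a nonzero component of the kernel, whereas you apply it to $1_L\neq 0$ and use monicity of $k$, deducing that both components are nonzero.
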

This proof essentially follows the argument in \cite[Lemma 1]{Heunen2022}
\begin{proof}
Consider the kernel of $\nabla\colon K\oplus K\to K$. This has the
form $(x\oplus y)\Delta\colon A\to K\oplus K$ for some $x,y\colon A\to
K$. Then $x+y = \nabla(x\oplus y)\Delta = 0$. Either (i) $(x\oplus
y)\Delta  \neq 0$ or (ii) $(x\oplus y)\Delta = 0$. We will show that
additive inverses exist when (i) holds and that (ii) is impossible.
\begin{itemize}
\item[(i)]Suppose that $(x\oplus y)\Delta \neq0$ so that $x\neq 0$ or $y\neq 0$; without loss of generality, $x\neq 0$. Since $K$ is a generator, there exists a $z\colon K\to A$ such that $xz\neq 0$ and so the inverse $(xz)^{-1}$ exists as shown in Lemma \ref{multiplicativeinverse}. Now, 
\begin{align*}
1+(yz)\cdot(xz)^{-1} = (xz+yz)\cdot(xz)^{-1} = ((x+y)z)\cdot(xz)^{-1} = (0z)\cdot(xz)^{-1} = 0
\end{align*}
and so $1$ has an additive inverse. It follows that additive inverses exist in general.
\item[(ii)] If $\ker\nabla = 0$ then $\nabla$ is a monomorphism by
  Proposition \ref{monoiffkerzero}. The definition of $\nabla$ tells
  us that $\nabla i=\nabla j$, where $i$ and $j$ are embeddings for
  $K\oplus K$, but then $i=j$. It follows that $f=g$ for any two maps
  $f,g\colon K\to X$ and in particular $1=0$. This contradicts
  the simplicity of $K$ and so $\ker \nabla\neq 0$. \qedhere
\end{itemize}
\end{proof}
The involutive division ring structure on $\mathbf{C}(K,K)$ from Lemma \ref{divisionring} together with the semimodule structure of $\mathbf{C}(K,A)$ over $\mathbf{C}(K,K)$ from Lemma \ref{semimodule} gives the following result:

\begin{cor}\label{functorvect}
Sending $H\mapsto \mathbf{C}(K,H)$ and $f\mapsto \mathbf{C}(K,f)$ defines a functor, 
\begin{align*}
\mathbf{C}(K,-)\colon \mathbf{C}\to \mathbf{Vect}_{\mathbf{C}(K,K)}
\end{align*}
where $\mathbf{Vect}_{\mathbf{C}(K,K)}$ is the category of vector spaces and linear maps over the involutive division ring $\mathbf{C}(K,K)$.
\end{cor}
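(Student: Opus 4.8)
The plan is to assemble the structural facts already in hand and to upgrade the semimodule structure to a genuine vector-space structure. By Lemma~\ref{semimodule} each homset $\mathbf{C}(K,A)$ is a right semimodule over the semiring $\mathbf{C}(K,K)$, and by Lemma~\ref{divisionring} the semiring $\mathbf{C}(K,K)$ is in fact a division ring; in particular the scalar $1$ has an additive inverse $-1$. The first step is therefore to show that each $\mathbf{C}(K,A)$ is itself an abelian group under addition, so that the semimodule becomes a module, that is, a vector space over the division ring $\mathbf{C}(K,K)$.

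For this I would exhibit the additive inverse of $f\in\mathbf{C}(K,A)$ as $f\cdot(-1)=f\circ(-1)$. Using that scalar multiplication is composition and that the zero scalar is the zero morphism, so that $f\cdot 0 = f\circ 0 = 0$, the distributive law from Lemma~\ref{semimodule} gives
\[
f + f\cdot(-1) = f\cdot 1 + f\cdot(-1) = f\cdot\bigl(1+(-1)\bigr) = f\cdot 0 = 0,
\]
whence $f\cdot(-1)$ is the required inverse. Together with the semimodule axioms this makes $\mathbf{C}(K,A)$ a right vector space over $\mathbf{C}(K,K)$.

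Next I would check that for a morphism $f\colon A\to B$ of $\mathbf{C}$ the post-composition map $\mathbf{C}(K,f)\colon g\mapsto f\circ g$ is $\mathbf{C}(K,K)$-linear. Additivity, $f\circ(g+h)=f\circ g+f\circ h$, is exactly the statement that composition is additive in its second argument, which is part of the commutative-monoid enrichment coming from the biproduct. Compatibility with the right action is immediate from associativity of composition, $f\circ(g\cdot\lambda)=f\circ(g\circ\lambda)=(f\circ g)\circ\lambda=(f\circ g)\cdot\lambda$; because the action is on the right while $f$ composes on the left, no commutativity of scalars is needed, and it is precisely this that leaves room for the quaternionic case. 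Functoriality, namely $\mathbf{C}(K,\id_A)=\id$ and $\mathbf{C}(K,f'\circ f)=\mathbf{C}(K,f')\circ\mathbf{C}(K,f)$, is the standard behaviour of the covariant hom-functor.

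The argument is essentially bookkeeping over the earlier lemmas; the only point carrying genuine content is the passage from semimodule to module, and even there the work reduces to observing that composing with the zero scalar yields the zero morphism. I expect no real obstacle, the main thing to watch being the left/right bookkeeping of the action, so that $\mathbf{C}(K,f)$ is verified to be right-linear rather than merely additive.
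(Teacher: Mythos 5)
Your proposal is correct and follows the paper's intended argument: the paper gives no separate proof, stating the corollary as an immediate consequence of the semimodule structure from Lemma~\ref{semimodule} and the division ring structure from Lemma~\ref{divisionring}. Your write-up simply makes explicit the routine verifications left implicit there — passing from semimodule to module via $-f := f\cdot(-1)$ (using $f\cdot 0 = 0$ and distributivity), and checking that post-composition $\mathbf{C}(K,f)$ is additive, compatible with the right action by associativity of composition (correctly noting that no commutativity of scalars is needed), and functorial.
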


\section{Orthogonality}
For this section we assume that $\mathbf{C}$ is a dagger category with a simple generator $K$, dagger biproducts, dagger equalizers, and the property that each dagger monomorphism is a kernel.
Partway through the section, we shall introduce our final
  assumption, involving directed colimits of dagger monomorphisms.

\begin{Definition}[Hermitian Form]
Let $\mathbb{K}$ be a division ring with involution. For a right
$\mathbb{K}$-vector space $\mathcal{H}$, a \textit{Hermitian form} on
$\mathcal{H}$ is a sesquilinear map $\langle \cdot,\cdot \rangle\colon
\mathcal{H}\times \mathcal{H} \to \mathbb{K}$ which is conjugate
symmetric and non-singular: if $\langle u,v \rangle = 0$ for all
  $v\in V$ then $u=0$. A (right) $\mathbb{K}$-vector space with a Hermitian form is called a \textit{(right) $\mathbb{K}$-Hermitian space}. If $\mathcal{S}$ is a subset of a $\mathbb{K}$-Hermitian space $\mathcal{H}$, we can form the orthogonal complement
\begin{align*}\mathcal{S}^\perp :=\{x\in \mathcal{H}\mid \langle x,y\rangle=0, y\in \mathcal{S}\}.
\end{align*}
\end{Definition}

\begin{example}
The generator property of $K$ makes the sesquilinear form $\langle\cdot,\cdot\rangle\colon \mathbf{C}(K,A)\times \mathbf{C}(K,A)\to \mathbf{C}(K,K)$ non-singular and so $\mathbf{C}(K,A)$ is a Hermitian space.
\end{example}

\begin{Definition}[Closed subspace]
A subspace $\mathcal{F}$ of a Hermitian space $\mathcal{H}$ is said to be \textit{closed} when $(\mathcal{F}^\perp)^\perp = \mathcal{F}$.
We let $\mathrm{Sub}_C(\mathcal{H})$ denote the poset of all closed subspaces of a Hermitian space~$\mathcal{H}$. 
\end{Definition}

\begin{Definition}[Orthomodular]
A Hermitian space $\mathcal{H}$ said to be \textit{orthomodular} if for any closed subspace $\mathcal{F}\subseteq \mathcal{H}$ we have $\mathcal{H}=\mathcal{F}\oplus \mathcal{F}^\perp$. Note that there is always an injective map $\mathcal{F}\oplus \mathcal{F}^\perp \to \mathcal{H}$.
\end{Definition} 

\begin{Definition}[Dagger Subobject]
Let $A$ be an object of a dagger category $\mathbf{C}$. A \textit{subobject} of $A$ is an isomorphism class of monomorphisms with codomain $A$. A \textit{dagger subobject} of $A$ is a subobject of $A$ which contains a dagger monomorphism. 
\end{Definition}
\begin{lemma}\label{pullbackstability} 
Dagger monomorphisms are stable under pullback.
\end{lemma}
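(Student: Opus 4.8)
The plan is to exploit hypothesis (K)---that every dagger monomorphism is a kernel---together with the elementary fact that kernels are stable under pullback, realising the pullback \emph{concretely} as a kernel so that it is automatically a dagger monomorphism. The point is that the naive construction of the pullback (as the dagger equalizer $j\colon P\to C\oplus A$ of $fp$ and $mq$ inside the biproduct, with legs $pj$ and $qj$) does not visibly produce a dagger monomorphism: the relevant leg $pj$ is a composite of the dagger equalizer $j$ with the projection $p$, which is only a dagger \emph{epi}morphism. Representing $m$ as a kernel circumvents this.

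First I would set up the data. Let $m\colon A\to B$ be a dagger monomorphism and $f\colon C\to B$ an arbitrary morphism along which we pull back. By (K) we may write $m=\ker e$ for some $e\colon B\to D$; in particular $em=0$, and $m$ has the universal property of the kernel of $e$. Next I would construct the candidate. Form $ef\colon C\to D$ and let $n\colon P\to C$ be its kernel; by (E) this kernel, being the dagger equalizer of $ef$ and $0$, may be taken to be a dagger monomorphism. Since $efn=0$ and $em=0$, the morphism $fn$ satisfies $e(fn)=0$ and so factors uniquely as $fn=mg$ through $m=\ker e$, yielding a commuting square with legs $n\colon P\to C$ and $g\colon P\to A$.

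Then I would check this square is a pullback by a routine diagram chase. Given a competing cone $u\colon Z\to C$ and $v\colon Z\to A$ with $fu=mv$, the identity $efu=emv=0$ forces $u$ to factor through $n=\ker(ef)$, say $u=nt$ for a unique $t$; the second equation $gt=v$ then follows from $m(gt)=(mg)t=(fn)t=fu=mv$ together with $m$ being monic, and the uniqueness of $t$ follows since $n$ is monic. Hence $n$ is the leg, parallel to $m$, of a pullback of $m$ along $f$, and $n$ is a dagger monomorphism. The only inputs are (K) and (E).

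The main subtlety to flag is that being a dagger monomorphism is invariant only under \emph{unitary} isomorphism, not arbitrary isomorphism, so the conclusion should be read at the level of dagger subobjects: the argument exhibits a particular pullback whose relevant leg is a dagger monomorphism, which is exactly the form needed for the orthogonality and orthomodularity arguments that follow in this section. I do not expect a genuine obstacle beyond correctly identifying which leg of the pullback square is being claimed dagger monic and confirming that the chosen kernel can indeed be taken as a dagger equalizer via (E).
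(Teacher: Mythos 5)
Your proof is correct, but it takes a genuinely different route from the paper's. You invoke axiom (K) to present the dagger monomorphism $m$ as a kernel $m=\ker e$, and then realise the pullback of $m$ along $f$ concretely as $n=\ker(ef)$, chosen via (E) to be a dagger monomorphism; your diagram chase verifying the universal property is complete and correct, as is your closing remark that the conclusion lives at the level of dagger subobjects (which is exactly how the lemma is used for meets in $\mathrm{Sub}_\dagger(H)$). The paper avoids (K) altogether: it uses the observation that a dagger monomorphism $f\colon A\to H$ is \emph{automatically} the dagger equalizer of the pair $1_H$ and $ff^\dagger$ --- the equalizing pair is manufactured from $f$ itself via the dagger, with no appeal to an unspecified $e$ --- and then takes the pullback leg to be the dagger equalizer of $g$ and $ff^\dagger g$, which exists by (E). Since (K) is a standing hypothesis of the section, your use of it is perfectly legitimate; what the paper's argument buys is economy of hypotheses (only the dagger and (E) are needed, so the stability result holds in any dagger category with dagger equalizers) and a canonical choice of equalizing pair, whereas your argument buys the conceptually transparent slogan that kernels are stable under pullback, at the cost of consuming (K) and the zero morphisms from (B).
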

\begin{proof}
If $f\colon A\to H$ is a dagger monomorphism then it is the dagger equalizer of $1_H$ and $f f^\dagger$. Let $g\colon B\to H$ be any morphism and let $f'\colon P\to B$ be the dagger equalizer of $g$ and $f f^\dagger g$. Then there exists a unique morphism $g'\colon P\to M$ such that
\begin{equation*}
\xy*!C\xybox{\xymatrix @R+0pc @C+0pc{
P\ar@{.>}[r]^{g'}\ar[d]_{f'}&A\ar[d]^{f}&\\
B\ar[r]_{g}&H\ar@<+0.5ex>[r]^{f f^\dagger}\ar@<-0.5ex>[r]_{1_H}&H
}}\endxy
\end{equation*}
commutes, and now the square is a pullback.
\end{proof}

\begin{Definition}[Ortholattices]

An \textit{ortholattice} is a partially ordered set $(L,\leq)$ equipped with an involutive order isomorphism $\perp\colon  L^{\mathrm{op}}\to L\colon a\mapsto a^\perp$ satisfying the following conditions: 

\begin{itemize}
\item[(i)] The poset $L$ has finite meets (and so also finite joins);

\item[(ii)] The poset $L$ has a top element $1$ (and so also a bottom element $0$);

\item[(iii)] For each $a\in L$, $a\vee a^\perp = 1$ (and so also $a\wedge a^\perp = 0$).
\end{itemize}
\end{Definition}

\begin{example}
Given an object $H$ in $\mathbf{C}$, the poset $\langle
\mathrm{Sub}_C(\mathbf{C}(K,H)),\subseteq \rangle$ is an ortholattice
where each $\mathcal{F}\in \mathrm{Sub}_C(\mathbf{C}(K,H))$ has orthocomplement $\mathcal{F}^\perp :=\{ f\in \mathbf{C}(K,H)\mid g^\dagger f = 0,\, g\in \mathcal{F} \}$.
\end{example}

A dagger subobject of an object $H$ has a representative dagger monomorphism. Denote the collection of all dagger subobjects by $\mathrm{Sub}_\dagger(H)$. The usual ordering in $\mathrm{Sub}(H)$ restricts to $\mathrm{Sub}_\dagger(H)$: for dagger subobjects $f\colon A\to H$ and $g\colon B\to H$ we have $f\leq g$ if and only if there exists a morphism $h\colon A\to B$ (necessarily a dagger monomorphism) such that $f=g h$.
 \\
 \\
 The following Lemma is based on results found in \cite{Heunen2011}.

\begin{lemma}
$\langle \mathrm{Sub}_\dagger(H),\leq \rangle$ is an ortholattice with $f^\perp = \ker(f^\dagger)\colon A^\perp \to H$ for each $f\in \mathrm{Sub}_\dagger{H}$.
\end{lemma}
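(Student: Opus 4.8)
The plan is to exhibit the three defining features of an ortholattice on $\mathrm{Sub}_\dagger(H)$ in turn: the order-reversing involution $\perp$, finite meets and joins, and the law $f\vee f^\perp=1$. Throughout I would use that every dagger subobject has a canonical representative dagger monomorphism $f\colon A\to H$ (such maps are split by $f^\dagger$), together with the standing assumptions that dagger monomorphisms are kernels and are stable under pullback (Lemma \ref{pullbackstability}). First I would check that $\perp$ is well defined and order-reversing. Since $f^\perp=\ker(f^\dagger)$ is the dagger equalizer of $f^\dagger$ and $0$, it is a dagger monomorphism and so names a genuine dagger subobject. If $f\leq g$, say $f=gh$, then $g^\dagger x=0$ forces $f^\dagger x=h^\dagger g^\dagger x=0$, so $\ker(g^\dagger)$ factors through $\ker(f^\dagger)$; that is, $g^\perp\leq f^\perp$.

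The crux is involutivity $f^{\perp\perp}=f$, which I would obtain from an orthogonal decomposition. Writing $k:=f^\perp=\ker(f^\dagger)\colon A^\perp\to H$, form the copairing $w:=[f\mid k]\colon A\oplus A^\perp\to H$. Using $f^\dagger f=1$, $k^\dagger k=1$ and $f^\dagger k=0$ (the last because $k$ factors through $\ker f^\dagger$), a matrix computation gives $w^\dagger w=1$, so $w$ is a dagger monomorphism. On the other side $\ker(w^\dagger)=0$: if $f^\dagger x=0$ and $k^\dagger x=0$ then $x=ky$ for some $y$, whence $y=k^\dagger ky=k^\dagger x=0$ and $x=0$; by Proposition \ref{monoiffkerzero} $w^\dagger$ is monic, so $w$ is epic. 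A split monomorphism that is also epic is an isomorphism, so $w$ is unitary and $ww^\dagger=ff^\dagger+kk^\dagger=1_H$. Now $f^{\perp\perp}=\ker(k^\dagger)$, and $k^\dagger x=0\iff kk^\dagger x=0\iff ff^\dagger x=x$, the last equivalence using $ff^\dagger+kk^\dagger=1_H$. Since $f$ is the dagger equalizer of $1_H$ and $ff^\dagger$ (as in the proof of Lemma \ref{pullbackstability}), this is exactly the condition that $x$ factor through $f$; hence $f^{\perp\perp}$ and $f$ are the same subobject, and $\perp\colon L^{\mathrm{op}}\to L$ is an involutive order isomorphism.

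For the lattice operations I would use that $\mathbf{C}$ has pullbacks (finite products from the biproducts, together with the equalizers of assumption (E)), and that by Lemma \ref{pullbackstability} and closure under composition the pullback of two dagger monomorphisms is again one; this realizes the intersection as the meet in $\mathrm{Sub}_\dagger(H)$, with $1_H$ as top and $0\colon O\to H$ as bottom. Joins are then forced by De Morgan duality, $f\vee g:=(f^\perp\wedge g^\perp)^\perp$, which is the least upper bound precisely because $\perp$ is an order-reversing involution. Finally, for condition (iii): the pullback of $f$ and $f^\perp$ is the zero object, since $fa=kb$ gives $a=f^\dagger fa=f^\dagger kb=0$ and hence $b=0$; thus $f\wedge f^\perp=0$, and applying $\perp$ with involutivity yields $f\vee f^\perp=(f^\perp\wedge f)^\perp=0^\perp=1$, where $0^\perp=\ker(0^\dagger)=1_H$.

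The main obstacle is this involutivity clause, as it is what upgrades $\perp$ to an order isomorphism and, through De Morgan, underwrites both the construction of joins and the orthomodular-style law $f\vee f^\perp=1$. Everything hinges on producing the unitary $w\colon A\oplus A^\perp\cong H$, which simultaneously delivers the projection identity $ff^\dagger+f^\perp(f^\perp)^\dagger=1_H$ needed to identify $\ker\bigl((f^\perp)^\dagger\bigr)$ with $f$; the remaining verifications (order-reversal, meets via pullback, top and bottom, joins via duality) are routine given the standing hypotheses.
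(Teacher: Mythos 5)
Your proof is correct, but at the crucial step --- involutivity of $\perp$ --- you take a genuinely different route from the paper. The paper disposes of $f^{\perp\perp}=f$ in one line: $\ker(\ker(f^\dagger)^\dagger)=\ker(\cok(f))=f$, using axiom (K) directly, since a dagger monomorphism is a kernel and any kernel is the kernel of its own cokernel. You instead build the unitary copairing $w=[f\mid f^\perp]\colon A\oplus A^\perp\to H$, proving $w^\dagger w=1$ by a matrix computation and $ww^\dagger=1$ by showing $\ker(w^\dagger)=0$ (via Proposition~\ref{monoiffkerzero}, so (K) still enters, just indirectly) and cancelling against the split. What this buys you is the projection identity $ff^\dagger+f^\perp f^{\perp\dagger}=1_H$, from which involutivity follows by identifying $\ker(f^{\perp\dagger})$ with the dagger equalizer of $1_H$ and $ff^\dagger$; in effect you prove the paper's \emph{next} lemma (the dagger biproduct decomposition $H\cong M\oplus M^\perp$) first and derive involutivity as a corollary, whereas the paper gets involutivity cheaply and then records the decomposition afterwards. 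Your version is longer but more self-contained and constructive; the paper's is shorter but leans more abstractly on the kernel--cokernel duality. One small caution: your biconditional $kk^\dagger x=0\iff ff^\dagger x=x$ should not be justified by subtracting, since at this stage the enrichment is only in commutative monoids; but both directions in fact hold without cancellation ($k^\dagger x=0$ gives $x=ff^\dagger x+kk^\dagger x=ff^\dagger x$, and conversely $ff^\dagger x=x$ gives $kk^\dagger x=k(k^\dagger f)f^\dagger x=0$), and only the forward direction is needed anyway. The remaining verifications --- order-reversal of $\perp$, meets as pullbacks of dagger monomorphisms via Lemma~\ref{pullbackstability}, $f\wedge f^\perp=0$ by the computation $p=f^\dagger fp=f^\dagger f^\perp q=0$, and joins by De Morgan duality --- coincide with the paper's.
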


\begin{proof} 
We first show that $(-)^\perp\colon \mathrm{Sub}_\dagger(H)\to
\mathrm{Sub}_\dagger(H)^{op}\colon f\mapsto f^\perp$ is an involutive
order isomorphism. Suppose $f\leq g$, so that there exists an $h$ such that $f=g h$. Since $f^\dagger g^\perp = (h^\dagger g^\dagger)  \ker(g^\dagger) = 0$, there exists a unique $k$ for which $g^\perp = \ker(f^\dagger)  k = f^\perp  k$ and hence $g^\perp\leq f^\perp$. Let $f\in \mathrm{Sub}_\dagger(H)$, then $(f^\perp)^\perp = \ker(\ker(f^\dagger)^\dagger) = \ker(\cok(f))= f$, since all dagger monomorphisms are (dagger) kernels. 

The bottom of $\mathrm{Sub}_\dagger(H)$ is the zero morphism $0\colon
O \to H$ and the top is the identity $1\colon H\to H$. The meet of $f,g\in \mathrm{Sub}_\dagger(H)$ is the pullback map $f\wedge g$ defined as
\begin{equation*}
\xy*!C\xybox{\xymatrix @R+0pc @C+0pc{
A\wedge B \ar[r]\ar[d]\ar@{.>}[dr]^{f\wedge g}&A\ar[d]^{f}\\
B\ar[r]_{g}&H
}}\endxy
\end{equation*}
which we we can take to be a dagger subobject by Lemma~\ref{pullbackstability}. Finally, consider $f\wedge f^\perp$: this is the pullback
\begin{equation*}
\xy*!C\xybox{\xymatrix @R+0pc @C+0pc{
A\wedge A^\perp\ar[r]^{p}\ar[d]_{q}\ar@{.>}[dr]^{f\wedge f^\perp}&A\ar[d]^{f}\\
A^\perp\ar[r]_{f^\perp}&H.
}}\endxy
\end{equation*}
Since $fp = f^\perp q$ we have $p = f^\dagger f p= f^\dagger f^\perp q
= f^\dagger \ker(f^\dagger ) q = 0$. Therefore $f\wedge f^\perp = 0 $ and dually $f\vee f^\perp  = 1$.
\end{proof}

If we want $\mathrm{Sub}_{\dagger}(H)$ to be complete as a lattice we need additional structure.

\begin{Definition}[Directed Colimit]
Let $\mathbf{C}$ be a category. A \textit{directed colimit} is a colimit of a functor $D\colon \mathscr{J}\to \mathbf{C}$ where $\mathscr{J}$ is a directed poset.
\end{Definition}

We now also assume the wide subcategory $\mathbf{C}_\mathrm{dm}$ of dagger monomorpisms of $\mathbf{C}$ has directed colimits.

\begin{lemma}
The ortholattice $\mathrm{Sub}_\dagger(H)$ is complete for each $H\in\mathbf{C}$.
\end{lemma}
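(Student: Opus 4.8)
The plan is to show that arbitrary joins exist in $\mathrm{Sub}_\dagger(H)$; arbitrary meets then follow from the orthocomplement duality, since the order-reversing involution $(-)^\perp$ carries a join to a meet (so $\bigwedge_i a_i = (\bigvee_i a_i^\perp)^\perp$), and a poset with all joins is a complete lattice. So I fix a family $\{f_i\colon A_i\to H\}_{i\in I}$ of dagger subobjects and let $\mathscr{J}$ be the poset of finite subsets of $I$ ordered by inclusion; this is directed, since the union of two finite sets is again finite.

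First I would assemble a directed diagram $D\colon \mathscr{J}\to \mathbf{C}_{\mathrm{dm}}$ out of finite joins. The previous lemma supplies finite joins in $\mathrm{Sub}_\dagger(H)$, so for each finite $F\subseteq I$ there is a dagger subobject $\bigvee_{i\in F} f_i\colon A_F\to H$; for an inclusion $F\subseteq F'$ the relation $\bigvee_{i\in F} f_i\leq \bigvee_{i\in F'} f_i$ is witnessed by a comparison map $A_F\to A_{F'}$ which, as noted earlier, is necessarily a dagger monomorphism. These comparison maps assemble into a functor $D$ landing in $\mathbf{C}_{\mathrm{dm}}$, and the maps $\bigvee_{i\in F} f_i\colon A_F\to H$ are themselves dagger monomorphisms, so they constitute a cocone on $D$ lying entirely inside $\mathbf{C}_{\mathrm{dm}}$.

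Next I would invoke the standing assumption that $\mathbf{C}_{\mathrm{dm}}$ has directed colimits to obtain a colimit $A_\infty$ with coprojections $\iota_F\colon A_F\to A_\infty$ in $\mathbf{C}_{\mathrm{dm}}$. Since the cocone $\bigl(\bigvee_{i\in F} f_i\bigr)_F$ lives in $\mathbf{C}_{\mathrm{dm}}$, the induced mediating map $f_\infty\colon A_\infty\to H$ is itself a morphism of $\mathbf{C}_{\mathrm{dm}}$, hence a dagger monomorphism and a genuine element of $\mathrm{Sub}_\dagger(H)$. This is the step I expect to be the crux: the only real obstacle is ensuring the colimit yields a dagger subobject of $H$ rather than merely a monomorphism, and it is resolved precisely by carrying out the whole construction inside $\mathbf{C}_{\mathrm{dm}}$, so that no cocone we use ever leaves that subcategory. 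It then remains to verify that $f_\infty$ is the join. It is an upper bound, since $f_i\leq \bigvee_{i\in F} f_i = f_\infty\,\iota_F$ for any $F\ni i$. For minimality, given any upper bound $g\colon B\to H$ of the whole family, each finite join satisfies $\bigvee_{i\in F} f_i\leq g$ because finite joins are least upper bounds of finite subfamilies; the witnessing dagger monomorphisms $A_F\to B$ therefore form a cocone on $D$ in $\mathbf{C}_{\mathrm{dm}}$, and the universal property produces $h_\infty\colon A_\infty\to B$ with $f_\infty = g\,h_\infty$, whence $f_\infty\leq g$. Thus $f_\infty = \bigvee_{i\in I} f_i$, arbitrary joins exist, and the completeness of the ortholattice $\mathrm{Sub}_\dagger(H)$ follows.
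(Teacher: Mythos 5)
Your proof is correct and takes essentially the same route as the paper's: both reduce completeness to finite joins (already available from the ortholattice structure) plus directed joins supplied by axiom (C), your directed diagram of finite joins with its mediating dagger monomorphism being exactly the concrete unpacking of that reduction. The paper compresses this into one line by noting that $\mathrm{Sub}_\dagger(H)$ is equivalent to the slice $\mathbf{C}_{\mathrm{dm}}/H$, which inherits directed colimits from $\mathbf{C}_{\mathrm{dm}}$.
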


\begin{proof}
We know that $\mathrm{Sub}_{\dagger}(H)$ has finite joins, so it
will suffice to show that it has directed joins. But $\mathbf{C}_\mathrm{dm}$ has
directed joins and so the slice category $\mathbf{C}_\mathrm{dm}/H$
has directed joins, formed as in $\mathbf{C}_\mathrm{dm}$, and
$\mathbf{C}_\mathrm{dm}/H$ is equivalent to
$\mathrm{Sub}_{\dagger}(H)$.
\end{proof}

\begin{theorem}\label{subtoclosed}
Let $H$ be an object in $\mathbf{C}$. The function
\begin{align*}
\varphi\colon  \mathrm{Sub}_\dagger(H)&\to \mathrm{Sub}_C(\mathbf{C}(K,H))\\
( f\colon A\to H)&\mapsto \varphi(f) := \{ fa\mid a\in \mathbf{C}(K,A)\}
\end{align*}
is an isomorphism of ortholattices. 
\end{theorem}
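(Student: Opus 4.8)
The plan is to verify that $\varphi$ is an order isomorphism which preserves the orthocomplement; since an order isomorphism between lattices automatically preserves all meets and joins, this suffices to make it an isomorphism of ortholattices. Concretely I would establish four things: that each $\varphi(f)$ is genuinely a closed subspace (so $\varphi$ is well defined into $\mathrm{Sub}_C$); that $\varphi$ preserves and reflects the order; that $\varphi$ preserves $(-)^\perp$; and that $\varphi$ is surjective. Well-definedness and orthocomplement-preservation come out of the same computation, injectivity is free from order-reflection, and surjectivity is the real work.

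First I would compute $\varphi(f)^\perp$ directly. Writing $f\colon A\to H$, an element $w\in\mathbf{C}(K,H)$ lies in $\varphi(f)^\perp$ iff $(fa)^\dagger w = a^\dagger f^\dagger w = 0$ for all $a\in\mathbf{C}(K,A)$; non-singularity of the form on $\mathbf{C}(K,A)$ (valid because $K$ is a generator) reduces this to $f^\dagger w = 0$. Since $f^\perp = \ker(f^\dagger)$ and the hom-functor $\mathbf{C}(K,-)$ preserves the kernel, the subspace $\{w : f^\dagger w = 0\}$ is exactly $\varphi(f^\perp)$, so $\varphi(f)^\perp = \varphi(f^\perp)$. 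Applying this twice and using $(f^\perp)^\perp = f$ from the preceding lemma gives $(\varphi(f)^\perp)^\perp = \varphi(f)$, so every $\varphi(f)$ is closed and $\varphi$ preserves orthocomplements simultaneously.

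Next, order preservation is immediate: $f = gh$ gives $\varphi(f)=\{gha\}\subseteq\varphi(g)$. For reflection, suppose $\varphi(f)\subseteq\varphi(g)$ with $g\colon B\to H$ and set $h:=g^\dagger f$. For every $x\colon K\to A$ we have $fx\in\varphi(f)\subseteq\varphi(g)$, so $fx=gy$ for some $y$, whence $ghx = gg^\dagger gy = gy = fx$; as $K$ is a generator this forces $gh=f$, so $f\le g$. The same generator philosophy, together with the factorisation of Proposition \ref{factorisation}, yields the building block for surjectivity: every nonzero $h\colon K\to H$ factors as $h=me$ with $m$ a dagger monomorphism and $e\colon K\to X$ an epimorphism, and then $e^\dagger$ is a nonzero monomorphism into the simple object $K$, hence an isomorphism; so $e$ is an isomorphism and $\varphi(m)=\{m(e\lambda):\lambda\in\mathbf{C}(K,K)\}= h\mathbf{C}(K,K)$. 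In other words every ``line'' $h\mathbf{C}(K,K)$ lies in the image of $\varphi$.

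The main obstacle is surjectivity, and I would avoid pushing joins through $\varphi$ directly by exploiting orthogonality. Given a closed subspace $\mathcal F$, form in the complete lattice $\mathrm{Sub}_\dagger(H)$ the joins $f:=\bigvee_{h\in\mathcal F\setminus\{0\}} m_h$ and $g:=\bigvee_{h'\in\mathcal F^\perp\setminus\{0\}} m_{h'}$, where $m_h$ is the line-subobject above. Order preservation gives $\varphi(f)\supseteq\mathcal F$ and $\varphi(g)\supseteq\mathcal F^\perp$. For each such $h,h'$ the subspaces $\varphi(m_h)\subseteq\mathcal F$ and $\varphi(m_{h'})\subseteq\mathcal F^\perp$ are orthogonal, so $\varphi(m_h)\subseteq\varphi(m_{h'})^\perp=\varphi(m_{h'}^\perp)$, and order reflection yields $m_h\le m_{h'}^\perp$; taking joins and using that $(-)^\perp$ sends joins to meets gives $f\le g^\perp$. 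Hence $\varphi(f)\subseteq\varphi(g)^\perp\subseteq(\mathcal F^\perp)^\perp=\mathcal F$, and combined with $\varphi(f)\supseteq\mathcal F$ this gives $\varphi(f)=\mathcal F$. Together with injectivity (from order reflection) this makes $\varphi$ an order isomorphism, and with the orthocomplement-preservation of the second step an isomorphism of ortholattices. I expect the delicate points to be the line lemma—specifically the use of simplicity to upgrade $e$ to an isomorphism—and confirming that completeness of $\mathrm{Sub}_\dagger(H)$ legitimately supplies the joins $f$ and $g$.
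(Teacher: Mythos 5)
Your proposal is correct, and its first half coincides with the paper's argument: you compute $\varphi(f)^\perp=\varphi(f^\perp)$ via the generator property and $f^\perp=\ker(f^\dagger)$, deduce closedness from $f^{\perp\perp}=f$, and prove order preservation and reflection (your mediating map $h=g^\dagger f$ is exactly the one the paper extracts from the identity $f=gg^\dagger f$). Where you genuinely diverge is surjectivity. The paper takes $m$ to be the join of \emph{all} dagger subobjects $m_i$ with $\varphi(m_i)\subseteq\mathcal{F}$ and sandwiches $\mathcal{F}\subseteq\varphi(m)\subseteq\mathcal{F}^{\perp\perp}$, the upper bound coming from a hands-on dagger computation: factor each $y\in\mathcal{F}^\perp$ as an epimorphism followed by a dagger monomorphism $n$, show $n^\dagger m_i=0$ so that $m_i\le n^\perp$ for all $i$, hence $m\le n^\perp$ and $y^\dagger m=0$. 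You instead prove a ``line lemma'': for nonzero $h\colon K\to H$ the factorisation $h=m_h e$ of Proposition~\ref{factorisation} has $e$ invertible, because $e^\dagger$ is a nonzero monomorphism into the simple object $K$ (the Schur-style argument of Lemma~\ref{multiplicativeinverse}), so $\varphi(m_h)=h\,\mathbf{C}(K,K)$; then, joining lines over $\mathcal{F}$ and over $\mathcal{F}^\perp$ separately, you replace the paper's computation with pure lattice theory—pairwise orthogonality plus order reflection gives $m_h\le m_{h'}^\perp$, and De Morgan in the complete ortholattice $\mathrm{Sub}_\dagger(H)$ gives $f\le g^\perp$, whence $\varphi(f)\subseteq\varphi(g)^\perp\subseteq\mathcal{F}^{\perp\perp}=\mathcal{F}$. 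The trade-off is that your route consumes the simplicity of $K$, which the paper's surjectivity step never touches (it uses only the generator property, factorisation, the kernel axiom and completeness); since simplicity is a standing hypothesis of the section, nothing is lost in context, and in exchange you isolate all dagger computations in one lemma and obtain the sharper observation that every closed subspace is the image of a join of lines. The points you flag as delicate do hold: a nonzero monomorphism into a simple object cannot represent the zero subobject (its domain would be a zero object), so it is invertible; if $e^\dagger$ has inverse $w$ then $w^\dagger$ inverts $e$; and your joins are set-indexed joins in the complete lattice $\mathrm{Sub}_\dagger(H)$, which is exactly the completeness the paper itself invokes (with the empty-join convention handling $\mathcal{F}=\{0\}$ or $\mathcal{F}^\perp=\{0\}$).
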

\begin{proof}
We first show that $\varphi$ preserves orthocomplements:
\begin{align*}
x\in\varphi(f)^\perp &\iff \forall a\in \mathbf{C}(K,A): \langle fa,x \rangle = x^\dagger fa= 0 \\
&\iff x^\dagger f = 0 &(\text{generator property of $K$})\\
&\iff f^\dagger x = 0\\
&\iff \exists! b : x = \ker(f^\dagger) h = f^\perp b\\
&\iff x\in \varphi(f^\perp)
\end{align*}
hence $\varphi(f)^\perp = \varphi(f^\perp)$. It follows that $\varphi(f)^{\perp\perp} =\varphi(f^\perp)^\perp = \varphi(f^{\perp\perp}) = \varphi(f)$ and so $\varphi(f)$ is closed. 

To see that $\varphi$ preserves order, let $f\colon A\to H$ and $g\colon B\to H$ be dagger subobjects of $H$ with $f\leq g$.  Any element of $\varphi(f)$ is of the form $fa$ for $a\in \mathbf{C}(K,A)$. Since $f=gh$ for some $h\colon A\to B$, we have $ha\in \mathbf{C}(K,B)$ and so $fa=gha\in \varphi(g)$, hence $\varphi(f)\subseteq\varphi(g)$. To see that $\varphi$ reflects order, let $\varphi(f)\leq\varphi(g)$. For any $x\colon K\to H$ we have $ff^\dagger x\in \varphi(f)$, and so $ff^\dagger x\in \varphi(g)$; thus $gg^\dagger ff^\dagger x = ff^\dagger x$. 
The generator property of $K$ means that $ff^\dagger  =gg^\dagger
ff^\dagger $ and so $f=gg^\dagger f$, hence $f\leq g$. Thus $\varphi$ reflects order and is also injective.

To see that $\varphi$ is surjective, suppose that
$\mathcal{F}\subseteq \mathbf{C}(K,H)$ with $\mathcal{F}^{\perp\perp}
= \mathcal{F}$ and let $m$ be the join of all $m_i\colon M_i\to
\mathcal{F}$ with $\varphi(m_i)\subseteq \mathcal{F}$: this is where we
  use completeness of $\mathrm{Sub}_{\dagger}(H)$. We show that $\mathcal{F}\subseteq \varphi(m)\subseteq \mathcal{F}^{\perp\perp}$.
\begin{itemize}
\item[(i)] Let $f\in \mathcal{F}$. By Proposition \ref{factorisation}, $f$ decomposes as,
\begin{equation*}
\xy*!C\xybox{\xymatrix @R+0pc @C+2pc{
K\ar[r]^{e} & X \ar[r]^{n}&H
}}\endxy 
\end{equation*} 
where $n$ is a dagger monomorphism and $e$ is an epimorphism. Then let $y\in \mathcal{F}^\perp$ so then $ e^\dagger n^\dagger y = f^\dagger y =0$, and hence $ n^\dagger y =0$. It follows for each $x\colon K\to X$ that $ (nx)^\dagger y=x^\dagger n^\dagger y =0$ and so $nx\in \mathcal{F}^{\perp\perp} = \mathcal{F}$ for each $x$. It follows that $\varphi(n)\subseteq \mathcal{F}$ and hence $n\leq m$ by definition of $m$. Therefore $f=ne\in \varphi(m)$ and $\mathcal{F}\subseteq \varphi(m)$.

\item[(ii)]
First we show that if  $y\in \mathcal{F}^\perp$ then
    $y^\dagger m=0$. Factorize $y$ as an epimorphism $e$ followed by a
    dagger monomorphism $n$.  Since $m_i x\in \mathcal{F}$
  for each $i$ and each $x\colon K\to M_i$ we have $y^\dagger m_i x = 0$ and so $y^\dagger m_i =e^\dagger n^\dagger
  m_i=0$ by the generator property of $K$, and now $n^{\dagger}m_i=0$ since $e$ is an epimorphism. It
  follows that $m_i= \ker(n^\dagger) h$ for some $h$ and hence
  $m_i\leq n^\perp$ for all $i$. Since $m$ is the join of the $m_i$ it follows
  that $m\leq n^\perp$ and hence $m\perp n$; thus $y^\dagger m =
  e^\dagger n^\dagger m = 0$.

Thus for any $y\in \mathcal{F}^\perp$ and $g\colon K\to M$ we have
$y^{\dagger}mg=0$, and so $mg\in\mathcal{F}^{\perp\perp}$ and
$\varphi(m)\subseteq\mathcal{F}^{\perp\perp}$. 
\qedhere
\end{itemize}
\end{proof}
\begin{lemma}
A dagger subobject $m\colon M\to H$ and its orthocomplement $m^\perp\colon M^\perp\to H$ define a dagger biproduct $H\cong M\oplus M^\perp$ with
\begin{equation*}
\xy*!C\xybox{\xymatrix @R+1pc @C+2pc{
M\ar@<+0.5ex>[r]^{m}&\ar@<+0.5ex>[l]^{m^\dagger}H\ar@<-0.5ex>[r]_{m^{\perp\dagger}}&M^\perp. \ar@<-0.5ex>[l]_{m^{\perp}} 
}}\endxy 
\end{equation*} 
\end{lemma}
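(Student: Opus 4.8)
The plan is to compare $H$ with the dagger biproduct $P := M \oplus M^\perp$ furnished by axiom (B), and to show the comparison map is a unitary isomorphism. Write $\iota_M, \iota_{M^\perp}$ for the injections and $\pi_M = \iota_M^\dagger$, $\pi_{M^\perp} = \iota_{M^\perp}^\dagger$ for the projections of $P$, which satisfy the biproduct equations $\pi_k \iota_l = \delta_{kl}$ and $\iota_M\pi_M + \iota_{M^\perp}\pi_{M^\perp} = 1_P$. Before building the comparison I would record the four orthonormality relations between $m$ and $m^\perp$: since both are dagger monomorphisms we have $m^\dagger m = 1_M$ and $m^{\perp\dagger}m^\perp = 1_{M^\perp}$, and since the previous lemma identifies $m^\perp = \ker(m^\dagger)$ we get $m^\dagger m^\perp = 0$, whence also $m^{\perp\dagger} m = (m^\dagger m^\perp)^\dagger = 0$ by Corollary \ref{daggerdistribution}.

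Next I would define the copairing $u := m\pi_M + m^\perp \pi_{M^\perp}\colon P \to H$ and observe, using that the dagger sends $\pi_k$ to $\iota_k$ and distributes over sums (Corollary \ref{daggerdistribution}), that $u^\dagger = \iota_M m^\dagger + \iota_{M^\perp} m^{\perp\dagger}$. A direct expansion of $u^\dagger u$, substituting the four relations above together with $\iota_M\pi_M + \iota_{M^\perp}\pi_{M^\perp} = 1_P$, collapses to $1_P$; thus $u$ is a dagger monomorphism and therefore represents a dagger subobject of $H$. The same bookkeeping gives $u u^\dagger = m m^\dagger + m^\perp m^{\perp\dagger}$, a self-adjoint idempotent, so proving $u$ unitary amounts exactly to proving this endomorphism equals $1_H$.

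The crucial step is to show that $u$ is in fact the top element $1_H$ of $\mathrm{Sub}_\dagger(H)$. Since $u\iota_M = m$ and $u\iota_{M^\perp} = m^\perp$ with $\iota_M, \iota_{M^\perp}$ dagger monomorphisms, both $m \leq u$ and $m^\perp \leq u$ hold in $\mathrm{Sub}_\dagger(H)$, so $m \vee m^\perp \leq u$. But the ortholattice structure established earlier gives $m \vee m^\perp = 1_H$, the top element, so together with the trivial $u \leq 1_H$ we obtain $u = 1_H$ as dagger subobjects. Hence $u$ is an isomorphism, and being also a dagger monomorphism it is unitary, with $u^\dagger = u^{-1}$ and $u u^\dagger = 1_H$.

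Finally, transporting the dagger biproduct structure of $P$ across the unitary $u$ exhibits $H$ as a dagger biproduct of $M$ and $M^\perp$: the injections become $u\iota_M = m$ and $u\iota_{M^\perp} = m^\perp$, while the projections become $\pi_M u^{-1} = \pi_M u^\dagger = m^\dagger$ and $\pi_{M^\perp} u^\dagger = m^{\perp\dagger}$, matching the displayed diagram. I expect the main obstacle to be precisely the epimorphism content hidden in $u u^\dagger = 1_H$: all the surrounding algebra is routine, and it is only the orthomodular lattice identity $m \vee m^\perp = 1_H$ (resting on completeness of $\mathrm{Sub}_\dagger(H)$ and Theorem \ref{subtoclosed}) that upgrades $u$ from a dagger monomorphism to a unitary isomorphism.
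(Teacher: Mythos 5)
Your proof is correct, but it takes a genuinely different (and heavier) route than the paper, whose entire proof is one line: the lemma ``follows immediately from the fact that $m^\dagger m=1$ and $m^\perp=\ker(m^\dagger)$''. The direct argument the paper is gesturing at verifies the biproduct property on $H$ itself, with no comparison object: the four orthonormality relations you record show that $h:=mf+m^\perp g$ satisfies $m^\dagger h=f$ and $m^{\perp\dagger}h=g$, while uniqueness comes straight from the kernel property of $m^\perp=\ker(m^\dagger)$ (a morphism killed by $m^\dagger$ factors uniquely through $m^\perp$, and $m^{\perp\dagger}$ recovers the factorisation). You instead build the comparison map $u\colon M\oplus M^\perp\to H$ out of the abstract biproduct supplied by (B), check $u^\dagger u=1$ equationally, and then use the ortholattice of dagger subobjects to force $u=1_H$ via $m\vee m^\perp=1_H$. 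This is valid and not circular (the ortholattice lemma precedes this one and does not use it), and it has the virtue of making fully explicit the identity $1_H=mm^\dagger+m^\perp m^{\perp\dagger}$, which is exactly what the paper extracts from this lemma in the proof of Theorem~\ref{theoremorthomodularspace}; the cost is invoking lattice-theoretic machinery where a direct universal-property check suffices. Two citation slips are worth correcting, though neither is a gap: the binary join $m\vee m^\perp=1_H$ needs only the ortholattice lemma itself --- it is the dual of $m\wedge m^\perp=0$ under the involution $(-)^\perp$ --- so, contrary to your closing sentence, neither completeness of $\mathrm{Sub}_\dagger(H)$ nor Theorem~\ref{subtoclosed} is involved; and $m^{\perp\dagger}m=(m^\dagger m^\perp)^\dagger=0$ uses only contravariance of the dagger and $0^\dagger=0$, not Corollary~\ref{daggerdistribution} (which you do use legitimately, later, when computing $u^\dagger$ as a sum).
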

\begin{proof}
This follows immediately from the fact that $m^\dagger m=1$ and $m^\bot=\ker(m^\dagger)$.
\end{proof}

\begin{theorem}\label{theoremorthomodularspace}
$\mathbf{C}(K,H)$ is an orthomodular space.
\end{theorem}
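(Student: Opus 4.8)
The plan is to assemble the two results immediately preceding the statement: the ortholattice isomorphism $\varphi\colon \mathrm{Sub}_\dagger(H)\to\mathrm{Sub}_C(\mathbf{C}(K,H))$ of Theorem~\ref{subtoclosed}, and the lemma identifying a dagger subobject together with its orthocomplement as a dagger biproduct decomposition of $H$. Since $\mathbf{C}(K,H)$ is already a Hermitian space, it remains only to verify the orthomodularity condition: for every closed subspace $\mathcal{F}\subseteq\mathbf{C}(K,H)$ we must exhibit $\mathbf{C}(K,H)=\mathcal{F}\oplus\mathcal{F}^\perp$.

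First I would invoke surjectivity of $\varphi$ to write an arbitrary closed subspace as $\mathcal{F}=\varphi(m)$ for some dagger subobject $m\colon M\to H$; since $\varphi$ preserves orthocomplements, $\mathcal{F}^\perp=\varphi(m^\perp)$, where $m^\perp=\ker(m^\dagger)\colon M^\perp\to H$. The preceding lemma then supplies a dagger biproduct $H\cong M\oplus M^\perp$ with injections $m,m^\perp$ and projections $m^\dagger,m^{\perp\dagger}$. The key ingredient is the biproduct completeness relation $m m^\dagger + m^\perp m^{\perp\dagger}=1_H$, together with the biproduct equations $m^\dagger m=1_M$, $m^{\perp\dagger}m^\perp=1_{M^\perp}$, $m^\dagger m^\perp=0$, and $m^{\perp\dagger}m=0$, the cross terms vanishing by the dagger biproduct axioms ($pq^\dagger=qp^\dagger=0$ with $p=m^\dagger$, $q=m^{\perp\dagger}$).

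Next, for any $x\in\mathbf{C}(K,H)$ I would apply the completeness relation to obtain $x=m(m^\dagger x)+m^\perp(m^{\perp\dagger}x)$, where $m(m^\dagger x)\in\varphi(m)=\mathcal{F}$ and $m^\perp(m^{\perp\dagger}x)\in\varphi(m^\perp)=\mathcal{F}^\perp$; hence $\mathcal{F}+\mathcal{F}^\perp=\mathbf{C}(K,H)$. To see the sum is direct, suppose $z\in\mathcal{F}\cap\mathcal{F}^\perp$ and write $z=ma=m^\perp b$; applying $m^\dagger$ gives $a=m^\dagger m a=m^\dagger z=m^\dagger m^\perp b=0$, so $z=ma=0$. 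Thus $\mathcal{F}\cap\mathcal{F}^\perp=0$, and the same projection identities make the splitting $x=m(m^\dagger x)+m^\perp(m^{\perp\dagger}x)$ unique, yielding $\mathbf{C}(K,H)=\mathcal{F}\oplus\mathcal{F}^\perp$.

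I do not anticipate a genuine obstacle, since the substantive work was carried out in Theorem~\ref{subtoclosed} and the biproduct lemma; the only point needing care is keeping the four biproduct identities straight and confirming that $mm^\dagger+m^\perp m^{\perp\dagger}=1_H$ is indeed available from the dagger biproduct structure. One could alternatively argue directness by transporting the ortholattice meet $\mathcal{F}\wedge\mathcal{F}^\perp=0$ through $\varphi$, but the explicit projection computation is cleaner and entirely self-contained.
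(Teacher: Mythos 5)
Your proposal is correct and follows essentially the same route as the paper: write the closed subspace as $\varphi(m)$ via Theorem~\ref{subtoclosed}, then use the biproduct identity $1_H = mm^\dagger + m^\perp m^{\perp\dagger}$ from the preceding lemma to split each $h\in\mathbf{C}(K,H)$ as $mm^\dagger h + m^\perp m^{\perp\dagger}h$. Your explicit check that the sum is direct (via $m^\dagger m^\perp=0$) is a small elaboration the paper leaves implicit, but it is not a different argument.
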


\begin{proof}
Let $F$ be a closed subspace of $\mathbf{C}(K,H)$. By Theorem \ref{subtoclosed}, there exists a dagger subobject $m\colon M\to H$ such that $\varphi(m)= F$. Observe that for any $h\in \mathbf{C}(K,H)$, $mm^\dagger h\in \varphi(m)$ and $m^\perp m^{\perp\dagger} h\in \varphi(m^\perp)=\varphi(m)^\perp$. It follows from the previous lemma that $1_H= mm^\dagger+m^\perp m^{\perp\dagger}$ and so 
$
h= 1_H h = (mm^\dagger+m^\perp m^{\perp\dagger})h = mm^\dagger h+m^\perp m^{\perp\dagger}h
$. 
Thus $\mathbf{C}(K,H)\cong \varphi(m)\oplus \varphi(m)^\perp$.
\end{proof}

\section{Orthonormal Bases}

We have now seen all of the ingredients in the definition
  of Hilbert category given in the introduction, and for the remainder
  we shall suppose that they hold. This is also a good moment to
  observe that these conditions hold in $\mathbf{Hilb}_\mathbb{K}$.

\begin{example}
The category $\mathbf{Hilb}_{\mathbb{K}}$ is a Hilbert category
  if $\mathbb{K}$ is $\mathbb{R}$, $\mathbb{C}$, or $\mathbb{H}$.
The existence of adjoints for any $f\in \mathbf{Hilb}_\mathbb{K}$ implies (D). The 1-dimensional space $\mathbb{K}$ is a simple generator as in (G), while (B) holds because of the
  0-dimensional space and the usual direct sum of Hilbert spaces. For
bounded linear maps $f,g\colon \mathcal{H}\to\mathcal{K}$ the
embedding of the closed subspace $\{x\in \mathcal{H}\mid
fx=gx\}\subseteq\mathcal{H}$ is an equalizer and so (E) is
satisfied. A dagger monomorphism $f\colon
\mathcal{H}\hookrightarrow\mathcal{K}$ is a kernel of the orthogonal projection $g\colon \mathcal{K}\cong \mathcal{H}\oplus \mathcal{H}^\perp \to \mathcal{H}^\perp$, giving (K). For (C), any directed union of closed subspaces forms a pre-Hilbert space and taking its closure results in a Hilbert space which defines the directed colimit.
\end{example}

To discuss a notion of orthonormal basis for an object $H$ of a Hilbert category $\mathbf{C}$ we first define an object $\ell^2X$ for a set $X$, generalising the usual notion when $\mathbf{C}=\mathbf{Hilb}$. Let $D\colon  \mathrm{Sub}_{\mathrm{fin}}X \to \mathbf{C}_{\mathrm{dm}}$ be the functor from the directed poset $\mathrm{Sub}_\mathrm{fin}X$ of finite subobjects of $X$ to $\mathbf{C}_{\mathrm{dm}}$ which takes a finite subset $A\subseteq X$ to the dagger biproduct $\bigoplus_{a\in A} K$. We can then form the directed colimit $\ell^2 X$ over $D$ with components 
\begin{align*}
\{x_A\colon \bigoplus_{a\in A} K \to \ell^2 X\}_{A\in \mathrm{Sub}_{\mathrm{fin}}X}.
\end{align*}

\begin{lemma}\label{orthonormalsequence}
The family $(x_a\colon  K \to \ell^2X)_{a\in X}$ is an orthonormal
  sequence in $\mathbf{C}(K,\ell^2X)$. Composition with the
    $x_a$ induces a bijection between the set of dagger monomorphisms
    $\ell^2X\to H$ and the set of orthonormal families in
    $\mathbf{C}(K,H)$ indexed by $X$.
\end{lemma}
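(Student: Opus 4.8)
The plan is to treat the two assertions in turn, both resting on the dagger-biproduct relations for $\bigoplus_{a\in A}K$ and on the universal property of the directed colimit defining $\ell^2X$. I write $\iota_a\colon K\to\bigoplus_{c\in A}K$ for the biproduct inclusions, so the projections are $\iota_a^\dagger$, with $\iota_a^\dagger\iota_b=0$ for $a\neq b$, $\iota_a^\dagger\iota_a=1$, and $\sum_{a\in A}\iota_a\iota_a^\dagger=1$. Each cocone component $x_A$ is a morphism of $\mathbf{C}_{\mathrm{dm}}$ and hence a dagger monomorphism, so $x_A^\dagger x_A=1$, and cocone compatibility gives $x_a=x_A\iota_a$ whenever $a\in A$. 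For $a,b\in X$ I choose a finite $A$ containing both and compute $x_b^\dagger x_a=\iota_b^\dagger(x_A^\dagger x_A)\iota_a=\iota_b^\dagger\iota_a$, which is $1$ if $a=b$ and $0$ otherwise; this is the first claim. The proposed bijection sends a dagger monomorphism $u\colon\ell^2X\to H$ to $(ux_a)_{a\in X}$, and since $u^\dagger u=1$ one has $(ux_a)^\dagger(ux_b)=x_a^\dagger x_b=\delta_{ab}$, so the image family is orthonormal and the assignment is well defined.

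For surjectivity I start from an orthonormal family $(y_a)_{a\in X}$ and, for each finite $A$, form the copairing $y_A:=\sum_{a\in A}y_a\iota_a^\dagger\colon\bigoplus_{a\in A}K\to H$. The key computation is $y_A^\dagger y_A=\sum_{a,b}\iota_a(y_a^\dagger y_b)\iota_b^\dagger=\sum_{a\in A}\iota_a\iota_a^\dagger=1$, where orthonormality annihilates the off-diagonal terms; hence each $y_A$ is a dagger monomorphism, i.e.\ a morphism of $\mathbf{C}_{\mathrm{dm}}$. The $y_A$ are compatible with $D$ (restricting a copairing to a sub-block of coordinates), so they form a cocone in $\mathbf{C}_{\mathrm{dm}}$, and the universal property of the directed colimit yields a unique dagger monomorphism $u\colon\ell^2X\to H$ with $ux_A=y_A$; taking $A=\{a\}$ gives $ux_a=y_a$.

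For injectivity, if $ux_a=vx_a$ for all $a$, then decomposing through the biproduct gives $ux_A=\sum_{a\in A}(ux_a)\iota_a^\dagger=\sum_{a\in A}(vx_a)\iota_a^\dagger=vx_A$ for every finite $A$, so $u$ and $v$ induce the same cocone and the uniqueness clause of the universal property forces $u=v$. I expect the main obstacle to be the surjectivity step, specifically the verification that the copairing $y_A$ of an arbitrary orthonormal family is a genuine dagger monomorphism: this is exactly where orthonormality, rather than merely the biproduct structure, is needed, and it is what places $y_A$ in $\mathbf{C}_{\mathrm{dm}}$ so that the colimit's universal property applies and returns a factoring map that is again a dagger monomorphism.
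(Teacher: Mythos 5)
Your proof is correct and takes essentially the same route as the paper's: both characterise the dagger monomorphisms $\bigoplus_{a\in A}K\to H$ as exactly the copairings $y_A=\sum_{a\in A}y_a\iota_a^\dagger$ of orthonormal finite families, observe that these assemble into a cocone over $D$, and invoke the universal property of the directed colimit in $\mathbf{C}_{\mathrm{dm}}$ to obtain the dagger monomorphism $\ell^2X\to H$. You simply spell out the computations (such as $y_A^\dagger y_A=1$, the orthonormality of $(x_a)$, and the injectivity via uniqueness of the mediating map) that the paper's terse proof leaves implicit.
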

\begin{proof}
  Given a finite family $(m_a\colon K\to H)_{a\in A}$ of morphisms
  there is a unique induced $m_A\colon \oplus_{a\in A}K\to H$, and
  $m_A$ is a dagger monomorphism if and only if the family
  $(m_a)_{a\in A}$ is orthonormal. An infinite family $(m_a\colon K\to
  H)_{a\in X}$ is orthonormal if and only if each finite subfamily is
  orthonormal. Given such a family we obtain a dagger monormophism
  $m_A\colon \oplus_{a\in A}K\to H$, and these form a cocone as $A$
  varies, and thus a unique induced dagger monomorphism $m_X\colon\ell^2X\to H$.
\end{proof}

\begin{Definition}[Orthonormal basis]\label{deforthonormalbasis}
Let $\mathbf{C}$ be a Hilbert category. Given an $H\in \mathbf{C}$, an
orthonormal family $(m_x)_{x\in X}$ of morphisms $K\to H$ is
called an \textit{orthonormal basis} for $H$ when the induced
  dagger monomorphism $m_X\colon  \ell^2 X\to H $ is invertible. 
\end{Definition}
Definition \ref{deforthonormalbasis} matches the usual definition of an orthonormal basis for a Hilbert space since when given an orthonormal basis $X$ for a Hilbert space $\mathcal{H}$ we have $\mathcal{H}\cong \ell^2X\cong \overbar{\mathrm{span}X}$. 

For the following proposition we replace the condition (G) from Definition \ref{defhilbertcategory} with the following:
\begin{itemize}
\item[($\mathrm{G}^\dagger$)] $\mathbf{C}$ is equipped with a simple dagger generator $K$.
\end{itemize}
Since a one-dimensional Hilbert space is clearly a simple dagger
generator, this apparently stronger condition holds in
$\mathbf{Hilb}$. We shall see in Section~6 that any Hilbert category is
equivalent to $\mathbf{Hilb}$, and so in the presence of the other
axioms, ($\mathrm{G}^{\dagger}$) is actually not stronger than (G).

\begin{prop}\label{orthonormalbase}
 In a Hilbert category satisfying $\mathrm{(G^{\dagger})}$, any
 object $H$ has an orthonormal basis.
\end{prop}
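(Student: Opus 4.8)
The plan is to run a maximality argument, mirroring the classical proof that every Hilbert space has an orthonormal basis. First I would consider the collection of all orthonormal subsets of the Hermitian space $\mathbf{C}(K,H)$, ordered by inclusion. Orthonormality of a family is a condition on its finite subfamilies only, so the union of a chain of orthonormal subsets is again orthonormal and serves as an upper bound; Zorn's lemma then yields a maximal orthonormal family $(m_a)_{a\in X}$. By Lemma \ref{orthonormalsequence} this family is precisely the data of a dagger monomorphism $m_X\colon \ell^2 X\to H$ with $m_a=m_X x_a$, and to finish it suffices to show that $m_X$ is invertible.

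Since $m_X$ is a dagger monomorphism it is a dagger subobject of $H$; let $m_X^\perp\colon M^\perp\to H$ be its orthocomplement. By the lemma preceding Theorem \ref{theoremorthomodularspace} we obtain a dagger biproduct $H\cong \ell^2 X\oplus M^\perp$ with $1_H = m_X m_X^\dagger + m_X^\perp m_X^{\perp\dagger}$. Thus $m_X$ is a unitary isomorphism as soon as $M^\perp$ is a zero object: then $m_X^\perp m_X^{\perp\dagger}=0$ forces $m_X m_X^\dagger = 1_H$, which together with $m_X^\dagger m_X = 1$ makes $m_X$ invertible. So the proposition reduces to showing $M^\perp\cong O$.

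This is the step where $(\mathrm{G}^\dagger)$ is essential. Suppose for contradiction that $M^\perp$ is not a zero object. Then $1_{M^\perp}\neq 0$, for otherwise every morphism into or out of $M^\perp$ would factor through $1_{M^\perp}$ as the zero map, making $M^\perp$ both initial and terminal, hence a zero object. Applying the dagger-generator property to the distinct parallel maps $1_{M^\perp},0\colon M^\perp\rightrightarrows M^\perp$ produces a dagger monomorphism $x\colon K\to M^\perp$ with $x\neq 0$; being a dagger monomorphism, $x$ satisfies $\langle x,x\rangle = x^\dagger x = 1$. Now set $m_\ast := m_X^\perp x\colon K\to H$. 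As a composite of dagger monomorphisms it is again a dagger monomorphism, so $\langle m_\ast,m_\ast\rangle = 1$, while $m_\ast$ lies in $\varphi(m_X^\perp)=\varphi(m_X)^\perp$ and each $m_a$ lies in $\varphi(m_X)$, giving $\langle m_\ast, m_a\rangle = m_a^\dagger m_\ast = 0$ for all $a$ by Theorem \ref{subtoclosed}. Hence $(m_a)_{a\in X}\cup\{m_\ast\}$ is a strictly larger orthonormal family, contradicting maximality. Therefore $M^\perp\cong O$, $m_X$ is invertible, and $(m_a)_{a\in X}$ is an orthonormal basis for $H$.

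The only genuinely delicate point is the extraction of a \emph{unit} vector from the nonzero object $M^\perp$: this is exactly what $(\mathrm{G}^\dagger)$ buys us, and it is the reason the stronger dagger-generator hypothesis is invoked here rather than the plain generator condition (G), which would yield some nonzero $K\to M^\perp$ but not one of norm $1$. Everything else — the finitary nature of orthonormality guaranteeing the chain hypothesis of Zorn's lemma, and the passage through $\varphi$ to verify orthogonality — is routine given Theorem \ref{subtoclosed} and Lemma \ref{orthonormalsequence}.
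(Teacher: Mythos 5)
Your proof is correct and follows essentially the same route as the paper's: Zorn's lemma yields a maximal orthonormal family, Lemma \ref{orthonormalsequence} packages it as a dagger monomorphism $m_X\colon \ell^2X\to H$, the biproduct identity $1_H=m_Xm_X^\dagger+m_X^\perp m_X^{\perp\dagger}$ reduces everything to $M^\perp\cong O$, and $(\mathrm{G}^\dagger)$ produces the contradiction with maximality. You even spell out a step the paper leaves terse, namely how the dagger-generator property (applied to $1_{M^\perp}\neq 0$ and composed with $m_X^\perp$) yields a \emph{unit} vector orthogonal to the whole family.
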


\begin{proof}
Zorn's lemma tells us that the poset $\langle\mathcal{O},\subseteq\rangle$ of all orthonormal sets in $\mathbf{C}(K,H)$ contains a maximal element since the union of a chain in $\mathcal{O}$ forms an orthonormal set and so each chain in $\mathcal{O}$ has an upper bound. Call this maximal element $X$. We now show that $X$ is in fact a basis for $H$. The universal property of $\ell^2 X$ tells us that there exists a unique dagger monomorphism $m\colon \ell^2X\to H$ determined by the orthonormal family $X\subset \mathbf{C}(K,H)$; we are to show that $m$ is invertible. For $m^\perp := \ker(m^\dagger)\colon \ell^2X^\perp\to H$ we know that,
\begin{equation*}
\xy*!C\xybox{\xymatrix @R+1pc @C+2pc{
\ell^2 X\ar@<+0.5ex>[r]^{m}&\ar@<+0.5ex>[l]^{m^\dagger}H\ar@<-0.5ex>[r]_{m^{\perp\dagger}}&\ell^2 X^\perp\ar@<-0.5ex>[l]_{m^{\perp}}
}}\endxy 
\end{equation*} 
forms a biproduct with $1_H = mm^\dagger+m^\perp m^{\perp\dagger}$ and so $H\cong \ell^2 X\oplus \ell^2 X^\perp$. For the sake of contradiction assume $ \ker(m^\dagger) \neq 0$; then there exists a dagger monomorphism $f\colon K\to H$ with $m^\dagger f =0$ since $K$ is a dagger generator. Now for each non-zero $a\in X$, there exists an $x_a\colon  K\to \ell^2X $ for which $a=mx_a$.
\begin{equation*}
\xy*!C\xybox{\xymatrix @R+1pc @C+2pc{
\ell^2X \ar[r]^{m}&H&\ell^2X ^\perp\ar[l]_{\ker(m^\dagger)}\\
&K\ar@<-0.5ex>[u]_{f}\ar@<+0.5ex>[u]^{a}\ar[ul]^{x_a}&
}}\endxy 
\end{equation*}
It follows that $\langle f,a \rangle = a^\dagger f = (mx_a)^\dagger f = x_a^\dagger m^\dagger f = x^\dagger_a 0= 0$ and since $a$ is arbitrary $f$ is orthogonal to each $a\in X$. By taking the union $\{f\}\cup X$ we have an orthonormal set which strictly contains $X$, but this contradicts the maximality of $X$. Thus $\ker(m^\dagger) = 0$ and so $H\cong \ell^2X$ and hence $X$ is an orthonormal basis for $H$.
\end{proof}
Thus we recover the following well-known fact:
\begin{cor}\label{hilbbasis}
Any Hilbert space $\mathcal{H}$ has an orthonormal basis $X$. 
\end{cor}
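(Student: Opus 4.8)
The plan is to obtain this as an immediate specialisation of Proposition~\ref{orthonormalbase} to the category $\mathbf{Hilb}_\mathbb{K}$. First I would record that $\mathbf{Hilb}_\mathbb{K}$ is a Hilbert category, as verified in the Example following the definition of Hilbert category, and that it moreover satisfies the stronger hypothesis $(\mathrm{G}^\dagger)$: the one-dimensional space $K=\mathbb{K}$ is a simple generator, and the generating morphisms $x\colon K\to A$ supplied by the generator property may be chosen to be dagger monomorphisms, since any nonzero vector of a Hilbert space, suitably normalised, yields an isometric embedding $\mathbb{K}\hookrightarrow\mathcal{H}$. Hence $K$ is in fact a simple dagger generator, and Proposition~\ref{orthonormalbase} applies to deliver, for the object $\mathcal{H}$, an orthonormal family $(m_x)_{x\in X}$ of morphisms $K\to\mathcal{H}$ whose induced dagger monomorphism $m_X\colon\ell^2X\to\mathcal{H}$ is invertible.

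It then remains only to translate this categorical conclusion into the classical language. Under the standard bijection $\mathbf{Hilb}_\mathbb{K}(K,\mathcal{H})\cong\mathcal{H}$, $f\mapsto f(1)$, the sesquilinear form $\langle f,g\rangle=g^\dagger f$ of Proposition~\ref{sesquilinearform} is carried to the inner product of $\mathcal{H}$, so that an orthonormal family of morphisms $K\to\mathcal{H}$ corresponds precisely to an orthonormal set of vectors of $\mathcal{H}$. Moreover the object $\ell^2X$, built as the directed colimit of the finite biproducts $\bigoplus_{a\in A}K$, is in $\mathbf{Hilb}_\mathbb{K}$ exactly the usual Hilbert space $\ell^2(X)$; thus the invertibility of $m_X$ expresses that the closed span of $\{m_x(1)\}_{x\in X}$ is all of $\mathcal{H}$, which is the defining property of an orthonormal basis.

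I do not expect a genuine obstacle here, since the statement is essentially a sanity check recovering a familiar theorem from the general machinery. The only point requiring care is the verification that the categorical notion of orthonormal basis from Definition~\ref{deforthonormalbasis} coincides with the classical one; this is the content of the identification $\mathcal{H}\cong\ell^2X\cong\overbar{\mathrm{span}X}$ recorded in the remark preceding the corollary, and once that identification is granted the result is immediate.
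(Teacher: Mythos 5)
Your proposal is correct and matches the paper's own (implicit) argument: the corollary is stated as an immediate consequence of Proposition~\ref{orthonormalbase}, using the Example showing $\mathbf{Hilb}_\mathbb{K}$ is a Hilbert category, the remark that the one-dimensional space is a simple dagger generator (so $(\mathrm{G}^\dagger)$ holds), and the observation after Definition~\ref{deforthonormalbasis} that the categorical notion of orthonormal basis agrees with the classical one via $\mathcal{H}\cong\ell^2X\cong\overbar{\mathrm{span}X}$. Your extra details --- normalising a nonzero vector to witness the dagger generator property and identifying $\ell^2X$ with the classical $\ell^2(X)$ --- are exactly the verifications the paper leaves tacit.
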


\section{Equivalence}

\begin{prop}
Let $H$ be an object of a Hilbert category $\mathbf{C}$. The division
ring $\mathbf{C}(K,K)$ is isomorphic to $\mathbb{R}$, $\mathbb{C}$, or $\mathbb{H}$ and $\mathbf{C}(K,H)$ is a right $\mathbf{C}(K,K)$-Hilbert space.
\end{prop}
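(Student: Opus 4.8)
The plan is to reduce both assertions to Solèr's Theorem (Theorem~\ref{solerstheorem}), whose hypotheses are tailor-made for the structure accumulated on hom-sets $\mathbf{C}(K,-)$ in Sections~2--5. The one mismatch is that Solèr's Theorem demands an \emph{infinite-dimensional} orthomodular space carrying an orthonormal sequence, whereas the given $H$ may be finite-dimensional. I would bridge this by enlarging $H$: set $H' := H\oplus \ell^2\mathbb{N}$, which exists since $\mathbf{C}$ has binary biproducts (B) and the object $\ell^2\mathbb{N}$ is available from the directed-colimit construction of Section~5. This single object will yield both claims at once.

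First I would verify that $\mathbf{C}(K,H')$ meets every hypothesis of Solèr's Theorem. By Lemma~\ref{divisionring} the scalars $\mathbf{C}(K,K)$ form an involutive division ring; by Corollary~\ref{functorvect} together with the example following the definition of a Hermitian form, $\mathbf{C}(K,H')$ is a right $\mathbf{C}(K,K)$-vector space carrying the non-singular conjugate-symmetric form $\langle f,g\rangle = g^\dagger f$; and by Theorem~\ref{theoremorthomodularspace} this Hermitian space is orthomodular. Finally the inclusion $j\colon \ell^2\mathbb{N}\to H'$ is a dagger monomorphism, so applying Lemma~\ref{orthonormalsequence} to $\ell^2\mathbb{N}$ and composing with $j$ produces an orthonormal sequence $(jx_n\colon K\to H')_{n\in\mathbb{N}}$, using $(jx_m)^\dagger(jx_n)=x_m^\dagger j^\dagger j x_n=x_m^\dagger x_n=\delta_{mn}$; in particular $\mathbf{C}(K,H')$ is infinite-dimensional. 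Solèr's Theorem then yields simultaneously that $\mathbf{C}(K,K)$ is isomorphic, as an involutive division ring, to $\mathbb{R}$, $\mathbb{C}$ or $\mathbb{H}$, and that $\mathbf{C}(K,H')$ is a $\mathbf{C}(K,K)$-Hilbert space. This already settles the first assertion, which does not involve $H$.

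It remains to descend from $H'$ to $H$. The inclusion $i\colon H\to H'$ is a dagger monomorphism with $i^\dagger i=1_H$, so $\mathbf{C}(K,i)$ embeds $\mathbf{C}(K,H)$ into $\mathbf{C}(K,H')$ isometrically, since $\langle if,ig\rangle = g^\dagger i^\dagger i f = g^\dagger f=\langle f,g\rangle$. Regarding $i$ as a dagger subobject of $H'$, Theorem~\ref{subtoclosed} shows that its image $\varphi(i)\cong\mathbf{C}(K,H)$ is a \emph{closed} subspace of the Hilbert space $\mathbf{C}(K,H')$. A closed subspace of a Hilbert space over $\mathbb{R}$, $\mathbb{C}$ or $\mathbb{H}$ is again such a Hilbert space, the form restricting to a positive-definite one and completeness being inherited; this gives the second assertion.

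The step I expect to require the most care is this final descent. One must confirm that the purely algebraic notion of closedness, $\mathcal{F}^{\perp\perp}=\mathcal{F}$, that drives Sections~4--5 coincides, for the genuine Hilbert space $\mathbf{C}(K,H')$ delivered by Solèr's Theorem, with topological closedness, so that ``closed subspace'' really does produce a complete inner-product space. This is standard for the three division rings in question, but it is precisely the point at which the full strength of Solèr's conclusion (positive-definiteness and completeness) is invoked rather than merely the orthomodular axioms, and so it is where the argument must be made with explicit reference to the metric structure.
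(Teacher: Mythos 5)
Your proposal is correct and takes essentially the same route as the paper: the paper likewise applies Sol\`er's Theorem to the orthomodular space $\mathbf{C}(K,\ell^2\mathbb{N}\oplus H)$, using Lemma~\ref{orthonormalsequence} to supply the orthonormal sequence through the dagger monomorphism from $\ell^2\mathbb{N}$, and then recovers $\mathbf{C}(K,H)$ as a closed subspace of the resulting Hilbert space---there as the orthogonal complement of $\mathbf{C}(K,\ell^2\mathbb{N})$, in your version as the isometric image of $\mathbf{C}(K,i)$ via Theorem~\ref{subtoclosed}, an immaterial variant. Your single application of Sol\`er's Theorem (versus the paper's preliminary application to $\mathbf{C}(K,\ell^2\mathbb{N})$ alone) and your explicit remark that biorthogonal closedness coincides with topological closedness in a genuine Hilbert space are both fine; the latter is exactly the standard fact the paper's final step also relies on.
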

\begin{proof}
We know $\mathbf{C}(K,\ell^2\mathbb{N})$ to be an orthomodular space
by Theorem \ref{theoremorthomodularspace} and contain an infinite
orthonormal sequence $\{x_n\colon K\to\ell^2\mathbb{N\}}_{n\in
  \mathbb{N}}$ by Lemma \ref{orthonormalsequence}. Thus the
conditions for  Sol\'er's Theorem \ref{solerstheorem} hold and
so $\mathbf{C}(K,K)\in\{\mathbb{R,C,H}\}$ and
$\mathbf{C}(K,\ell^2\mathbb{N})$ is a Hilbert space.
Consider the orthomodular space $\mathbf{C}(K,\ell^2\mathbb{N}\oplus
H)$. The dagger monomorphism $\mathbf{C}(K,\ell^2\mathbb{N})\to
  \mathbf{C}(K,\ell^2\mathbb{N}\oplus H)$ determines an infinite orthonormal
  sequence by Lemma~\ref{orthonormalsequence}, and so
  $\mathbf{C}(K,\ell^2\mathbb{N}\oplus H)$ is also a Hilbert
  space.

Finally $\mathbf{C}(K,\ell^2\mathbb{N})$ is a closed subspace of the Hilbert space $\mathbf{C}(K,\ell^2\mathbb{N}\oplus H)$ and so its orthogonal complement $\mathbf{C}(K,H)$ is also a Hilbert space. 
\end{proof}

\begin{lemma}\label{functorhilb}
The functor $\mathbf{C}(K,-)\colon \mathbf{C}\to \mathbf{Vect}_{\mathbf{C}(K,K)}$ lifts to a dagger functor
\begin{align*}
\mathbf{C}(K,-)\colon \mathbf{C}\to \mathbf{Hilb}_{\mathbf{C}(K,K)}.
\end{align*}
\end{lemma}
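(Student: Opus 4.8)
The plan is to observe that the object-level lifting is already in hand: by the preceding proposition each $\mathbf{C}(K,H)$ is a genuine $\mathbf{C}(K,K)$-Hilbert space, with $\mathbf{C}(K,K)\in\{\mathbb{R},\mathbb{C},\mathbb{H}\}$, so the work is entirely at the level of morphisms. For $f\colon H\to H'$ in $\mathbf{C}$, Corollary~\ref{functorvect} already tells us that $\mathbf{C}(K,f)\colon g\mapsto fg$ is $\mathbf{C}(K,K)$-linear; what must be verified is (a) that this linear map is \emph{bounded}, so that it is a morphism of $\mathbf{Hilb}_{\mathbf{C}(K,K)}$, and (b) that the assignment respects the dagger, i.e.\ that the Hilbert-space adjoint of $\mathbf{C}(K,f)$ is $\mathbf{C}(K,f^\dagger)$.

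The key computation, which dispatches both points at once, is the formal-adjoint identity. Using the sesquilinear form $\langle f,g\rangle=g^\dagger f$ of Proposition~\ref{sesquilinearform}, for all $g\in\mathbf{C}(K,H)$ and $k\in\mathbf{C}(K,H')$ we have
\[
\langle fg,k\rangle = k^\dagger f g = (f^\dagger k)^\dagger g = \langle g, f^\dagger k\rangle .
\]
Thus $\mathbf{C}(K,f)$ and $\mathbf{C}(K,f^\dagger)$ are everywhere-defined mutual formal adjoints. I would then invoke the standard fact (Hellinger--Toeplitz, equivalently the closed graph theorem) that a linear operator between Hilbert spaces possessing an everywhere-defined formal adjoint is automatically bounded: if $g_n\to g$ and $fg_n\to h$, then for every $k$ we get $\langle h,k\rangle=\lim\langle fg_n,k\rangle=\lim\langle g_n,f^\dagger k\rangle=\langle g,f^\dagger k\rangle=\langle fg,k\rangle$ by continuity of the inner product, whence $h=fg$ by non-singularity of the form (the generator property of $K$); so the graph is closed and $\mathbf{C}(K,f)$ is bounded. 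This settles (a).

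For (b), the same identity shows that $\mathbf{C}(K,f^\dagger)$ is exactly the operator satisfying the defining relation $\langle \mathbf{C}(K,f)g,k\rangle=\langle g,\mathbf{C}(K,f^\dagger)k\rangle$ of the Hilbert-space adjoint, so $\mathbf{C}(K,f)^\dagger=\mathbf{C}(K,f^\dagger)$ and the lifted functor is a dagger functor. I expect the only real obstacle to be boundedness in (a); the elegant point is that one does not attempt to estimate $\|fg\|$ directly but instead exploits that the categorical dagger furnishes a globally defined adjoint, reducing boundedness to a classical automatic-continuity theorem. Care is needed only to confirm the argument is insensitive to the choice of $\mathbb{K}$, which holds since each $\mathbf{C}(K,H)$ is in particular a real Banach space and $\mathbf{C}(K,f)$ is $\mathbb{R}$-linear.
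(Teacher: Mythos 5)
Your proposal is correct and follows essentially the same route as the paper: the same formal-adjoint computation $\langle fg,k\rangle=\langle g,f^\dagger k\rangle$ establishing $\mathbf{C}(K,f)^\dagger=\mathbf{C}(K,f^\dagger)$, followed by automatic boundedness of everywhere-defined adjointable operators via the closed graph theorem, which is exactly the paper's Theorem~\ref{adjointtoboundedness}. Your closing remark that the argument reduces to the real case via $\mathbb{R}$-linearity is precisely how the paper handles the quaternionic case (Lemma~\ref{RCH_BDD}); the only cosmetic difference is that you inline the Hellinger--Toeplitz argument that the paper delegates to Appendix~A.
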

\begin{proof}
In Corollary \ref{functorvect} we saw that $\mathbf{C}(K,-)\colon \mathbf{C}\to \mathbf{Vect}_\mathbf{C}(K,K)$ is a functor. For each $H\in \mathbf{C}$, $\mathbf{C}(K,H)$ is a Hilbert space and so to lift the codomain of $\mathbf{C}(K,-)$ to $\mathbf{Hilb}$ we require $\mathbf{C}(K,f)$ to be bounded and $\mathbf{C}(K,f^\dagger)=\mathbf{C}(K,f)^\dagger$ for each $f\colon H_1\to H_2$ in $\mathbf{C}$. Let $f\colon H_1\to H_2$ be a morphism in $\mathbf{C}$ and then for each $h_1\in \mathbf{C}(K,H_1)$ and $h_2 \in \mathbf{C}(K,H_2)$,
\begin{align*}
\langle \mathbf{C}(K,f^\dagger)(h_2), h_1\rangle 
&=  \langle f^\dagger \circ h_2, h_1\rangle\\
&=  h_1^\dagger\circ f^\dagger\circ h_2\\
&= (f\circ h_1)^\dagger\circ h_2\\
&= \langle h_2,f\circ h_1 \rangle\\
&=  \langle h_2, \mathbf{C}(K,f)(h_1)\rangle 
\end{align*}
and so $\mathbf{C}(K,f^\dagger)$ is the adjoint to $\mathbf{C}(K,f)$; that is, $\mathbf{C}(K,f^\dagger) = \mathbf{C}(K,f)^\dagger$. It follows from Theorem \ref{adjointtoboundedness} that $\mathbf{C}(K,f)$ is bounded for each $f\in \mathbf{C}$.
\end{proof}

\begin{Definition}[Dagger Equivalence]
A \textit{dagger equivalence} between dagger categories $(\mathbf{C},\dagger)$ and $(\mathbf{D},\dagger)$ is a dagger functor $F\colon \mathbf{C}\to \mathbf{D}$ which is full, faithful,  and surjective on objects up to unitary isomorphism.
\end{Definition}

The rest of this section will be devoted to proving that $\mathbf{C}(K,-)\colon \mathbf{C}\to \mathbf{Hilb}_{\mathbf{C}(K,K)}$ is a dagger equivalence. We begin with faithfulness.

\begin{lemma}\label{faithful}
The dagger functor $\mathbf{C}(K,-)\colon \mathbf{C}\to \mathbf{Hilb}_{\mathbf{C}(K,K)}$ is faithful.
\end{lemma}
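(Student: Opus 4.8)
The plan is to show that the functor $\mathbf{C}(K,-)$ is injective on morphisms, i.e.\ that if $f,g\colon H_1\to H_2$ are morphisms of $\mathbf{C}$ with $\mathbf{C}(K,f)=\mathbf{C}(K,g)$, then $f=g$. Unwinding the definition of the hom-functor, the hypothesis $\mathbf{C}(K,f)=\mathbf{C}(K,g)$ says precisely that $f\circ x = g\circ x$ for every morphism $x\colon K\to H_1$. The conclusion $f=g$ is then exactly the statement that $K$ detects equality of the parallel pair $f,g$.

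\medskip

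\noindent First I would simply invoke the fact that $K$ is a generator, which is part of the standing hypothesis (G) (or the stronger ($\mathrm{G}^\dagger$)) in the definition of a Hilbert category. By the \emph{Definition of generator} given earlier in the excerpt, if $f\neq g$ then there exists some $x\colon K\to H_1$ with $f x\neq g x$; contrapositively, $f x = g x$ for all $x$ forces $f=g$. Since $\mathbf{C}(K,f)=\mathbf{C}(K,g)$ gives exactly $fx=gx$ for all $x\colon K\to H_1$, we conclude $f=g$, which is faithfulness.

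\medskip

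\noindent There is essentially no obstacle here: faithfulness of $\mathbf{C}(K,-)$ is just a restatement of the generator property, so the whole proof is a one-line observation. The only thing worth being careful about is matching the two formulations — that a generator in the sense of the definition (distinct parallel maps are separated by some $x\colon K\to A$) is equivalent to the representable functor $\mathbf{C}(K,-)$ being faithful (the induced map on homsets is injective) — but this equivalence is immediate from the definitions and requires no extra structure such as daggers, biproducts, or the Hilbert-space identification built up in the preceding sections.
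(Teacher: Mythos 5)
Your proposal is correct and takes exactly the same approach as the paper: unwinding $\mathbf{C}(K,f)=\mathbf{C}(K,g)$ to $fx=gx$ for all $x\colon K\to H_1$ and concluding $f=g$ from the generator property (G). The paper's proof is the same one-line argument, so there is nothing to add.
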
 
\begin{proof}
Let $f,g\colon H\to K$ be morphisms of $\mathbf{C}$ and suppose $\mathbf{C}(K,f)=\mathbf{C}(K,g)$. Then for each $h\in \mathbf{C}(K,H)$ we have $f\circ h= g\circ h$, so by the generator property of $K$ we have $f=g$.
\end{proof}

\begin{lemma}\label{essentiallysurjective}
The dagger functor $\mathbf{C}(K,-)\colon \mathbf{C}\to \mathbf{Hilb}_{\mathbf{C}(K,K)}$ is essentially surjective on objects.
\end{lemma}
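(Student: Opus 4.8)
The plan is to show that for every Hilbert space $\mathcal{H}$ over the division ring $\mathbb{K} := \mathbf{C}(K,K)$ there is an object of $\mathbf{C}$ whose image under $\mathbf{C}(K,-)$ is unitarily isomorphic to $\mathcal{H}$. By Corollary~\ref{hilbbasis}, $\mathcal{H}$ has an orthonormal basis indexed by some set $X$, so $\mathcal{H} \cong \ell^2(X)$ unitarily in $\mathbf{Hilb}_{\mathbb{K}}$. Hence it suffices to produce, for each set $X$, an object of $\mathbf{C}$ with $\mathbf{C}(K,-)$-image unitarily isomorphic to the standard Hilbert space $\ell^2(X)$; the natural candidate is the object $\ell^2 X \in \mathbf{C}$ constructed in Section~5. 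We already know from the start of this section that $\mathbf{C}(K,\ell^2 X)$ is a $\mathbb{K}$-Hilbert space, and by Lemma~\ref{orthonormalsequence} the family $(x_a \colon K \to \ell^2 X)_{a\in X}$ is orthonormal in it. It remains only to check that this family is total, i.e.\ that its closed linear span is all of $\mathbf{C}(K,\ell^2 X)$; a total orthonormal family is a Hilbert-space orthonormal basis, and the assignment sending the standard basis vector at $a$ to $x_a$ then extends to a unitary isomorphism $\ell^2(X) \cong \mathbf{C}(K,\ell^2 X)$.

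For totality I would argue through the ortholattice isomorphism $\varphi$ of Theorem~\ref{subtoclosed}. Let $\mathcal{F}$ denote the closed span of $\{x_a : a \in X\}$, a closed subspace, and write $\mathcal{F} = \varphi(n)$ for a dagger subobject $n \colon N \to \ell^2 X$. Each $x_a$ lies in $\varphi(n)$, so $x_a \leq n$, and likewise each finite inclusion $x_A \colon \bigoplus_{a\in A} K \to \ell^2 X$ satisfies $x_A \leq n$; the resulting factorisations assemble into a cocone over the defining diagram $D$ and thus induce a map $h$ with $n h = 1_{\ell^2 X}$. A dagger monomorphism that is split epic is unitary (from $n^\dagger n = 1$ and $nh=1$ one gets $h = n^\dagger$ and $nn^\dagger = 1$), so $n$ is invertible and $\mathcal{F} = \varphi(n) = \varphi(1) = \mathbf{C}(K,\ell^2 X)$. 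Equivalently, one can phrase this as: since $\varphi$ is an isomorphism of complete ortholattices it preserves directed joins, and because $\ell^2 X$ is the directed colimit of the $\bigoplus_{a\in A} K$ the join of the $x_A$ in $\mathrm{Sub}_\dagger(\ell^2 X)$ is $1_{\ell^2 X}$; transporting along $\varphi$ shows the closed span of the $x_a$ is the whole space.

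Combining these, $\mathbf{C}(K,\ell^2 X) \cong \ell^2(X) \cong \mathcal{H}$ unitarily, which establishes essential surjectivity. The main obstacle is the totality step: the categorical fact that $(x_a)$ is an orthonormal basis in the sense of Definition~\ref{deforthonormalbasis}, namely that the induced dagger monomorphism $\ell^2 X \to \ell^2 X$ is the identity, is immediate but does not by itself guarantee density in the Hilbert space $\mathbf{C}(K,\ell^2 X)$. The real content is the bridge between the colimit description of $\ell^2 X$ and topological density, which is exactly what the join-is-the-identity argument above supplies.
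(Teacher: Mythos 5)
Your proof is correct and follows essentially the same route as the paper: given $\mathcal{H}$ with an orthonormal basis indexed by $X$, form $\ell^2 X$ in $\mathbf{C}$ and match orthonormal bases of equal cardinality to obtain a unitary isomorphism $\mathcal{H}\cong\mathbf{C}(K,\ell^2 X)$. The one step the paper asserts without argument---that the components $(x_a)_{a\in X}$ form a \emph{total} orthonormal family in the Hilbert space $\mathbf{C}(K,\ell^2 X)$---is precisely what your join-is-the-identity argument via the ortholattice isomorphism $\varphi$ of Theorem~\ref{subtoclosed} supplies, so your write-up is, if anything, more complete than the paper's.
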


\begin{proof}
Let $\mathcal{H}\in \mathbf{Hilb}_{\mathbf{C}(K,K)}$ and let $X$ be an orthonormal basis for $\mathcal{H}$. We can construct $\ell^2_{\mathbf{C}}X$ in  $\mathbf{C}$ with orthonormal basis $X$. The canonical subset of components $\{x_a\colon K\to \ell^2_\mathbf{C} X\}_{a\in X}$ forms an orthonormal basis for the space $\mathbf{C}(K,\ell^2_\mathbf{C}X)$. Since both $\mathcal{H}$ and $\mathbf{C}(K,\ell^2_\mathbf{C}X)$ share bases of the same cardinality we have $\mathcal{H}\cong\mathbf{C}(K,\ell^2_\mathbf{C} X)$ in $\mathbf{Hilb}_{\mathbf{C}(K,K)}$.
\end{proof}
We now turn to fullness.
\begin{lemma}\label{finitedim}
Let $\mathbf{C}(K,H_1)$ be finite dimensional. Then for each bounded linear map $T\colon\mathbf{C}(K,H_1)\to \mathbf{C}(K,H_2)$, there exists a morphism $t\colon H_1\to H_2$ such that $T=\mathbf{C}(K,t)$.
\end{lemma}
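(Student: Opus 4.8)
The plan is to reduce the problem to the biproduct structure by choosing a finite orthonormal basis of $\mathbf{C}(K,H_1)$ and then defining $t$ through its action on basis vectors, exactly in the manner of a matrix representation.

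First I would fix a finite orthonormal basis. Since $\mathbf{C}(K,H_1)$ is a finite-dimensional Hilbert space over the division ring $\mathbf{C}(K,K)$, standard Hilbert space theory (Gram--Schmidt) supplies an orthonormal basis $(f_1,\dots,f_n)$ with each $f_i\colon K\to H_1$; orthonormality says precisely that $f_i^\dagger f_j=\delta_{ij}1_K$. The essential categorical input is a \emph{resolution of the identity}: for every $h\colon K\to H_1$ the basis expansion gives $h=\sum_i f_i\cdot(f_i^\dagger h)=\bigl(\sum_i f_i f_i^\dagger\bigr)h$, so that $\sum_i f_i f_i^\dagger$ and $1_{H_1}$ agree after composition with every map out of $K$; the generator property of $K$ then forces $\sum_{i=1}^n f_i f_i^\dagger=1_{H_1}$.

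Next I would write down the candidate morphism. Set $t:=\sum_{i=1}^n T(f_i)\circ f_i^\dagger\colon H_1\to H_2$, which is a legitimate morphism of $\mathbf{C}$ because the dagger biproduct enriches each homset in commutative monoids, so finite sums are available. It then remains to check $\mathbf{C}(K,t)=T$, i.e. $t\circ h=T(h)$ for all $h\colon K\to H_1$. Writing $\lambda_i:=f_i^\dagger h\in\mathbf{C}(K,K)$ and using the resolution of the identity together with the right-linearity of $T$, I would compute
\[
t\circ h=\sum_i T(f_i)\cdot\lambda_i=\sum_i T(f_i\cdot\lambda_i)=T\Bigl(\sum_i f_i\cdot\lambda_i\Bigr)=T\Bigl(\bigl(\sum_i f_i f_i^\dagger\bigr)h\Bigr)=T(h).
\]

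The calculation is short, so the point requiring care—rather than a genuine obstacle—is the possible non-commutativity of $\mathbf{C}(K,K)$ (which may be $\mathbb{H}$): the scalars $\lambda_i$ must be passed through $T$ strictly on the right, matching both the right action $f\cdot\lambda=f\circ\lambda$ and the way they appear as $T(f_i)\circ\lambda_i$. I would also note that boundedness of $T$ plays no role here, since every linear map out of a finite-dimensional Hilbert space is automatically bounded; only the additivity and right-homogeneity of $T$ are used. Finally, finiteness of the basis is exactly what lets us form the honest finite sum defining $t$ and sidesteps any convergence question, which is why the infinite-dimensional case will later need a separate argument.
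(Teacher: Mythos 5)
Your proof is correct and takes essentially the same approach as the paper: both fix a finite orthonormal basis of $\mathbf{C}(K,H_1)$, use it to identify $H_1$ with the biproduct $\bigoplus_i K$, define $t$ by its action on the basis elements, and conclude $T=\mathbf{C}(K,t)$ by linearity. Your explicit formula $t=\sum_i T(f_i)\circ f_i^\dagger$ together with the resolution of the identity $\sum_i f_i f_i^\dagger=1_{H_1}$ (via the generator property) merely unpacks the biproduct universal property that the paper invokes when it asserts that the basis elements exhibit $H_1$ as $\bigoplus_X K$ --- if anything, your version makes that implicit step explicit.
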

\begin{proof}
If $\{x_a\colon K\to H_1\}_{a\in X}$ is an orthonormal basis for $\mathbf{C}(K,H_1)$, then the $x_a$ exhibit $H_1$ as the biproduct $\oplus_X K$. 
It follows that the $T(x_a)\colon K\to H_2$ induce a unique $t\colon H_1\to H_2$ with $t x_a=T( x_a)$ for each $a$. 
\begin{equation*}
\xy*!C\xybox{\xymatrix @R+1pc @C+2pc{
&H_2\\
K\ar[r]_{x_a}\ar[ru]^{T(x_a)}&H_1\ar@{.>}[u]_{t}
}}\endxy 
\end{equation*} 
Since $T$ is (bounded and) linear, it is defined by how it acts on each basis element $x_a$ but $ T(x_a) = tx_a =\mathbf{C}(K,t)(x_a)$ for each $a\in X$ and so $T=\mathbf{C}(K,t)$.
\end{proof}

\begin{lemma}\label{full}
The dagger functor $\mathbf{C}(K,-)\colon \mathbf{C}\to \mathbf{Hilb}_{\mathbf{C}(K,K)}$ is full.
\end{lemma}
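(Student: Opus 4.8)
The plan is to reduce the statement to the finite-dimensional case already settled in Lemma~\ref{finitedim}, by presenting the domain as a directed colimit of finite biproducts, lifting $T$ on each finite piece, and then assembling the pieces into a single morphism. Let $T\colon\mathbf{C}(K,H_1)\to\mathbf{C}(K,H_2)$ be bounded and linear. By Proposition~\ref{orthonormalbase} the object $H_1$ has an orthonormal basis $X$, and by Lemma~\ref{orthonormalsequence} this exhibits $H_1$ as the directed colimit in $\mathbf{C}_{\mathrm{dm}}$ of the finite biproducts $\bigoplus_{a\in A}K$ over the finite subsets $A\subseteq X$, with dagger-monomorphism legs $x_A\colon\bigoplus_{a\in A}K\to H_1$. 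Since each $\mathbf{C}(K,\bigoplus_{a\in A}K)$ is finite dimensional, Lemma~\ref{finitedim} applies to the bounded linear map $T\circ\mathbf{C}(K,x_A)$, giving a unique $t_A\colon\bigoplus_{a\in A}K\to H_2$ with $\mathbf{C}(K,t_A)=T\circ\mathbf{C}(K,x_A)$.

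First I would check compatibility of the $t_A$. For $A\subseteq B$ the inclusion $\iota_{A,B}\colon\bigoplus_{a\in A}K\to\bigoplus_{b\in B}K$ satisfies $x_B\iota_{A,B}=x_A$, so $\mathbf{C}(K,t_B\iota_{A,B})=T\circ\mathbf{C}(K,x_A)=\mathbf{C}(K,t_A)$, and faithfulness (Lemma~\ref{faithful}) forces $t_B\iota_{A,B}=t_A$. Thus the $t_A$ form a cocone in $\mathbf{C}$ under the diagram whose colimit is $H_1$. The morphism $t\colon H_1\to H_2$ we seek should be the comparison map out of this cocone: once $t$ exists with $tx_A=t_A$, we get $\mathbf{C}(K,t)\circ\mathbf{C}(K,x_A)=T\circ\mathbf{C}(K,x_A)$ for all $A$, and since the ranges of the $\mathbf{C}(K,x_A)$ have dense union in $\mathbf{C}(K,H_1)$ while $\mathbf{C}(K,t)$ and $T$ are both bounded, this gives $\mathbf{C}(K,t)=T$.

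The hard part will be producing $t$ itself. The colimit defining $H_1$ lives in $\mathbf{C}_{\mathrm{dm}}$, whose universal property only factors cocones assembled from dagger monomorphisms, whereas the legs $t_A$ are arbitrary morphisms, so the factorisation is not formal. What must rescue the construction is that the cocone is uniformly controlled: because each $\mathbf{C}(K,x_A)$ is an isometry, the operators $\mathbf{C}(K,t_A)=T\circ\mathbf{C}(K,x_A)$ are uniformly bounded by $\|T\|$. This uniform bound is precisely the condition needed to push the cocone through the colimit — the abstract counterpart of the fact that in $\mathbf{Hilb}$ a cocone under the finite subspaces of $\ell^2X$ extends to $\ell^2X$ exactly when it is uniformly bounded. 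Reconciling this uniform bound with the $\mathbf{C}_{\mathrm{dm}}$-colimit (rather than a merely isometric colimit) is the delicate step, and is where axiom~(C) and the completeness of $\mathrm{Sub}_\dagger$ are forced to enter; I expect this to be the main obstacle.

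An alternative route isolates the same difficulty as a single clean statement. Since $T$ is bounded, its graph $\{(h,Th)\mid h\in\mathbf{C}(K,H_1)\}$ is a closed subspace of $\mathbf{C}(K,H_1\oplus H_2)$, so by Theorem~\ref{subtoclosed} it equals $\varphi(g)$ for some dagger subobject $g\colon G\to H_1\oplus H_2$; the first projection $a=\pi_1g\colon G\to H_1$ then has $\mathbf{C}(K,a)$ equal to the bijection $(h,Th)\mapsto h$, whose inverse $h\mapsto(h,Th)$ is bounded, so $\mathbf{C}(K,a)$ is a Hilbert-space isomorphism and $t:=\pi_2g\circ a^{-1}$ would satisfy $\mathbf{C}(K,t)=T$. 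Using the epi–(dagger mono) factorisation of Proposition~\ref{factorisation} together with the fact that $\varphi$ is an order isomorphism, one checks quickly that $a$ is both monic and epic; the entire remaining content is to upgrade this to invertibility of $a$ in $\mathbf{C}$, i.e.\ to show that $\mathbf{C}(K,-)$ reflects isomorphisms. This reflection is again where completeness must be used, confirming that the essential obstacle in either approach is the passage from the finite approximations to a genuine global morphism.
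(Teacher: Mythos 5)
The obstacle you flag is genuine, and your proposal does not close it: both of your routes stall exactly where you say they do, so as it stands this is a gap rather than a proof. In the colimit-assembly route, the uniform bound $\|T\circ\mathbf{C}(K,x_A)\|\leq\|T\|$ has no abstract counterpart in the axioms: condition (C) gives colimits only in $\mathbf{C}_{\mathrm{dm}}$, whose universal property factors cocones of \emph{dagger monomorphisms}, and your legs $t_A$ are arbitrary morphisms; moreover the norm on homsets exists only through the identification of $\mathbf{C}(K,H)$ with a Hilbert space, so there is no mechanism in $\mathbf{C}$ for ``extending a uniformly bounded cocone'' --- nothing in (C) or the completeness of $\mathrm{Sub}_\dagger$ supplies one. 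In the graph route, the remaining step is circular: to invert $a$ you would need a morphism of $\mathbf{C}$ inducing the bounded map $\mathbf{C}(K,a)^{-1}$, which is an instance of the fullness you are trying to prove, and monic-plus-epic does not suffice (in $\mathbf{Hilb}$ itself a bounded injection with dense image is a bimorphism but not invertible, so reflection of isomorphisms cannot be had on such general grounds).

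The paper closes the gap with an idea absent from your proposal: reduce to \emph{unitary} operators. After first reducing (via the dagger, and via a comparison of basis cardinalities using Lemma~\ref{orthonormalsequence} and Definition~\ref{deforthonormalbasis}) to the case of an endomorphism $T$ of $\mathbf{C}(K,\ell^2X)$, it invokes the operator-theoretic fact recorded in Appendix~\ref{appendixB} that every bounded operator on a real, complex or quaternionic Hilbert space is a finite linear combination of unitaries, $T=\alpha_1U_1+\dots+\alpha_NU_N$. For a unitary $U$ the family $(U(x_a))_{a\in X}$ is again orthonormal, so by Lemma~\ref{orthonormalsequence} it induces a dagger monomorphism $t\colon\ell^2X\to\ell^2X$ with $tx_a=U(x_a)$: the cocone legs are now dagger monomorphisms, which is precisely what makes the $\mathbf{C}_{\mathrm{dm}}$-colimit usable, and then $U=\mathbf{C}(K,t)$ by the density and boundedness argument you already have. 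The biproduct enrichment of $\mathbf{C}$ then assembles the general case from the unitary one. In short, the decomposition into unitaries is the device that converts your ``uniformly controlled but unstructured'' cocone into an orthonormal one, replacing the analytic extension principle that the axioms do not provide; without it (or the alternative of proving that $\mathbf{C}(K,-)$ reflects isomorphisms by some independent argument), neither of your reductions can be completed.
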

To prove that $\mathbf{C}(K,-)$ is full, we need to show that each
$T\colon\mathbf{C}(K,H_1)\to \mathbf{C}(K,H_2)$ in
$\mathbf{Hilb}_{\mathbf{C}(K,K)}$ has a corresponding $t\colon H_1\to
H_2$ in $\mathbf{C}$. The case where $\mathbf{C}(K,H_1)$ is
  finite dimensional was shown in Lemma~\ref{finitedim}. The approach
to proving the infinite-dimensional case is as follows:
\begin{itemize}
\item[(i)] Reduce to the case where $\dim\mathbf{C}(K,H_1)\leq \dim\mathbf{C}(K,H_2)$;
\item[(ii)] Reduce to the case where $H_1=H_2$;
\item[(iii)] Reduce to the case where $T\colon \mathbf{C}(K,H)\to \mathbf{C}(K,H)$ is unitary (a dagger isomorphism);
\item [(iv)] Prove that for each unitary $U\colon \mathbf{C}(K,H)\to \mathbf{C}(K,H)$ there exists a $t\colon H\to H$ such that $U=\mathbf{C}(K,t)$.
\end{itemize} 

\begin{proof}

\begin{itemize}

\item[(i)] If $\dim\mathbf{C}(K,H_1)> \dim\mathbf{C}(K,H_2) $ then work with $T^\dagger\colon \mathbf{C}(K,H_2)\to \mathbf{C}(K,H_1)$ instead.

\item[(ii)] Suppose for each bounded linear map $T\colon \mathbf{C}(K,H_2)\to \mathbf{C}(K,H_2)$ there exists a $t\colon H_2\to H_2$ such that $T= \mathbf{C}(K,t)$. Then if $\widetilde{T}\colon \mathbf{C}(K,H_1)\to \mathbf{C}(K,H_2)$ is a bounded linear map with $\dim \mathbf{C}(K,H_1)\leq \dim\mathbf{C}(K,H_2)$, there exist orthonormal bases $X_1$ and $X_2$ for $\mathbf{C}(K,H_1)$ and $\mathbf{C}(K,H_2)$ respectively. It follows that $|X_1|\leq |X_2|$ and so there exists a dagger monomorphism $\ell^2X_1\hookrightarrow \ell^2 X_2$, and by Definition \ref{deforthonormalbasis}, a dagger monomorphism $m\colon H_1\to H_2$. It follows that there exists a $\tilde{t}\colon H_2\to H_2$ such that $\widetilde{T}\circ \mathbf{C}(K,m^\dagger)= \mathbf{C}(K,\tilde{t})$ and so,
\begin{equation*}
\xy*!C\xybox{\xymatrix @R+2pc @C+2pc{
&\mathbf{C}(K,H_1)\ar[r]^{\widetilde{T}}&\mathbf{C}(K,H_2)\\
\mathbf{C}(K,H_1)\ar[r]_{\mathbf{C}(K,m)}\ar[ru]^{\mathbf{C}(K,1_{H_1})}&\mathbf{C}(K,H_2)\ar[ru]_{\mathbf{C}(K,\tilde{t})}\ar[u]_{\mathbf{C}(K,m^\dagger)}&
}}\endxy 
\end{equation*} 
commutes, and $\widetilde{T}= \mathbf{C}(K,\tilde{t}m)$.

\item[(iii):] Let $T\colon \mathbf{C}(K,H)\to \mathbf{C}(K,H)$ be a bounded linear map. Then by Appendix \ref{appendixB} there exists a family of unitary maps $\{ U_1,\dots, U_N \}$ and a family of $\mathbf{C}(K,K)$-coefficients $\{\alpha_1\dots\alpha_N\}$ such that $T=\alpha_1 U_1+\dots+\alpha_N U_N$. Let $U_i = \mathbf{C}(K,t_i)$ for some $t_i\colon H\to K$ for each $i\in \{1,\dots, N\}$. Then for $t =\alpha_1 t_1+\dots+ \alpha_N t_N$,
\begin{align*}
T &= \alpha_1 U_1+\dots+\alpha_N U_N= \alpha_1  \mathbf{C}(K,t_1)+\dots+\alpha_N  \mathbf{C}(K,t_N)=\mathbf{C}(K,t).
\end{align*}

\item[(iv)] We may take $H$ to be $\ell^2 X$ where $X$ is an orthonormal basis for $\mathbf{C}(K,H)$. Let $U\colon \mathbf{C}(K,\ell^2X)\to \mathbf{C}(K,\ell^2 X)$ be unitary.
  
Since $\{x_{a}\}_{a\in X}$ is an orthonormal basis for $\mathbf{C}(K,\ell^2 X)$ and since $U$ is unitary, the collection of $U(x_{a})$ for each $a\in X$ forms an orthonormal basis in $\mathbf{C}(K,\ell^2 X)$.
Thus $\{U(x_{a})\}_{a\in X}$ is a cocone in $\mathbf{C}_\mathrm{dm}$. The universal property of $\ell^2 X$ induces a unique dagger monomorphism $t\colon \ell^2 X\to \ell^2 X$ such that,
\begin{equation*}
\xy*!C\xybox{\xymatrix @R+0pc @C+0pc{
&\ell^2 X\\
K\ar[r]_{x_{a}}\ar[ru]^{U(x_{a})}&\ell^2 X\ar@{.>}[u]_{t}
}}\endxy 
\end{equation*} 
commutes for each $a\in X$, hence $U(x_{a}) = \mathbf{C}(K,t)(x_{a})$,
and therefore $U=\mathbf{C}(K,t)$. \qedhere
\end{itemize}
\end{proof}

\begin{theorem}\label{daggerequivalence}
The dagger functor $\mathbf{C}(K,-)\colon \mathbf{C}\to \mathbf{Hilb}_{\mathbf{C}(K,K)}$ is a dagger equivalence.
\end{theorem}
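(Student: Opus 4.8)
The plan is to read the theorem off directly from the definition of dagger equivalence, assembling the lemmas already proved in this section. By that definition I must verify four things: that $\mathbf{C}(K,-)$ is a dagger functor, that it is faithful, that it is full, and that it is surjective on objects up to unitary isomorphism.

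The first three are each handed to me by a previous result. Lemma~\ref{functorhilb} shows that $\mathbf{C}(K,-)$ corestricts to $\mathbf{Hilb}_{\mathbf{C}(K,K)}$ and preserves the dagger, since $\mathbf{C}(K,f^\dagger)$ is the adjoint of $\mathbf{C}(K,f)$. Faithfulness is Lemma~\ref{faithful}, an immediate consequence of $K$ being a generator. Fullness is Lemma~\ref{full}, whose infinite-dimensional case was the genuinely substantial step.

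The one remaining point is surjectivity on objects \emph{up to unitary isomorphism}. For this I would invoke Lemma~\ref{essentiallysurjective}, which for a given Hilbert space $\mathcal{H}$ produces an object $\ell^2_{\mathbf{C}}X$ of $\mathbf{C}$ together with an isomorphism $\mathcal{H}\cong\mathbf{C}(K,\ell^2_{\mathbf{C}}X)$ in $\mathbf{Hilb}_{\mathbf{C}(K,K)}$. I only need to observe that this isomorphism can be taken unitary rather than merely linear: it arises by matching an orthonormal basis $X$ of $\mathcal{H}$ with the canonical orthonormal basis $\{x_a\}_{a\in X}$ of $\mathbf{C}(K,\ell^2_{\mathbf{C}}X)$, and any bijection between orthonormal bases of Hilbert spaces extends to a unitary map. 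Hence $\mathbf{C}(K,-)$ is surjective on objects up to unitary isomorphism.

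With all four conditions verified, the theorem follows. The substance of the argument lies not here but in the supporting lemmas---in particular Lemma~\ref{full}, where a bounded operator had to be written as a linear combination of unitaries---so I expect the proof of this theorem itself to be a short matter of bookkeeping, the only subtlety being the refinement of ordinary essential surjectivity to the unitary version demanded by the definition.
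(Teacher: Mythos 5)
Your proposal is correct and matches the paper's proof, which is exactly this bookkeeping: the paper cites Lemmas~\ref{faithful}, \ref{essentiallysurjective} and \ref{full} (with the dagger-functor part already in Lemma~\ref{functorhilb}) and says nothing more. Your extra observation that the isomorphism $\mathcal{H}\cong\mathbf{C}(K,\ell^2_{\mathbf{C}}X)$ can be taken unitary, because it matches orthonormal bases, is a correct and welcome refinement of a point the paper leaves implicit.
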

\begin{proof}
Lemmas \ref{faithful},\ref{essentiallysurjective},\ref{full}.
\end{proof}

\section{Monoidal Structure}

\begin{Definition}[Dagger Monoidal Category]
A \textit{dagger monoidal category} is a dagger category
$(\mathbf{C},\dagger)$ with a monoidal structure
$(\otimes,I,\alpha,l,r)$ such that 
\begin{itemize}
\item the tensor product $\otimes\colon \mathbf{C}\times \mathbf{C}\to
  \mathbf{C}$ is a dagger functor, so that $(f\otimes g)^\dagger
    = f^\dagger\otimes g^\dagger$ for all $f,g$;
\item the associator $\alpha$, left unitor $l$ and right unitor $r$
  are unitary, which is to say that $\alpha_{A,B,C}$, $l_A$ and $r_A$ are unitary isomorphisms for each $A,B,C\in \mathrm{ob}\mathbf{C}$.
\end{itemize}
\end{Definition}

\begin{remark}
Equivalently, a category $\mathbf{C}$ is a dagger monoidal category if the contravariant functor $\dagger\colon \mathbf{C}^{\mathrm{op}}\to \mathbf{C}$ is a strict monoidal functor \cite[Definition 2.4]{Selinger2007}. In that case, the fact that the associator and unitors are unitary follows automatically. 
\end{remark}

If a Hilbert category $\mathbf{C}$ is equipped with monoidal structure as above then a scalar multiplication can be defined as: 
$
\bullet\colon  \mathbf{C}(A,B)\times\mathbf{C}(I,I)\to \mathbf{C}(I,A)\colon (f,\lambda)\mapsto f\bullet \lambda
$ 
where $f\bullet \lambda = r^{-1} (f\otimes \lambda) r$. Previously we defined a scalar multiplication 
$
\circ\colon \mathbf{C}(K,A)\times\mathbf{C}(K,K)\to \mathbf{C}(K,A)\colon 
(f,\lambda)\mapsto f\circ \lambda
$ 
where $\circ$ is morphism composition in $\mathbf{C}$. The following result relates the two.

\begin{prop}\label{uniquescalarmulti}
Let $\mathbf{C}$ be a Hilbert category equipped with a dagger monoidal structure with unit $K$. Then $\bullet=\circ$.
\end{prop}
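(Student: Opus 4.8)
The plan is to prove this by a direct calculation in the monoidal structure, using only bifunctoriality of $\otimes$, the naturality of the unitors, and the standard coherence identity $l_I = r_I$; nothing specific to Hilbert categories is needed beyond the identification $K = I$ of the simple generator with the monoidal unit. Writing $r_X\colon X\otimes I\to X$ for the right unitor and $l_X\colon I\otimes X\to X$ for the left unitor, I read the definition of $\bullet$ as $f\bullet\lambda = r_A\circ(f\otimes\lambda)\circ r_I^{-1}$ (the argument is insensitive to the precise unitor convention). Since both $\bullet$ and $\circ$ are operations $\mathbf{C}(K,A)\times\mathbf{C}(K,K)\to\mathbf{C}(K,A)$, it suffices to fix $f\colon K\to A$ and $\lambda\colon K\to K$ and show $r_A\circ(f\otimes\lambda)\circ r_I^{-1} = f\circ\lambda$.

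First I would split the tensor using bifunctoriality of $\otimes$, writing $f\otimes\lambda = (f\otimes 1_I)\circ(1_I\otimes\lambda)$, so that
\[
f\bullet\lambda = r_A\circ(f\otimes 1_I)\circ(1_I\otimes\lambda)\circ r_I^{-1}.
\]
Naturality of the right unitor applied to $f\colon I\to A$ gives $r_A\circ(f\otimes 1_I) = f\circ r_I$, which absorbs $f$ and leaves
\[
f\bullet\lambda = f\circ r_I\circ(1_I\otimes\lambda)\circ r_I^{-1}.
\]
It then remains to show that conjugating $1_I\otimes\lambda$ by $r_I$ recovers $\lambda$ itself.

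For this last identity I would invoke the coherence theorem for monoidal categories in the form $l_I = r_I\colon I\otimes I\to I$, together with naturality of the left unitor applied to $\lambda\colon I\to I$, namely $l_I\circ(1_I\otimes\lambda) = \lambda\circ l_I$. Combining these gives $r_I\circ(1_I\otimes\lambda) = l_I\circ(1_I\otimes\lambda) = \lambda\circ l_I = \lambda\circ r_I$, whence $r_I\circ(1_I\otimes\lambda)\circ r_I^{-1} = \lambda$ and therefore $f\bullet\lambda = f\circ\lambda$, as required.

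The computation is essentially routine once the conventions are pinned down, so the only real obstacle is bookkeeping: being careful about the direction of the unitors and about which tensor slot each naturality square applies to. The one genuinely nontrivial input is the coherence identity $l_I = r_I$, which is precisely what lets us pass from the right unitor built into the definition of $\bullet$ to the left unitor, whose naturality square places $\lambda$ in the tensor factor where naturality of $r$ is unavailable; I would cite the standard monoidal coherence result for this rather than reprove it.
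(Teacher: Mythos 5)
Your proof is correct and takes essentially the same route as the paper: the paper's proof is a single commuting diagram built from exactly your three ingredients --- the bifunctoriality split $f\otimes\lambda=(f\otimes\id)\circ(\id\otimes\lambda)$, naturality of the right unitor at $f$, and the square identifying the conjugate of $\id\otimes\lambda$ by the unitor with $\lambda$ itself. If anything, your write-up is slightly more careful, since you justify that last square explicitly via the coherence identity $l_I=r_I$ and naturality of $l$, a step the paper's diagram asserts without comment.
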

\begin{proof}
Let $f\in \mathbf{C}(K,A)$ and $\lambda\in \mathbf{C}(K,K)$. Then $f\circ \lambda = f\bullet \lambda$ since
\begin{equation*}
\xy*!C\xybox{\xymatrix @R+0pc @C+0pc{
&K\otimes\ar[rr]^{f\otimes \lambda} K\ar[rd]_{\id\otimes \lambda}&&A\otimes K\ar[rd]^{r^{-1}_K}&\\
K\ar[rd]_{\lambda}\ar[ru]^{r_K}&&K\otimes K\ar[ru]_{f\otimes \id}\ar[rd]_{r^{-1}_K}&&A\\
&K\ar[ru]_{r_K}\ar[rr]_{\id}&&K\ar[ru]_{f}&
}}\endxy 
\end{equation*}
commutes.
\end{proof}

In \cite{Heunen2022}, Heunen and Kornell consider a category satisfying
the axioms in Definition \ref{defhilbertcategory} with (G) replaced by:
\begin{itemize}
\item[(T)]  $\mathbf{C}$ is equipped with a dagger monoidal structure $\otimes$ whose unit $K$ is a simple monoidal generator\footnote{
A \textit{monoidal generator} in a monoidal category is an object $G$ with the property that when given a pair $f,g\colon A\otimes B\rightrightarrows C$, if $f\circ (h\otimes k) = g\circ (h\otimes k)$ for each $h\colon G\to A$ and each $k\colon G\to B$ then $f=g$.}.
\end{itemize}
They prove:
\begin{theorem}[Theorem 10, \cite{Heunen2022}]\label{hktheorem}
The dagger functor $\mathbf{C}(K,-)\colon \mathbf{C}\to \mathbf{Hilb}_{\mathbf{C}(K,K)}$ is a dagger monoidal equivalence with $\mathbf{C}(K,K)$ isomorphic to $\mathbb{R}$ or $\mathbb{C}$.
\end{theorem}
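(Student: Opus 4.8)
The plan is to bootstrap from the non-monoidal equivalence already established in Theorem~\ref{daggerequivalence}, so the first task is to check that the monoidal hypothesis (T) subsumes the generator hypothesis (G). Since the monoidal unit is the chosen object $K$, I would show that a simple monoidal generator is in particular a simple generator. Given parallel $f,g\colon A\to B$ with $fh=gh$ for every $h\colon K\to A$, I would consider $fr_A,gr_A\colon A\otimes K\to B$. Using the identity $\bullet=\circ$ from Proposition~\ref{uniquescalarmulti} one rewrites $r_A(h\otimes k)=(hk)\,r_K$ for $h\colon K\to A$ and $k\colon K\to K$, so that $fr_A(h\otimes k)=(fh)k\,r_K=(gh)k\,r_K=gr_A(h\otimes k)$. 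The monoidal generator property then forces $fr_A=gr_A$, hence $f=g$ since $r_A$ is invertible. Thus all the axioms of a Hilbert category hold and $\mathbf{C}(K,-)$ is already a dagger equivalence; what remains is to identify the division ring and to upgrade the equivalence to a monoidal one.

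For the scalars I would invoke the standard fact that the endomorphism monoid of a monoidal unit is commutative: by bifunctoriality of $\otimes$ and coherence of the unitors, any $\lambda,\mu\in\mathbf{C}(K,K)$ satisfy $\lambda\mu=\mu\lambda$. Combined with the Sol\`er-based proposition opening Section~6, which gives $\mathbf{C}(K,K)\in\{\mathbb{R},\mathbb{C},\mathbb{H}\}$, commutativity excludes the noncommutative $\mathbb{H}$ and leaves $\mathbf{C}(K,K)\cong\mathbb{R}$ or $\mathbb{C}$.

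The substantive work is upgrading the dagger equivalence to a dagger monoidal one. I would equip $\mathbf{C}(K,-)$ with comparison maps $\psi_{A,B}\colon \mathbf{C}(K,A)\otimes\mathbf{C}(K,B)\to\mathbf{C}(K,A\otimes B)$ given on elementary tensors by $a\otimes b\mapsto(a\otimes b)\circ\lambda$, where $\lambda\colon K\to K\otimes K$ is the inverse unitor, and take the unit comparison to be the identity on $\mathbf{C}(K,K)$ (as $K$ is the unit). The key point is that $\psi_{A,B}$ is a unitary isomorphism. It is an isometry on the algebraic tensor product: a direct computation gives $\langle(a\otimes b)\lambda,(a'\otimes b')\lambda\rangle=\lambda^\dagger(a'^\dagger a\otimes b'^\dagger b)\lambda$, and since $a'^\dagger a$ and $b'^\dagger b$ are scalars the identity $\bullet=\circ$ collapses this to $(a'^\dagger a)(b'^\dagger b)=\langle a,a'\rangle\langle b,b'\rangle$, which is exactly the inner product on the Hilbert tensor product. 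Hence $\psi_{A,B}$ extends to an isometry of the completions. For surjectivity I would use the monoidal generator property to show the image is dense: if $z\colon K\to A\otimes B$ is orthogonal to every $(a\otimes b)\lambda$, then cancelling the invertible $\lambda$ gives $z^\dagger(a\otimes b)=0$ for all $a,b$, whence $z^\dagger=0$ and $z=0$. An isometry with dense image is a unitary isomorphism. Naturality of $\psi$ and the associativity and unit coherence diagrams then reduce to the monoidal coherence already available in $\mathbf{C}$, making $\mathbf{C}(K,-)$ a unitary strong monoidal functor and hence a dagger monoidal equivalence.

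I expect the main obstacle to be the isomorphism claim for $\psi_{A,B}$ in the infinite-dimensional case, namely organising the isometry-plus-dense-image argument so that the monoidal generator property (rather than any finiteness or the commutativity of the scalars) supplies surjectivity, and checking that the comparison is compatible with the dagger and with the Hilbert-space completions involved.
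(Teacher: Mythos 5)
Your proposal is essentially correct, but it takes a genuinely different route from the paper, for the simple reason that the paper contains no proof of Theorem~\ref{hktheorem} at all: it is quoted from Heunen and Kornell \cite{Heunen2022}, whose original argument develops the entire theory with the monoidal axiom (T) in place from the outset (the tensor is used to define the scalar action, commutativity of $\mathbf{C}(K,K)$ rules out $\mathbb{H}$ before Sol\`er's theorem is applied, and the monoidal comparison is built into their equivalence from the start). You instead derive the theorem as a corollary of this paper's stronger non-monoidal result: you show (T) implies (G) --- your bridging identity $r_A(h\otimes k)=(hk)\,r_K$ is pure naturality-plus-coherence (using $l_K=r_K$ on the unit), so citing Proposition~\ref{uniquescalarmulti} for it is mildly circular in form but harmless in substance, since that proposition's proof uses only the monoidal structure and never (G) --- then invoke Theorem~\ref{daggerequivalence} to get the dagger equivalence, kill $\mathbb{H}$ by the Eckmann--Hilton commutativity of endomorphisms of the monoidal unit, and finally construct the unitary comparisons $\psi_{A,B}\colon \mathbf{C}(K,A)\otimes\mathbf{C}(K,B)\to\mathbf{C}(K,A\otimes B)$ by your isometry-plus-dense-image argument, where density comes precisely from the monoidal generator property applied to the pair $z^\dagger,0\colon A\otimes B\rightrightarrows K$. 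I checked the delicate points and they go through: $\psi_{A,B}$ is balanced over the (now commutative) scalars because $(\mu\otimes 1)\lambda=\lambda\mu=(1\otimes\mu)\lambda$; the inner-product collapse $\lambda^\dagger(\mu\otimes\nu)\lambda=\mu\nu$ is exactly Proposition~\ref{uniquescalarmulti}; and completeness of $\mathbf{C}(K,A\otimes B)$, needed for the dense-image step, is supplied by the first proposition of Section~6. What each approach buys: Heunen and Kornell's is self-contained and never needs the non-monoidal machinery, whereas yours exhibits their theorem as a genuine consequence of the more general characterisation, with commutativity of scalars as the only extra input --- and your comparison maps $\psi_{A,B}$ are exactly the maps $m_{A,B}$ that the paper's closing remarks (1)--(3) in Section~7 gesture at when asserting uniqueness of the dagger monoidal structure, so your argument in effect supplies the proof the paper says ``is not hard'' but omits. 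The remaining items you flag (naturality of $\psi$ and the associativity and unit coherence hexagons/triangles) are indeed routine diagram chases in $\mathbf{C}$ and pose no obstacle.
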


Although this is not explicitly observed in \cite{Heunen2022},
  an easy consequence of Theorem~\ref{hktheorem} is that if
  $\mathbb{K}$ is $\mathbb{R}$ or $\mathbb{C}$ then
  $\mathrm{Hilb}_{\mathbb{K}}$ has a unique dagger monoidal structure
  with $K$ as unit. 

In fact it’s not hard to prove the following directly:
\begin{itemize}
\item[(1)] Any monoidal structure on $\mathbf{Hilb}_{\mathbb{K}}$ must have a unit $I$ with dimension 0 or 1.
\item[(2)] If $I=K$, the tensor product $\boxtimes$ is a dagger functor, and $r\colon K\to K\boxtimes K$ is unitary, then there are dagger monomorphisms $m_{A,B}\colon A\otimes B \to A\boxtimes B$ (where $\otimes$ denotes the usual tensor).
\item[(3)] These maps $m_{A,B}$ are invertible if and only if $K$ is a monoidal separator with respect to $\otimes$. Thus the monoidal structure is uniquely determined (up to dagger isomorphism) by these requirements. 
\end{itemize}

\appendix
\section{Real, Complex and Quaternionic Hilbert Spaces}

We record here a way to regard complex and quaternionic Hilbert spaces
as real Hilbert spaces with extra structure. We make no particular
claims for originality for the material in this section.

To give a complex vector space is equivalent to giving a real vector
space $V$ equipped with a linear transformation $s\colon V\to V$
satisfying $s^2=-1$. To give an inner product $\langle ~,~\rangle_\mathbb{C}$ on the complex vector space is equivalent to giving an inner product $[~,~]$ on the real vector space for which $s^\dagger = -s$ where $s^\dagger$ is defined with respect to $[~,~]$. Then 
\begin{align*}
\langle u,v\rangle_\mathbb{C} := [u,v]-[su,v]i.
\end{align*}

Similarly to give a quaternionic vector space is equivalent to giving
a real vector space $V$ equipped with linear maps $s,t\colon V\to V$
with $s^2=t^2=(st^2)=-1$. To give a quaternionic inner product
$\langle~,~\rangle_{\mathbb{H}}$ is equivalent to giving a real inner
product $[~,~]$ with respect to which $s^\dagger = -s$ and $t^\dagger=
-t$; then 
  $(st)^{\dagger}=t^{\dagger}s^{\dagger}=(-t)(-s)=ts=-st$ and 
\begin{align*}
\langle u,v\rangle_\mathbb{H} := [u,v]-[su,v]i-[tu,v]j-[tsu,v]k.
\end{align*}

\begin{lemma}\label{innerproductlemma}
For a complex vector space $V$ we have $u\perp su$ for all $u\in V$. Likewise if $V$ is quaternionic then $u\perp su$, $u\perp tu$ and $u\perp (stu)$.
\end{lemma}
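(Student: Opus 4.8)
The plan is to isolate one elementary fact about skew-adjoint operators and then invoke it three times. The common feature of $s$, $t$ and $st$ is that each is skew-adjoint with respect to the real inner product $[~,~]$: we are given $s^\dagger=-s$ and $t^\dagger=-t$, and the computation $(st)^\dagger=t^\dagger s^\dagger=(-t)(-s)=ts=-st$ recorded just before the statement shows that $(st)^\dagger=-(st)$ as well. So the whole lemma reduces to a single claim about skew-adjoint maps.

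The key step is the claim that any linear map $r$ with $r^\dagger=-r$ on a real inner product space satisfies $[u,ru]=0$ for every $u$. This follows from a one-line calculation using the symmetry of the real form, the defining property of the adjoint, and finally the skew-adjointness of $r$:
\[
[u,ru]=[ru,u]=[u,r^{\dagger}u]=[u,-ru]=-[u,ru],
\]
whence $2[u,ru]=0$ and so $[u,ru]=0$, i.e.\ $u\perp ru$.

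It then remains only to apply this claim. Taking $r=s$ gives $u\perp su$ in both the complex and the quaternionic cases. In the quaternionic case, taking $r=t$ gives $u\perp tu$, and taking $r=st$ (skew-adjoint by the computation above) gives $u\perp stu$. Since $ts=-st$, orthogonality to $stu$ is equivalent to orthogonality to $tsu$, so the conclusion is unaffected by which order of composition one uses in the inner-product expansion.

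I do not expect any genuine obstacle here: the entire content is the skew-adjointness of the three operators together with the symmetry of $[~,~]$. The only point requiring a moment's care is the adjoint of the composite $st$, but that is precisely the anticommutation relation $ts=-st$ supplied by the quaternion identities $s^2=t^2=(st)^2=-1$, which is already noted in the discussion preceding the lemma.
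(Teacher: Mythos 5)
Your proof is correct and is essentially the paper's own argument: the paper performs the identical symmetry--adjoint--skew-adjointness computation for $s$ and then says ``a similar argument can be made for $t$ and $st$,'' which is exactly the single skew-adjoint claim you isolate and apply three times (the verification $(st)^\dagger=-st$ you use is likewise recorded in the paper just before the lemma). Your packaging as one reusable fact about skew-adjoint operators is a harmless tidying, not a different route.
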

\begin{proof}
It follows from the symmetry of the real inner product that, 
\begin{align*}
[su,u]&=[u,s^\dagger u]&&\text{(by adjoint)}\\
&=[u,-su] &&\text{(since $s^\dagger=-s$)}\\
&=-[u,su] &&\text{(by linearity)}\\
&=-[su,u] &&\text{(by symmetry)}
\end{align*}
and so $[su,u]=0$. A similar argument can be made for $t$ and $st$.
\end{proof}

\begin{lemma}\label{RCH_BDD}Let $\mathbb{K}=\mathbb{C}$ or $\mathbb{H}$, and $(V,[~,~])$ be a $\mathbb{K}$-inner product space. Then for each $x\in V$,
\begin{align*}
[x,x] = \langle x,x\rangle_\mathbb{K}.
\end{align*}
\end{lemma}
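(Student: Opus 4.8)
The plan is to evaluate the defining formula for $\langle\cdot,\cdot\rangle_{\mathbb{K}}$ on the diagonal, setting $v=x$, and then to observe that every term carrying an imaginary unit is annihilated by the orthogonality relations already established in Lemma~\ref{innerproductlemma}. No computation beyond those relations should be required; this is essentially a bookkeeping argument, splitting into the two cases $\mathbb{K}=\mathbb{C}$ and $\mathbb{K}=\mathbb{H}$.

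For the complex case I would substitute $v=x$ into the definition to get $\langle x,x\rangle_{\mathbb{C}} = [x,x]-[sx,x]i$. Lemma~\ref{innerproductlemma} asserts $x\perp sx$, which is exactly the statement $[sx,x]=0$, so the second term vanishes and $\langle x,x\rangle_{\mathbb{C}}=[x,x]$.

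For the quaternionic case the same substitution yields
\[
\langle x,x\rangle_{\mathbb{H}} = [x,x]-[sx,x]i-[tx,x]j-[tsx,x]k.
\]
Lemma~\ref{innerproductlemma} supplies $x\perp sx$, $x\perp tx$ and $x\perp(stx)$, so the coefficients of $i$ and $j$ vanish immediately. For the coefficient of $k$ I would use the anticommutation $ts=-st$ recorded just before Lemma~\ref{innerproductlemma}, so that $[tsx,x]=-[stx,x]=0$; alternatively one notes directly that $ts$ is skew-adjoint, since $(ts)^{\dagger}=s^{\dagger}t^{\dagger}=(-s)(-t)=st=-ts$, and then repeats verbatim the argument of Lemma~\ref{innerproductlemma} to conclude $[tsx,x]=0$. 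Either way all three imaginary coefficients are zero and $\langle x,x\rangle_{\mathbb{H}}=[x,x]$.

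The only point requiring any care is matching the $tsx$ appearing in the definition of the quaternionic form against the orthogonality relation $x\perp(stx)$ stated in Lemma~\ref{innerproductlemma}; since $ts$ and $st$ differ only by a sign, this mismatch is harmless. I do not expect a genuine obstacle here: the entire content of the lemma is already contained in Lemma~\ref{innerproductlemma}, and the result simply records that on the diagonal the real inner product coincides with the full $\mathbb{K}$-valued inner product.
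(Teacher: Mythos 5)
Your proposal is correct and follows essentially the same route as the paper: substitute $v=x$ into the defining formula and invoke Lemma~\ref{innerproductlemma} to kill the imaginary coefficients (the paper writes out only the quaternionic case, treating the complex case as analogous). Your explicit handling of the sign mismatch between the $tsx$ in the definition and the $x\perp(stx)$ of Lemma~\ref{innerproductlemma}, via $ts=-st$, is a small point the paper elides by asserting the vanishing ``directly,'' and is a welcome extra precision rather than a divergence.
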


\begin{proof}
We will only show the quaternionic case. It follows directly from Lemma \ref{innerproductlemma} that for each $x\in V$,
\begin{align*}
\langle x,x\rangle_\mathbb{H} 
&=  [x,x]-[sx,x]i-[tx,x]j-[tsx,x]k\\
&= [x,x] - 0i-0j-0k\\
&= [x,x]. \qedhere
\end{align*} 
\end{proof}

This is a very convenient result and means that any statements about boundedness that apply to real inner product spaces can also apply to complex or quaternionic inner product spaces. This includes statements about completeness and so applies to Hilbert spaces. In particular, this allows us to deduce the quaternionic case in the following theorem.

\begin{theorem}\label{adjointtoboundedness}
Let $K=\mathbb{R}$, $\mathbb{C}$, or $\mathbb{H}$, and $T\colon H\to H'$ be a linear map between $K$-Hilbert spaces. If $T$ has an adjoint $T^\dagger$ then $T$ is bounded.
\end{theorem}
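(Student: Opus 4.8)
The plan is to reduce the complex and quaternionic cases to the real case, and then to settle the real case by the closed graph theorem. The reduction is precisely what the appendix has been set up to supply. The reason for routing through the real case is the quaternionic one: the closed graph theorem is classically a statement about real or complex Banach spaces, and the machinery around Lemma~\ref{RCH_BDD} lets us avoid needing a quaternionic version. Concretely, by Lemma~\ref{RCH_BDD} the norm induced by the $\mathbb{K}$-inner product $\langle\cdot,\cdot\rangle_{\mathbb{K}}$ agrees with the norm induced by the underlying real inner product $[\cdot,\cdot]$, so a map is bounded as a $\mathbb{K}$-linear map exactly when it is bounded as a real-linear map, and $H$, $H'$ are complete as real Hilbert spaces under $[\cdot,\cdot]$.

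First I would record that $[u,v]=\mathrm{Re}\,\langle u,v\rangle_{\mathbb{K}}$, which is immediate from the defining formulas for $\langle\cdot,\cdot\rangle_{\mathbb{C}}$ and $\langle\cdot,\cdot\rangle_{\mathbb{H}}$ in the appendix. Taking real parts of the adjoint identity $\langle Tx,y\rangle_{\mathbb{K}}=\langle x,T^{\dagger}y\rangle_{\mathbb{K}}$ then yields $[Tx,y]=[x,T^{\dagger}y]$ for all $x\in H$ and $y\in H'$. Since a $\mathbb{K}$-linear map is in particular real-linear, this shows that $T$, viewed as a real-linear map of real Hilbert spaces, still admits $T^{\dagger}$ as an adjoint. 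It therefore suffices to treat the case $\mathbb{K}=\mathbb{R}$.

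For the real case I would invoke the closed graph theorem, whose hypotheses hold because $H$ and $H'$ are real Banach spaces. It remains only to check that the graph of $T$ is closed. Suppose $x_n\to x$ in $H$ and $Tx_n\to y$ in $H'$. Then for every $z\in H'$, continuity of the inner product in each variable gives
\[
[y,z]=\lim_n [Tx_n,z]=\lim_n [x_n,T^{\dagger}z]=[x,T^{\dagger}z]=[Tx,z],
\]
so that $[y-Tx,z]=0$ for all $z$, and hence $y=Tx$ by definiteness of the real inner product. Thus the graph of $T$ is closed and $T$ is bounded.

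The only substantive step is the reduction: once boundedness is recognised as a statement about the underlying real normed structure, the remaining argument is the standard closed-graph proof. The points to be careful about are that the closed graph theorem genuinely requires completeness of both spaces, which is guaranteed since $H$ and $H'$ are Hilbert spaces, and that passing to real parts really does preserve the adjoint relation, which holds because $[\cdot,\cdot]=\mathrm{Re}\,\langle\cdot,\cdot\rangle_{\mathbb{K}}$.
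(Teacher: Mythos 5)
Your proposal is correct and takes essentially the same route as the paper: both check that the graph of $T$ is closed via the adjoint identity and continuity of the inner product, then apply the closed graph theorem, with Lemma~\ref{RCH_BDD} reducing the quaternionic (and complex) case to the underlying real Hilbert space where that theorem applies. Your explicit step that $[u,v]=\mathrm{Re}\,\langle u,v\rangle_{\mathbb{K}}$, so that $T^{\dagger}$ remains an adjoint for the real structure, merely spells out the reduction the paper handles in the remark preceding the theorem.
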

\begin{proof}
Let $(x_n)$ be a sequence in $H$ such that $x_n\to x$ and $Tx_n\to z$ when $n\to \infty$. Then for each $y\in K$, $\langle Tx_n, y \rangle \to \langle z,y \rangle$ but also $\langle Tx_n, y \rangle =\langle x_n, T^\dagger y \rangle \to \langle x,T^\dagger y \rangle =  \langle Tx, y \rangle$. It follows from the uniqueness of limits that $Tx=z$. By Theorem 4.13-3 in \cite{Kreyszig_1978}, $T$ is a closed linear operator and then by the closed graph Theorem 4.13-2 in \cite{Kreyszig_1978} $T$ is bounded.
\end{proof}

\section{Decomposition of bounded linear operators}\label{appendixB}

This section is based on the arguments of \cite{Bottcher:2012aa}. The
real and complex cases can be found there; the quaternionic case is a
straightforward extension.

\begin{lemma}
Let $T\colon H\to H$ be a bounded linear map on a complex Hilbert space $H$. Then $T$ is a linear combination of (four) unitary linear maps. 
\end{lemma}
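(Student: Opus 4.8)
The plan is to prove that any bounded linear map $T\colon H\to H$ on a complex Hilbert space decomposes as a linear combination of four unitary maps, using the standard trick of first reducing to self-adjoint operators and then expressing a self-adjoint contraction as an average of two unitaries. First I would write $T = A + iB$ where $A = \tfrac{1}{2}(T + T^\dagger)$ and $B = \tfrac{1}{2i}(T - T^\dagger)$ are both self-adjoint; this is the Cartesian decomposition and it reduces the problem to handling a single self-adjoint operator, since a linear combination of linear combinations of unitaries is again such a combination.

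Next I would handle a bounded self-adjoint operator $S$. After rescaling by a positive real so that $\|S\| \le 1$, the operator $1 - S^2$ is positive self-adjoint with spectrum in $[0,1]$, so it has a self-adjoint square root $R = \sqrt{1 - S^2}$ (obtained via the continuous functional calculus, or equivalently by the spectral theorem for bounded self-adjoint operators). The key identity is then that
\begin{align*}
U_\pm := S \pm iR
\end{align*}
are unitary: since $S$ and $R$ commute and are self-adjoint, one computes $U_\pm^\dagger U_\pm = (S \mp iR)(S \pm iR) = S^2 + R^2 = 1$ and likewise $U_\pm U_\pm^\dagger = 1$. Hence $S = \tfrac{1}{2}(U_+ + U_-)$ exhibits $S$ as a linear combination of two unitaries. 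Applying this to both $A$ and $B$ and recombining yields $T$ as a linear combination of (at most) four unitaries, with scalar coefficients drawn from $\mathbb{C}$.

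I expect the main obstacle to be the construction of the square root $R = \sqrt{1 - S^2}$ together with the verification that it commutes with $S$ and is itself self-adjoint, since this is precisely where the spectral theory (functional calculus) for bounded self-adjoint operators is invoked rather than any elementary algebra. Everything else is routine algebraic manipulation: the Cartesian decomposition and the unitarity check for $U_\pm$ are direct calculations once the functional calculus is available. I would therefore cite the continuous functional calculus (or the square-root lemma for positive operators) as the essential analytic input, and organise the write-up so that the reduction to self-adjoint operators and the two-unitary decomposition of a self-adjoint contraction are the two visible steps, with the square-root existence quoted as a standard fact.
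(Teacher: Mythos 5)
Your proposal is correct and follows essentially the same route as the paper: the Cartesian decomposition $T = A + iB$ into self-adjoint parts, rescaling a self-adjoint operator to a contraction, and writing it as $\tfrac{1}{2}(S+iR)+\tfrac{1}{2}(S-iR)$ with $R=\sqrt{1-S^2}$ supplied by the square-root lemma (the paper cites Reed--Simon for exactly this, including the commutation $SR=RS$ that your unitarity check needs). The only cosmetic difference is that the paper rescales by $2\|S'\|$ to get $\|S\|<1$ rather than $\|S\|\le 1$, which is immaterial here.
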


\begin{proof}
Let $T\colon H\to H$ be a bounded linear map. Observe that $T$ is a linear combination of self adjoint linear maps,
\begin{align*}
T= \frac{1}{2}(T+T^\dagger)+\frac{1}{2i}(iT-iT^\dagger).
\end{align*}
Now that we have established $T$ as a linear combination of self-adjoint maps, we show that a general self adjoint map is a linear combination of unitaries. Suppose $S'\colon H\to H$ is a self adjoint linear map. We can rescale $S'$ to
\begin{align*}
S:= \frac{S'}{2\|S'\|}
\end{align*}
so that $\|S\|<1$. Consider $1-S^2$; this is clearly positive and
self-adjoint.
The square root lemma (p196 Theorem VI.9 \cite{Reed2012}) tells us
that since $1-S^2$ is a positive self-adjoint linear map, there exists
a unique positive self-adjoint linear map $R$ such that $R^2 = 1-S^2$
and moreover, $R$ commutes with each $S'$ which commutes with
$1-S^2$; in particular $SR=RS$.
We can then write
\begin{align*}
S= \frac{1}{2}(S+iR)+\frac{1}{2}(S-iR).
\end{align*}
We claim that $S+iR$ and $S-iR$ are unitary linear maps. First observe that
\begin{align*}
(S+iR)(S+iR)^\dagger &= (S+iR)(S^\dagger-iR^\dagger)\\
&= (S+iR)(S-iR)\\
&= S^2+R^2+i(RS-SR)\\ 
&= 1
\end{align*} 
and similarly $(S+iR)^\dagger(S+iR) = 1 $ making $S+iR$ unitary. The same argument holds for $S-iR$. Therefore $T$ is a linear combination of unitary linear maps. 
\end{proof}

\begin{lemma}\label{complexcase}
Let $T\colon H\to H$ be a bounded linear map on an infinite dimensional real Hilbert space $H$. Then $T$ is a linear combination of orthogonal (unitary) isomorphisms. 
\end{lemma}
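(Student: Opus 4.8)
The plan is to reduce to the complex case established in the preceding lemma by equipping $H$ with a complex structure, a device available precisely because $H$ is infinite dimensional. The obstruction to copying the complex argument directly is the self-adjoint part: there the complex proof writes a self-adjoint contraction $S$ as $\tfrac12((S+iR)+(S-iR))$ with $R=\sqrt{1-S^2}$, and over $\mathbb{R}$ the factor $i$ is unavailable. (By contrast the skew-adjoint part is easy over $\mathbb{R}$: if $A^\dagger=-A$ and $\|A\|<1$ then $1+A^2>0$, and setting $R=\sqrt{1+A^2}$ one checks directly that $A\pm R$ are orthogonal with $A=\tfrac12((A+R)+(A-R))$.) The role of $i$ will be played by a fixed orthogonal operator $J$.

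First I would choose an orthogonal $J\colon H\to H$ with $J^2=-1$ and $J^\dagger=-J$; such a $J$ exists because an infinite orthonormal basis can be partitioned into pairs, on each of which $J$ acts by $e\mapsto e'$, $e'\mapsto -e$. As recorded at the start of Appendix A, $J$ makes $H$ into a complex Hilbert space $H_J$, with $i$ acting as $J$ and complex inner product $\langle x,y\rangle - \langle Jx,y\rangle i$; moreover any complex-unitary operator on $H_J$ preserves the real part of this form and hence is real-orthogonal. The key sub-step is then that every bounded complex-linear operator $L$ on $H_J$ is a \emph{real} linear combination of orthogonal operators. Indeed, the preceding lemma gives $L=\sum_k \alpha_k U_k$ with $U_k$ unitary on $H_J$ (hence orthogonal on $H$, and commuting with $J$) and $\alpha_k\in\mathbb{C}$; writing $\alpha_k=a_k+b_k i$ and remembering that $i$ acts as $J$, we obtain $\alpha_k U_k = a_k U_k + b_k\,J U_k$, a real combination of the orthogonal operators $U_k$ and $JU_k$.

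It remains to reduce an arbitrary bounded real-linear $T$ to complex-linear pieces. I would split $T = A + B$ with $A=\tfrac12(T-JTJ)$ and $B=\tfrac12(T+JTJ)$, so that $A$ commutes with $J$ (is complex-linear) while $B$ anticommutes with $J$. The sub-step applies to $A$ directly. For $B$, fix an orthogonal antiunitary $C$ with $C^2=1$ and $CJ=-JC$ (conjugation with respect to a complex orthonormal basis of $H_J$); then $CB$ commutes with $J$, so the sub-step gives $CB=\sum_j r_j W_j$ with $r_j$ real and $W_j$ orthogonal, whence $B = C(CB)=\sum_j r_j\,(C W_j)$ with each $CW_j$ orthogonal. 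Adding the two expressions exhibits $T=A+B$ as a real linear combination of orthogonal isomorphisms. The main obstacle is the one already flagged—the failure of the complex self-adjoint trick over $\mathbb{R}$—and the whole point of the argument is that infinite-dimensionality supplies the orthogonal complex structure $J$ (and conjugation $C$) needed to reinstate it; the remaining work is the routine verification that complex-unitaries and antiunitaries are real-orthogonal and that $J$ absorbs the imaginary parts of the coefficients.
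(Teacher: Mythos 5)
Your proof is correct, but it takes a genuinely different route from the paper's. The paper follows B\"ottcher--Pietsch directly over $\mathbb{R}$: after rescaling so that $\|T\|\leq 1/2$, the operator $I-T$ is invertible, and its polar decomposition yields $T=I-US$ with $U$ orthogonal and $S$ symmetric; infinite-dimensionality is then spent on writing $H$ as an $S$-invariant orthogonal sum $H_1\oplus H_1$, so that $S=S_1\oplus S_2$, after which explicit rotation-type block matrices $\bigl(\begin{smallmatrix}A&\pm\sqrt{I-A^2}\\ \mp\sqrt{I-A^2}&A\end{smallmatrix}\bigr)$ (and their analogues for $\frac{1}{2}(S_1-S_2)$) exhibit $S$, hence $T$, as a combination of orthogonals. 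You instead spend the infinite-dimensionality once, up front, to construct the orthogonal complex structure $J$, and then recycle the complex lemma: your splitting $T=\frac{1}{2}(T-JTJ)+\frac{1}{2}(T+JTJ)$ into $J$-commuting and $J$-anticommuting parts, the absorption of imaginary coefficients via $\alpha U=aU+bJU$, and the conjugation trick $B=C(CB)$ with $C^2=1$, $CJ=-JC$ are all sound --- in particular $CB$ does commute with $J$, and complex unitaries and conjugations preserve $[x,y]=\mathrm{Re}\,\langle x,y\rangle_{\mathbb{C}}$ and so are real-orthogonal, as you claim. What each approach buys: yours is a cleaner reduction that needs no real structure theory beyond the existence of $J$ and $C$, whereas the paper's argument, though heavier, is built from functional-calculus operations on $S$ and an invariant splitting that commute with any operator commuting with $S$; this equivariance is precisely what the paper's subsequent quaternionic lemma exploits (everything must commute with $R_i$ and $R_j$). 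Your $J$ is chosen with no regard to additional structure, so adapting your reduction to the quaternionic case would require extra care (e.g.\ taking $J=R_i$ and then handling the parts anticommuting with $R_j$), a bookkeeping problem the paper's route avoids.
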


\begin{proof}
Let $T'\colon H\to H$ be a bounded linear map on a separable Hilbert space $H$. Rescale $T'$ so that
\begin{align*}
T=\frac{T'}{2\|T'\|}
\end{align*}
By Lemma 3.4 \cite{Bottcher:2012aa} $T$ can be written in the form 
\begin{align*}
T= I-US
\end{align*}
where $U$ is orthogonal (unitary) and $S$ is symmetric (Hermitian). Using Lemma 3.1 \cite{Bottcher:2012aa} we can write $H$ as the orthogonal sum $H_1\oplus H_1$ of two copies of an infinite dimensional $S$-invariant closed linear subspace $H_1$ of $H$ and so $S$ can be written as the direct sum\footnote{As pointed out by Halmos in the proof of Lemma 3 \cite{Halmos1952} \textit{``The underlying Hilbert space, if it is not already separable, can be expressed as a direct sum of separable, infinite dimensional subspaces invariant under the given operator. There is, therefore, no loss of generality in restricting attention to separable Hilbert spaces."}},
\begin{align*}
S=S_1\oplus S_2\colon H_1\oplus H_1\to H_1\oplus H_1.
\end{align*}
As in Lemma 4.2 \cite{Bottcher:2012aa} we can write $S_1\oplus S_2$ as
\begin{align*}
\left( \begin{matrix}
S_1&0\\
0&S_2\\ 
\end{matrix}\right)
= 
\left( \begin{matrix}
\frac{S_1+S_2}{2}&0\\
0&\frac{S_1+S_2}{2}\\ 
\end{matrix}\right)
+
\left( \begin{matrix}
\frac{S_1-S_2}{2}&0\\
0&-\frac{S_1-S_2}{2}\\ 
\end{matrix}\right).\tag{3}\label{s}
\end{align*}
Setting
\begin{align*}
A:=\frac{S_1+S_2}{2},\quad\quad B:= \frac{S_1-S_2}{2}
\end{align*}
it follows as in Lemma 4.1 \cite{Bottcher:2012aa} that the first term in \eqref{s} can be written as
\begin{align*}
\left( \begin{matrix}
A&0\\
0&A
\end{matrix}\right)
=
\frac{1}{2}\left( \begin{matrix}
A&\sqrt{I-A^2}\\
-\sqrt{I-A^2}&A
\end{matrix}\right)
+
\frac{1}{2}\left( \begin{matrix}
A&-\sqrt{I-A^2}\\
\sqrt{I-A^2}&A
\end{matrix}\right)
\end{align*}
and the second term in \eqref{s} as
\begin{align*}
\left( \begin{matrix}
B&0\\
0&-B
\end{matrix}\right)
=
\frac{1}{2}\left( \begin{matrix}
B&\sqrt{I-B^2}\\
\sqrt{I-B^2}&-B
\end{matrix}\right)
+
\frac{1}{2}\left( \begin{matrix}
B&-\sqrt{I-B^2}\\
-\sqrt{I-B^2}&-B
\end{matrix}\right)
\end{align*}
which are linear combinations of orthogonal operators and thus so is $S$ and hence so is $T = I-US$. Therefore $T'$ is a linear combinations of orthogonal operators.
\end{proof}

\begin{lemma}
Let $T\colon H\to H$ be a bounded linear map on an infinite-dimensional quaternionic Hilbert space $H$. Then $T$ is a linear combination of orthogonal (unitary) maps. 
\end{lemma}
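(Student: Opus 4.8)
The plan is to follow the real case of Lemma~\ref{complexcase} essentially verbatim, checking at each step that the construction stays within the class of quaternionic-linear operators and that the square roots it requires actually exist over $\mathbb{H}$. First I would rescale, replacing $T'$ by $T = T'/(2\|T'\|)$ so that $\|T\|$ is small enough for the later estimates, and then invoke the quaternionic analogues of Lemmas 3.4 and 3.1 of \cite{Bottcher:2012aa} to write $T = I - US$ with $U$ unitary and $S$ self-adjoint, and to split $H \cong H_1 \oplus H_1$ as two copies of an infinite-dimensional $S$-invariant closed subspace, so that $S = S_1 \oplus S_2$. These are polar-decomposition and invariant-subspace arguments that are insensitive to the ground division ring and carry over unchanged, with $U$ and $S$ again quaternionic-linear.

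The heart of the matter is the block decomposition. Setting $A = (S_1+S_2)/2$ and $B = (S_1-S_2)/2$, the same two matrix identities as in the real case express $\mathrm{diag}(A,A)$ and $\mathrm{diag}(B,-B)$ as sums of operators of the shape $\frac{1}{2}\begin{pmatrix} A & \pm\sqrt{I-A^2} \\ \mp\sqrt{I-A^2} & A\end{pmatrix}$, and similarly for $B$. The verification that each such block is unitary is purely algebraic: it uses only that $A$ is self-adjoint, that $\sqrt{I-A^2}$ is self-adjoint, and that $A$ commutes with $\sqrt{I-A^2}$, the computation $MM^\dagger = A^2 + (I - A^2) = I$ going through over $\mathbb{H}$ because $(PQ)^\dagger = Q^\dagger P^\dagger$ holds for the quaternionic adjoint.

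Thus the only genuinely new input is the existence of $\sqrt{I-A^2}$ and $\sqrt{I-B^2}$ as positive self-adjoint quaternionic-linear operators commuting with $A$ and $B$. Here I would reduce to the real case via Lemma~\ref{RCH_BDD}: viewing $H$ as a real Hilbert space with the same norm, any quaternionic-linear self-adjoint operator is also real self-adjoint, and quaternionic positivity coincides with real positivity. The real square root lemma then furnishes a unique positive real self-adjoint square root $R$; since $R$ is a norm-limit of polynomials in the operator, and that operator, being quaternionic-linear, commutes with the structure maps $s$ and $t$, so does $R$, whence $R$ is again quaternionic-linear. Applying this to the positive self-adjoint operators $I - A^2$ and $I - B^2$ gives the square roots, which commute with $A$ and $B$ respectively.

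With every ingredient in place, each block operator is a quaternionic unitary, so $S$ is a finite linear combination of unitaries, and therefore so is $T = I - US$ and hence $T'$. I expect the main obstacle to be precisely this square-root step: one must guarantee that the functional calculus (or the real-reduction argument above) produces a quaternionic-\emph{linear} positive square root \emph{commuting} with the given operator, since without that commutativity the unitarity computation $MM^\dagger = I$ breaks down and the whole decomposition collapses.
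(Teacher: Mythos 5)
Your proposal is correct and takes essentially the same route as the paper: both run the B\"ottcher--Pietsch real-case construction (rescale, $T=I-US$, the splitting $H\cong H_1\oplus H_1$, the block-matrix unitaries built from $A$, $B$ and $\sqrt{I-A^2}$, $\sqrt{I-B^2}$) while ensuring every constructed operator commutes with the quaternionic structure maps, and your polynomial-approximation argument for the $\mathbb{H}$-linearity of the square roots is exactly the commutation clause of the square root lemma that the paper invokes. The only cosmetic difference is that the paper verifies $\mathbb{H}$-linearity of the polar parts $U$ and $S=|Q|$ by explicit computation with an operator $R\in\{R_i,R_j\}$, whereas you assert the quaternionic analogues of Lemmas 3.4 and 3.1 of \cite{Bottcher:2012aa} directly --- a claim the paper itself makes at the same level of detail in its closing remark about the Lemma 3.1 construction.
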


Consider a quaternionic Hilbert space $H$ as a real Hilbert space equipped with operators $R_i, R_j\colon H\to H$ which satisfy $R_i^2 =R_j^2 = -\id_H$, $R_iR_j = -R_jR_i$ and $R_i^\dagger = -R_i$ and $R_j^\dagger = -R_j$. The $\mathbb{H}$-linear operators on $H$ are $\mathbb{R}$-linear operators which commute with $R_i$ and $R_j$. The proof for the quaternionic case is then analogous to the real case but requires each operator to commute with $R_i$ and $R_j$. 

\begin{proof}
Let $T'$ be a bounded $\mathbb{R}$-linear map. Rescale $T'$ so that
\begin{align*}
T=\frac{T'}{2\|T'\|}.
\end{align*}
Let $R$ commute with $T'$. Then 
\begin{align*}
RT=\frac{RT'}{2\|T'\|} =\frac{T'R}{2\|T'\|} = TR.
\end{align*}
Let $Q:=I-T$, then $RQ=QR$ and $RQ^*=Q^*R$. Now $Q$ is invertible since $\|T\|=1/2< 1$ and so following from Lemma 3.3 \cite{Bottcher:2012aa} there exists an orthogonal (unitary) operator $U$ such that
\begin{align*}
Q = U |Q|
\end{align*} 
where $|Q|:=\sqrt{Q^*Q}$, a symmetric (Hermitian) operator. By the square root lemma $R|Q|=|Q|R$ since $R$ commutes with $Q^*Q$. It follows that
\begin{align*}
UR = Q|Q|^{-1}R = QR|Q|^{-1} = RQ|Q|^{-1} = RU.
\end{align*}
Let $S:=|Q|$ then $S$ is self adjoint since,
\begin{align*}
(S^*)^2=S^*S^*=(S^2)^* = (Q^*Q)^* = Q^*Q^{**} = Q^* Q = S^2
\end{align*}
hence $S^*=S$. Now,
\begin{align*}
I-T = US\quad\quad\implies \quad\quad T=I-US.
\end{align*}
We have established that $R$ commutes with $U$ and $S$. Now we want to show that such an $S$ is a linear combination of orthogonal operators that also commute with $R$. We can use use the same argument as in the proof of Lemma \ref{complexcase} to construct such a decomposition for $S$, noting that the construction of Lemma 3.1 in \cite{Bottcher:2012aa} gives a decomposition which is invariant not just under $S$ but under any operator $R$ that commutes with $S$.
\end{proof}

\subsection*{Acknowledgement}

Gratitude is extended to Frank Valckenborgh for his mentorship and guidance during the preparation of the second-named author's master's thesis and the development of this paper.

\pagebreak

\bibliographystyle{amsplain}
\providecommand{\bysame}{\leavevmode\hbox to3em{\hrulefill}\thinspace}
\providecommand{\MR}{\relax\ifhmode\unskip\space\fi MR }
% \MRhref is called by the amsart/book/proc definition of \MR.
\providecommand{\MRhref}[2]{%
  \href{http://www.ams.org/mathscinet-getitem?mr=#1}{#2}
}
\providecommand{\href}[2]{#2}

\end{document}